\newcommand*{\bigchi}{\mbox{\Large$\chi$}}
\newcounter{lemmac}
\newcounter{theoremc}
\newcounter{corollc}
\newcounter{propc}
\newcounter{rmkc}
\newtheorem{theorem}[theoremc]{Theorem}
\newtheorem*{theorem*}{Theorem}
\newtheorem*{defn*}{Definition}
\newtheorem*{prob*}{Problem}
\newtheorem*{lemma*}{Lemma}
\newtheorem*{conj*}{Conjecture}
\newtheorem*{corol*}{Corollary}
\newtheorem*{conclusion*}{Conclusion}
\newtheorem*{proposition*}{Proposition}
\newtheorem{lemma}[lemmac]{Lemma}
\newtheorem{proposition}[propc]{Proposition}
\newtheorem{corollary}[corollc]{Corollary}
\newtheorem{rmk}[rmkc]{\textit{Remark}}
\newtheorem*{rmk*}{\textit{Remark}}
\title[Branching Rules for Classical Groups]{Relative Weyl Character formula, Relative Pieri formulas and Branching rules for Classical groups}
\author{C.S. Rajan}
\address{Ashoka University, Sonipat, Haryana 131029, India}
\email{cs.rajan@ashoka.edu.in}
\author[S.~Shrivastava]{Sagar Shrivastava}
\address{School of mathematics, 
Tata Institute of Fundamental Research, 
Homi Bhabha Road, Mumbai 400005, India}
\email{sagars@math.tifr.res.in}
\begin{document}

\maketitle

\begin{abstract} We give alternate proofs of the classical branching rules for highest weight representations of a complex reductive group $G$ restricted to a closed regular reductive subgroup $H$, where $(G,H)$ consist of the pairs  $(GL(n+1),GL(n))$, $ (Spin(2n+1), Spin(2n)) $ and $(Sp(2n),Sp(2)\times Sp(2n-2))$. Our proof is essentially a long division. The starting point is a relative Weyl character formula and our method is an inductive application of a relative Pieri formula. We also give a proof of the branching rule for the case of $ (Spin(2n), Spin(2n-1))$, by a reduction to the case of $(GL(n),GL(n-1))$.
\end{abstract}
 
\section{Introduction}

Let $G$ be a connected reductive algebraic group over $\mathbb{C}$ and $H$ be a closed, connected reductive subgroup of $G$. In this article, we consider the branching problem, that of understanding the restriction of a finite dimensional rational representation $\pi$ of $G$ to $H$. For a dominant weight $\lambda$ of $G$, let $\pi_{\lambda}$ be a corresponding irreducible representation of $G$ with highest weight $\lambda$ and $\bigchi_{\lambda}$ be its character. By complete reducibility of representations of $H$, 
\begin{equation}\label{eqbrmu}
\bigchi_{\lambda}|_H = \sum_{\mu} m(\lambda,\mu)\bigchi_{\mu} ,
\end{equation}
where $m(\lambda,\mu)$ is the multiplicity of the the irreducible highest weight representation $\pi_{\mu}$ of $H$ (corresponding the the weight $\mu$ of $H$) in $\pi_{\lambda}|_H$. The branching problem is to understand $m(\lambda,\mu)$ as a function of $\lambda$ and $\mu$.

These have a rich and classical history, going back to the work of Herman Weyl \cite{weyl1950theory}, where he considers $G=GL(n+1) $ and $H=GL(n) $.
The starting point for a study of branching rules is the Weyl character formula, which gives a formula for the multiplicity of the weights with respect to a maximal torus $T$ of $G$, occurring in an irreducible representation $\pi$ of $G$. A systematic approach to the branching rules is to make use of Kostant's partition function, which effectively corresponds to inverting the denominator appearing in the Weyl character formula. We refer to the books of \cite{goodman1998} and \cite{knappliegroups2002}, for a more detailed exposition of the classical branching rules.

In this paper, we modify this approach. 
As an example, let us consider the branching from $GL(n+1)$ to $GL(n)$. A highest weight $\lambda$ for $GL(n+1)$ is given by an $(n+1)$-tuple $\lambda= (\lambda_1, \lambda_2  \ldots, \lambda_{n+1}) \in \mathbb{Z}^{n+1}$ with $\lambda_1 \geq \lambda_2 \geq \ldots \geq \lambda_{n+1} $. Weyl's branching 
formula  is the following theorem:
\begin{theorem} \label{theorem:branching-gln}
   Let $\lambda$ be a dominant weight of $G=GL(n+1) $ and $\mu$ a dominant weight of $H=GL(n) $. Then $m(\lambda,\mu) =1$ if  $\mu$ interlaces $\lambda$ ($\mu \preceq \lambda$) i.e. $\lambda_1 \geq \mu_1 \geq \lambda_2 \geq \ldots \geq \mu_{n} \geq \lambda_{n+1}$, and zero otherwise.
\end{theorem}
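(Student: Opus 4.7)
Since $H=GL(n)$ embeds in $G=GL(n+1)$ as the block-diagonal stabiliser of the last standard basis vector, restriction of characters corresponds to the torus specialisation $x_{n+1}=1$. My plan is to carry out this specialisation inside the Weyl character formula for $G$, rewrite the resulting rational expression via column operations to produce a relative Weyl character formula, and then extract the $H$-characters by a determinantal ``long division.''

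Writing $\bigchi_\lambda=a_\lambda/a_\emptyset$ with $a_\lambda=\det\bigl(x_i^{\lambda_j+n+1-j}\bigr)_{i,j=1}^{n+1}$, the specialisation $x_{n+1}=1$ factors the denominator as
\[
a_\emptyset(x_1,\dots,x_n,1)\;=\;V(x_1,\dots,x_n)\prod_{i=1}^{n}(x_i-1),
\]
where $V$ is the Vandermonde, i.e.\ the $GL(n)$-Weyl denominator. In the numerator the last row becomes all ones; subtracting column $j+1$ from column $j$ makes that row vanish except in the final entry, and cofactor expansion reduces $a_\lambda(x_1,\dots,x_n,1)$ to an $n\times n$ determinant with entries $x_i^{\lambda_j+n+1-j}-x_i^{\lambda_{j+1}+n-j}$. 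The telescoping identity
\[
x_i^{\lambda_j+n+1-j}-x_i^{\lambda_{j+1}+n-j}\;=\;(x_i-1)\sum_{\mu_j=\lambda_{j+1}}^{\lambda_j}x_i^{\mu_j+n-j}
\]
(valid because $\lambda_j\geq\lambda_{j+1}$), combined with multilinearity of the determinant, then yields
\[
a_\lambda(x_1,\dots,x_n,1)\;=\;\prod_{i=1}^n(x_i-1)\sum_{\mu_j\in[\lambda_{j+1},\lambda_j]}a_\mu(x_1,\dots,x_n).
\]
The constraint $\mu_j\in[\lambda_{j+1},\lambda_j]$ automatically forces $\mu_j\geq\lambda_{j+1}\geq\mu_{j+1}$, so each tuple $\mu$ in the sum is itself dominant and the corresponding $H$-alternant $a_\mu$ is nonzero and distinct.

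Dividing both sides by $a_\emptyset(x_1,\dots,x_n,1)$ cancels the factors $\prod_i(x_i-1)$ and $V$, giving
\[
\bigchi_\lambda|_H\;=\;\sum_{\mu\preceq\lambda}\bigchi_\mu,
\]
i.e.\ the theorem with every multiplicity equal to one. The main obstacle is the combinatorial-determinantal bookkeeping in the long-division step: one must verify that the telescoping and multilinear expansion produce each interlacing $\mu$ exactly once with coefficient $+1$ and that no non-interlacing $\mu$ survives. This is the simplest instance of the relative Pieri framework the paper builds; for the orthogonal and symplectic branchings the analogous division will need to be iterated, but for $(GL(n+1),GL(n))$ a single application of the identity suffices.
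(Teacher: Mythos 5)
Your proposal is correct, but it takes a genuinely different route from the paper. You restrict all the way to $H=GL(n)$ by specialising $x_{n+1}=1$ and then prove the branching identity in one shot: the specialised Weyl denominator factors as $V(x_1,\dots,x_n)\prod_i(x_i-1)$, the column subtractions and cofactor expansion reduce the numerator to an $n\times n$ alternant, and the telescoping identity $x^a-x^b=(x-1)\sum_{c=b}^{a-1}x^c$ together with multilinearity in the columns produces exactly one alternant $a_\mu$ for each interlacing $\mu$ (and the interlacing condition automatically makes each such $\mu$ dominant, so no cancellation or coincidence of alternants can occur, since alternants attached to distinct dominant weights are linearly independent). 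Dividing by the specialised denominator gives $\chi_\lambda|_H=\sum_{\mu\preceq\lambda}\chi_\mu$ directly, with no induction; this is essentially the classical determinantal proof, and it is perfectly adequate for the theorem as stated (the $GL(1)$-weight, if wanted, is recoverable from homogeneity as $|\lambda|-|\mu|$). The paper deliberately avoids this specialisation: it keeps $t=x_{n+1}$ as a genuine $GL(1)$ variable, writes a relative Weyl character formula for $(GL(n+1),GL(n)\times GL(1))$, proves a relative Pieri formula for the product of $\chi_\nu$ with the relative Weyl denominator $\prod_i(x_i-t)$, and then extracts the multiplicities by an inductive ``long division'' with the counting Lemmas 1--3 and Corollary 2. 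What that heavier machinery buys is uniformity: the same schema (relative Weyl character formula, relative Pieri, long division) is what the paper then runs for $(Spin(2n+1),Spin(2n))$ and $(Sp(2n),Sp(2)\times Sp(2n-2))$, which are equal-rank pairs where $H$ and $G$ share a maximal torus and no specialisation of a torus variable is available, so your one-step argument has no analogue there. In short: your proof is simpler and correct for this pair, but it is special to $(GL(n+1),GL(n))$, whereas the paper's longer proof is a template for the remaining cases.
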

Our starting point for the proof of the foregoing theorem is to consider a relative Weyl character formula. 
We take for $T$, the maximal torus consisting of diagonal matrices $g$ with diagonal entries $x_1,x_2,\ldots,x_{n+1}$. The Schur-Weyl character formula corresponding to the highest weight $\lambda$, gives the formula for the character $\bigchi_{\lambda}$ restricted to $T$:
\begin{equation}\label{eq:weylchformulagldet}
\bigchi_{\lambda}(g) = \frac{\det |x^{\lambda_i+n+1-i}_j|}{\det |x^{n+1-i}_j|}. 
\end{equation}

The relative Weyl character formula we consider for the branching rules for $(G,H) = (GL(n+1) ,GL(n) \times GL(1) )$  is the co-factor expansion of the above determinantal formula for the character, leading to a rational expression in one variable with coefficients characters of $GL(n)$. Such a co-factor expansion of the Schur-Weyl character formula was used in \cite[4.1]{rajan2014}. For $\mu = (\mu_1, \mu_2, \ldots, \mu_{n+1}) \in \mathbb{Z}^{n+1}$, where $ \mu_1 > \mu_2 > \cdots > \mu_{n+1}$, let 
$$S(\mu) = \det |x^{\mu_i}_j| .$$
With this notation,  $\bigchi_{\lambda} = S(\lambda +\rho_{n+1})/S(\rho_{n+1})$, where $\rho_{n+1} = (n,n-1,\ldots,0)$. The co-factor expansion for the determinant $S(\mu)$  is given by:
$$(-1)^{n+1}S(\mu) = x_{n+1}^{\mu_{n+1}} S(\mu^{(n+1)})  - x_{n+1}^{\mu_{n}} S(\mu^{(n)})  + \ldots + (-1)^{n} x_{n+1}^{\mu_1} S(\mu^{(1)}) ,  $$
where $\mu^{(i)} = (\mu_1+1,\mu_2+1, \ldots, \mu_{i-1}+1, \mu_{i+1}, \ldots, \mu_{n+1})$ (by an abuse of notation, we use $S(\mu)$ in $n$ variables as well). Upon substituting $t=x_{n+1}$, the Schur-Weyl character formula becomes

$$\bigchi_{\lambda} = \frac{t^{\lambda_{n+1}} S(\lambda^{(n+1)} + \rho_{n}) - t^{\lambda_{n}+1} S(\lambda^{(n)} + \rho_{n}) + \ldots + (-1)^{n} t^{\lambda_{1}+n} S(\lambda^{(1)} + \rho_{n})}{S(\omega_{n} + \rho_{n}) - t S(\omega_{n-1} +\rho_{n}) + \ldots + (-1)^{n} t^{n} S(\rho_{n}) },$$
where $\omega_k=(1,1,1,\ldots,1,0,\ldots ,0)$ ($k$ many 1's) corresponds to the $k^{th}$ fundamental weight for $GL(n)$. In the above expression, we can divide the numerator and denominator by the Weyl denominator $S(\rho_{n})$ of $GL(n)$, to get

\begin{equation}\label{eq:relweylchformulagln1}
\bigchi_{\lambda}= \frac{t^{\lambda_{n+1}} \bigchi_{\lambda^{(n+1)}}  + \ldots + (-1)^{n} t^{\lambda_{1}+n} \bigchi_{\lambda^{(1)}}}{ \bigchi_{\omega_{n}} + \ldots + (-1)^{n} t^{n} }.
\end{equation}

The above expression is the relative Weyl character formula for $(GL(n+1), GL(n)\times GL(1))$, where we express the character $\bigchi_{\lambda}$, as a rational expression in terms of  the irreducible characters of $GL(n)\times GL(1)$. The denominator (resp. numerator) in the relative Weyl character formula would be called the relative Weyl denominator (resp. numerator) for the pair $(GL(n+1),GL(n) \times GL(1))$.

In order to obtain the branching laws, we essentially carry out a long division of the relative Weyl character formula. For this, we need to understand a relative Pieri formula (see Proposition \ref{RelativePieri:GLn}), i.e., to understand the tensor product decomposition of an irreducible representation of $GL(n) $, with the relative Weyl denominator. In this case, this amounts to understanding the tensor product decomposition of an irreducible representation of $GL(n)$ with all of the fundamental representations, which are exterior powers of the standard representation of $GL(n)$. 

In order to carry out the long division, the relative Pieri is used in an inductive manner. This leads to some  combinatorial identities which then yield the branching laws in an inductive manner. We refer to Section 2 for the details of the proof.

\subsection{Classical branching: regular case.} In the case of orthogonal groups, the branching rules for $(Spin(n+1),Spin(n))$ were first proved by F.D.Murnaghan \cite[Ch-IX]{murnaghan1938} (see also \cite{zelo1973}). We recall the branching rule 
when $H$ is regular in $G$, i.e., when the ranks of $G$ and $H$ are equal:

\begin{theorem} \label{thmbranchingbn}\label{theorem:branching-orth-reg}
   Let $\lambda$ be a dominant weight of $G=Spin(2n+1)$ and $\mu$ is a dominant weight of $H=Spin(2n)$. Then $m(\lambda,\mu) =1$ if $\mu$ interlaces $\lambda$ ($\mu \preceq \lambda$) i.e. $\lambda_1 \geq \mu_1 \geq \lambda_2 \geq \ldots \geq \mu_{n-1} \geq \lambda_n \geq |\mu_n|$, and zero otherwise.
\end{theorem}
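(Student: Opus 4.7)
The plan is to mimic the paradigm used above for $(GL(n+1),GL(n))$: establish a relative Weyl character formula for $(Spin(2n+1),Spin(2n))$, extract a relative Pieri formula, and carry out the resulting long division. Since $B_n$ and $D_n$ share the same rank, the role played in the $GL$ case by the extra torus variable $t$ is now played by the two half-spin weights $\omega^\pm = (1/2,\ldots,1/2,\pm 1/2)$ of $Spin(2n)$.

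From the coset decomposition $W_B = W_D \sqcup s W_D$, where $s$ is the sign change on the last coordinate, the $B_n$- and $D_n$-Weyl antisymmetrizations $A^B_\nu$ and $A^D_\nu = \sum_{w \in W_D}(-1)^w x^{w\nu}$ satisfy $A^B_\nu = A^D_\nu - A^D_{s\nu}$. Together with the identifications $\omega^+ + \rho_D = \rho_B$ and $\omega^- + \rho_D = s\rho_B$ (so that $A^B_{\rho_B}/A^D_{\rho_D} = \prod_i(x_i^{1/2}-x_i^{-1/2}) = \bigchi_{\omega^+}-\bigchi_{\omega^-}$), division by $A^B_{\rho_B}$ yields the relative Weyl character formula
\[
 \bigchi_\lambda \bigl(\bigchi_{\omega^+} - \bigchi_{\omega^-}\bigr) = \bigchi_{\lambda+\omega^+} - \bigchi_{\tilde\lambda+\omega^-}, \qquad \tilde\lambda = (\lambda_1,\ldots,\lambda_{n-1},-\lambda_n),
\]
where $\bigchi_\lambda$ is the $B_n$-character and the remaining $\bigchi$'s are $D_n$-characters. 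A direct change-of-variables argument using the $W_D$-invariance of $\prod_i(x_i^{1/2}-x_i^{-1/2})$ then gives the relative Pieri formula
\[
 A^D_\nu \prod_i(x_i^{1/2}-x_i^{-1/2}) = \sum_{\epsilon \in \{\pm 1\}^n}\Bigl(\prod_i \epsilon_i\Bigr) A^D_{\nu+\epsilon/2}.
\]

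Substituting the conjectured $\bigchi_\lambda = \sum_{\mu \preceq \lambda}\bigchi_\mu$ and applying the relative Pieri reduces the theorem to the identity
\[
 \sum_{\mu \preceq \lambda}\sum_{\epsilon}\Bigl(\prod_i \epsilon_i\Bigr) A^D_{\mu+\rho_D+\epsilon/2} = A^D_{\lambda+\rho_B} - A^D_{s(\lambda+\rho_B)}.
\]
Grouping the left hand side by the weight $\nu = \mu+\rho_D+\epsilon/2$: for each coordinate $i<n$ with $\mu_i$ in the interior of the interlacing range $[\lambda_{i+1},\lambda_i]$, the two preimages $(\mu_i,+1)$ and $(\mu_i+1,-1)$ carry opposite signs and cancel, so only the boundary values $\nu_i \in \{(\lambda+\rho_B)_i, (\lambda+\rho_B)_{i+1}\}$ survive, and similarly $\nu_n \in \{\pm (\lambda+\rho_B)_n\}$ at the last coordinate. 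This leaves a signed sum over $2^n$ binary configurations $c \in \{0,1\}^n$, one upper/lower choice per coordinate. Since $\lambda+\rho_B$ is strictly decreasing with positive last entry, every configuration other than $c = (0,\ldots,0)$ (giving $\nu = \lambda+\rho_B$) and $c = (0,\ldots,0,1)$ (giving $\nu = s(\lambda+\rho_B)$) produces either a coincidence $\nu_i = \nu_{i+1}$ (whenever $c_i = 1$, $c_{i+1} = 0$ for some $i<n$) or $\nu_{n-1} = -\nu_n$ (when $c_{n-1} = c_n = 1$); both are $D_n$-walls where $A^D_\nu = 0$. The two surviving terms reproduce the right hand side.

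The main obstacle will be the telescoping step, particularly verifying the cancellations and wall-hitting claims uniformly, including the degenerate cases $\lambda_i = \lambda_{i+1}$ or $\lambda_n = 0$ where the interlacing interval collapses: in those cases the pairing $(\mu_i,+1)\leftrightarrow(\mu_i+1,-1)$ is empty, but the two boundary $\nu_i$ values remain $(\lambda+\rho_B)_i$ and $(\lambda+\rho_B)_{i+1}$ (both coming from the forced $\mu_i$ with $\epsilon_i = \pm 1$), so the $2^n$-enumeration and wall argument carry over unchanged.
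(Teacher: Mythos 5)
Your proposal derives the same relative Weyl character formula (Proposition \ref{relativeweylchbn}) and the same relative Pieri formula (Proposition \ref{relpierispin}) as the paper, but then diverges in the final step. The paper determines the multiplicities $m(\lambda,\mu)$ one by one via a decreasing induction on the lexicographic order, with the combinatorics (Lemmas \ref{lemma_spinalmostinterlace}--\ref{lemma_spinmultiplicitycount} and Corollary \ref{corol_spinmultiplicities}) organized around weak interlacing, the sets $M,M'$, and alternating binomial sums. You instead substitute the conjectured answer $\sum_{\mu\preceq\lambda}\bigchi_\mu$, clear denominators, and verify the resulting identity of $D_n$-antisymmetrizations directly by a coordinate-wise telescoping: for each $i<n$ the interior values of $\nu_i$ pair off with opposite signs, only $\nu_i\in\{(\lambda+\rho_B)_i,(\lambda+\rho_B)_{i+1}\}$ survive (and $\nu_n\in\{\pm(\lambda+\rho_B)_n\}$ at the last coordinate), and all but the two configurations $c=0$ and $c=(0,\ldots,0,1)$ land on a $D_n$-wall $\nu_i=\pm\nu_j$. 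I checked the sign bookkeeping, the wall enumeration, and the degenerate cases $\lambda_i=\lambda_{i+1}$ and $\lambda_n=0$; the argument is sound. (One small point you leave implicit: since $\lambda+\rho_B$ is strictly decreasing with positive entries, non-adjacent wall coincidences $\nu_i=\pm\nu_j$ with $|i-j|>1$ cannot occur for the surviving boundary configurations, so checking adjacent pairs suffices; and the conclusion follows because the ring of $W_D$-invariant Laurent polynomials is a domain, so dividing by the nonzero relative Weyl denominator is legitimate.) Your route is shorter and more transparent for this multiplicity-free case, at the cost of being a verification rather than a derivation — you must guess the answer first. The paper's long-division scheme is heavier here but is chosen for uniformity: the same induction-on-lexicographic-order framework carries over almost verbatim to the symplectic pair $(Sp(2n),Sp(2)\times Sp(2n-2))$, where the multiplicities are $SL_2$-representations and a direct telescoping of the conjectured answer would be considerably more delicate.
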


Branching rules in the symplectic case for $(Sp(2n),Sp(2)\times Sp(2n-2))$  have a fairly long history (see  \cite{zelo1973}, \cite{hegerfeldt1967}, \cite{miller1966branching} and  \cite{lepowsky1970representations}). A description of the multiplicities in terms of $SL_2$ representations was given by N.R.Wallach  and O. Yacobi \cite{wallach2009multiplicity}.
\begin{theorem} \label{thmbranchingcn}\label{theorem:branching-symplectic}
Let $G= Sp(2n)$ and $H= Sp(2)\times Sp(2n-2)$. Let $\lambda$ be a dominant weight for $Sp(2n) $ and  $\mu$ is a dominant weight of $Sp(2n-2) $. Denote by $S^{(k)}$ the $(k+1)$-dimensional irreducible representation of $SL_2$ (isomorphic to $Sp(2)$). Let
$$\bigchi_{\lambda}|_{H} = \sum_{\mu} \sum_k m(\lambda, \mu, k) S^{(k)} \bigchi_{\mu}.$$
Then representation $V(\lambda, \mu) :=\sum_k m(\lambda, \mu, k) S^{(k)} $ of $SL_2$ is non zero if and only if 
$$\lambda_j \geq \mu_j \geq \lambda_{j+2} ,\, \, \text{ for } 1 \leq j \leq n-1$$
(here $\lambda_{n+1}=0$). When the inequalities are satisfied, let 
$$x_1 \geq y_1 \geq x_2 \geq y_2 \geq \cdots \geq x_n \geq y_n,$$
be the non decreasing rearrangement of $\{\lambda_1, \ldots, \lambda_n, \mu_1, \ldots, \mu_{n-1},0 \}$. Then as $SL_2$-modules,
$$V(\lambda, \mu) = \bigotimes_{i=1}^n S^{(x_i -y_i)}.$$
\end{theorem}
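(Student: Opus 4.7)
The plan is to mirror the strategy used for $(GL(n+1),GL(n))$: derive a relative Weyl character formula for $(Sp(2n),\,Sp(2)\times Sp(2n-2))$, establish a relative Pieri formula, and then carry out long division. Take the maximal torus of $Sp(2n)$ with coordinates $(x_1,\dots,x_n)$ and set $t := x_n$ to be the torus variable of the $Sp(2)$ factor. Cofactor-expanding both the numerator and denominator of the $Sp(2n)$ Weyl character formula along the $x_n$-column, and then dividing through by the $Sp(2n-2)$ Weyl denominator in $(x_1,\dots,x_{n-1})$, yields
\begin{equation*}
\bigchi_\lambda \;=\; \frac{\sum_{i=1}^{n}(-1)^{n-i}\bigl(t^{\lambda_i+n-i+1}-t^{-(\lambda_i+n-i+1)}\bigr)\,\bigchi^{Sp(2n-2)}_{\lambda^{(i)}}}{\sum_{i=1}^{n}(-1)^{n-i}\bigl(t^{n-i+1}-t^{-(n-i+1)}\bigr)\,\bigchi^{Sp(2n-2)}_{\omega_{i-1}}},
\end{equation*}
where $\lambda^{(i)}=(\lambda_1+1,\dots,\lambda_{i-1}+1,\lambda_{i+1},\dots,\lambda_n)$ and $\omega_{i-1}$ is the $(i-1)$-th fundamental weight of $Sp(2n-2)$ (with $\omega_0=0$).

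Next, I would prove a relative Pieri formula describing, for any irreducible $Sp(2n-2)$-character $\bigchi^{Sp(2n-2)}_\mu$, the product of $\bigchi^{Sp(2n-2)}_\mu$ with the relative Weyl denominator above as a signed combination of $Sp(2n-2)$-characters weighted by Laurent polynomials in $t$. Grouped by powers of $t$, this reduces to the classical symplectic Pieri rule for tensoring with fundamental characters; because the $\omega_j$ of $Sp(2n-2)$ are not themselves exterior powers of the standard representation but rather alternating sums of them, I would first rewrite the denominator in terms of wedge-power characters of the standard representation and then apply Pieri. With the relative Pieri formula in hand, the long division of the relative Weyl numerator by the relative Weyl denominator proceeds inductively: at each stage one strips off the leading $\bigchi^{Sp(2n-2)}_\mu$ paired with an appropriate Laurent polynomial $V(\lambda,\mu)(t)$, and the relative Pieri rule certifies that the remainder has strictly smaller $t$-degree. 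The procedure terminates exactly when every $\mu$ outside the range $\lambda_j \geq \mu_j \geq \lambda_{j+2}$ has been eliminated, yielding an expansion $\bigchi_\lambda = \sum_\mu V(\lambda,\mu)(t)\bigchi^{Sp(2n-2)}_\mu$ with $V(\lambda,\mu)(t)$ a non-negative combination of $SL_2$-Weyl characters $S^{(k)}(t)$.

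The main obstacle is the last step: identifying $V(\lambda,\mu)$ with the tensor product $\bigotimes_{i=1}^n S^{(x_i-y_i)}$ indexed by the paired interlaced sequence $x_1 \geq y_1 \geq \cdots \geq x_n \geq y_n$ obtained by sorting $\{\lambda_1,\dots,\lambda_n,\mu_1,\dots,\mu_{n-1},0\}$. The long division expresses $V(\lambda,\mu)(t)$ as a signed sum indexed by which $\lambda^{(i)}$ get absorbed into $\mu$, whereas the theorem predicts a product of $SL_2$-Weyl characters $\bigl(t^{x_i-y_i+1}-t^{-(x_i-y_i+1)}\bigr)/(t-t^{-1})$. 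I would attack this combinatorial identity by induction on $n$, peeling off the extremal pair $(x_1,y_1)$ to reduce to a smaller symplectic instance; the base case $n=1$ collapses to $\bigchi^{Sp(2)}_{\lambda_1}=S^{(\lambda_1)}$.
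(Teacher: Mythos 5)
Your first two steps (the relative Weyl character formula by cofactor expansion, and a relative Pieri formula for the relative Weyl denominator) are in the spirit of the paper, though you propose to obtain the Pieri rule by regrouping the denominator into wedge powers of the standard representation and invoking the classical symplectic Pieri rules for each graded piece; the paper deliberately avoids this and instead, following Okada, multiplies the determinantal Weyl numerator by the product form $\Delta=(-x_1)^{-(n-1)}\prod_{i\geq 2}(x_1-x_i)(x_1-x_i^{-1})$ all at once, which yields directly
$\Delta\,(S^{(k)}\bigchi_{\eta})=S^{(k)}\sum_{\nu-\eta\in\{0,\pm1\}^{n-1}}(-S^{(1)})^{\,n-1-|\nu-\eta|}\bigchi_{\nu}$
with no need for the (considerably subtler) symplectic wedge-power Pieri rules. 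That difference is acceptable in principle, but your route would require proving and then resumming those individual rules, which is a substantial amount of unaddressed work.

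The genuine gap is in the ``long division'' and the identification of $V(\lambda,\mu)$, which is the actual content of the theorem and which you yourself flag as the main obstacle without supplying an argument. Your descent claim --- that each division step strips off a leading term and leaves a remainder of ``strictly smaller $t$-degree'' --- does not work as stated: the coefficients are virtual $SL_2$-characters (Laurent polynomials symmetric under $t\mapsto t^{-1}$), and multiplying by the denominator, whose terms involve $S^{(i)}=\frac{t^{i+1}-t^{-i-1}}{t-t^{-1}}$, moves degrees both up and down, so there is no simple monotone quantity; this is exactly the failure of $t^kt^l=t^{k+l}$ that the paper points out. The paper replaces it by a decreasing induction on $\mu$ in the lexicographic order, comparing the coefficient of $\bigchi_{\mu+(1,\dots,1)}$ on both sides, and the multiplicity formula is then forced by cancellative $SL_2$ identities coming from Clebsch--Gordan, chiefly
$m(\lambda,\mu+\varepsilon)+m(\lambda,\mu+\varepsilon+2e_j)-S^{(1)}m(\lambda,\mu+\varepsilon+e_j)=0$,
together with a weak double-interlacing lemma and a three-case analysis (generic, boundary, and the Weyl-numerator weights $\mu=\lambda^{(k)}-(1,\dots,1)$, the last needing the separate identity $S^{(a-b)}S^{(b-1)}-S^{(1)}S^{(a-b)}S^{(b)}+S^{(a-b-1)}S^{(b)}=-S^{(a+1)}$). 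Your alternative plan --- induction on $n$ by ``peeling off the extremal pair $(x_1,y_1)$'' --- is only a hope: removing $x_1,y_1$ does not transform the remaining data into a branching problem for $Sp(2n-2)\downarrow Sp(2)\times Sp(2n-4)$ in any evident way (the interlaced list mixes $\lambda$'s and $\mu$'s, and the recursion would have to be compatible with the relative Weyl numerator, where the boundary weights $\lambda^{(k)}$ behave differently). As written, neither the non-vanishing criterion $\lambda_j\geq\mu_j\geq\lambda_{j+2}$ nor the product formula $V(\lambda,\mu)=\bigotimes_i S^{(x_i-y_i)}$ is actually proved.
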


In both the foregoing theorems, the proof proceeds in a manner similar to that of $GL(n)$.  A relative Weyl character formula can be obtained directly from the determinantal versions of the Weyl character formula. More generally, in the case when $H$ is regular in $G$, a relative Weyl character formula has been obtained in \cite{gross1998weyl} (see Section 3). When $H$ is the Levi component of a parabolic, a relative Weyl character formula is given in \cite{kostant61}.

To obtain relative Pieri formulas, we follow a method of \cite{Okada16}. In this, rather than working out the individual tensor product decompositions of an irreducible representation of $H$ with that of a representation `occurring' in the relative Weyl denominator, we use the determinental expression for the Weyl denominator to derive a `weaker'  relative Pieri formula, that of decomposing the tensor product of an irreducible representation of $H$ with the full virtual representation given by the relative Weyl denominator. 

From the relative Pieri formula, following the process of long division, the branching rules are derived in an inductive manner. We first observe that there is a unique term in the relative Weyl denominator, which is the largest (or the smallest) with respect to a suitable ordering (for example, either the degree in the case of $GL(n+1)$, or in general the lexicographic ordering). Upon cross multiplying the restriction of the character $\bigchi_{\lambda}$ to $H$ by the relative Weyl denominator and making use of the relative Pieri formula, we get an expression for the multiplicity with which a dominant weight $\mu$ of $H$ occurs in $\bigchi_{\lambda}$,  in terms of multiplicities of dominant weights which are larger (smaller) in the ordering. By an induction hypothesis, these multiplicities are as expected. This leads to a combintorial expression, which can be solved to proceed inductively with a proof of the branching formula. This is essentially, the process of long division. 

The proof proceeds by first establishing a weak  interlacing property. This property limits the dominant weights $\mu$ of $H$ to be considered to be amongst the expected weights occurring in the branching formula, together   with the weights occurring in the relative  Weyl numerator and some boundary cases. A further analysis of these three cases, yields the proof of the branching laws. We would like to reiterate that other than the determinantal formulas for Weyl character formula, everything else in this paper is self contained.

We will prove Theorem \ref{theorem:branching-orth-reg} in Section 4 and Theorem \ref{theorem:branching-symplectic} in Section 6.

\subsection{Classical branching: non-regular case.} For the non-regular case of $(Spin(2n),Spin(2n-1))$, we do not have at our disposal a relative Weyl character formula. Instead we express the  Weyl character formula as a formal sum of Weyl character type formula for $GL(n) $, and then apply branching rules for $(GL(n) ,GL(n-1) )$  to get the desired branching rules for $(Spin(2n),Spin(2n-1))$. We give a proof of the 
branching theorem in Section 5:
\begin{theorem} \label{thmbranchingdn}
   Let $\lambda$ be a dominant weight of $G=Spin(2n)$ and $\mu$ is a dominant weight of $H=Spin(2n-1)$. Then $m(\lambda,\mu) =1$ if $\mu$ interlaces $\lambda$ ($\mu \preceq \lambda$) i.e. $\lambda_1 \geq \mu_1 \geq \lambda_2 \geq \ldots \geq  \mu_{n-1} \geq |\lambda_n| $, and zero otherwise.
\end{theorem}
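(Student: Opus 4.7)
The plan is to reduce the branching rule to the already-proven case of $(GL(n), GL(n-1))$ (Theorem \ref{theorem:branching-gln}). I would fix maximal tori so that restriction from $Spin(2n)$ to $Spin(2n-1)$ corresponds to the specialization $x_n = 1$ in characters viewed as Laurent polynomials in $x_1, \ldots, x_n$.

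First, I would rewrite the Weyl character formula for $\bigchi_\lambda$ as a formal sum of $GL(n)$-Weyl-character-type expressions. Setting $l_i = \lambda_i + n - i$, Weyl's formula for $D_n$ reads
\begin{equation*}
\bigchi_\lambda \;=\; \frac{\sum_{\epsilon \in \{\pm 1\}^n,\; \prod_i \epsilon_i = +1} \det(x_j^{\epsilon_i l_i})_{i,j=1}^n}{\sum_{\epsilon,\; \prod_i \epsilon_i = +1} \det(x_j^{\epsilon_i (n-i)})_{i,j=1}^n}.
\end{equation*}
Formally dividing numerator and denominator by the $GL(n)$-Weyl denominator $\det(x_j^{n-i})$, each $\epsilon$-summand becomes, after sorting the exponents and tracking the resulting permutation sign, a signed $GL(n)$ Schur polynomial in the variables $(x_1^{\eta_1}, \ldots, x_n^{\eta_n})$ for appropriate $\eta_i \in \{\pm 1\}$. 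The analogous expansion for $\bigchi_\mu$ uses the $B_{n-1}$-Weyl-group sign character $(\sigma, \epsilon') \mapsto \operatorname{sgn}(\sigma) \prod_i \epsilon'_i$, giving a signed sum over all $\epsilon' \in \{\pm 1\}^{n-1}$ with no parity constraint.

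Next, I would set $x_n = 1$ and apply Theorem \ref{theorem:branching-gln} in the form $s_\nu(x_1, \ldots, x_{n-1}, 1) = \sum_{\kappa \preceq \nu} s_\kappa(x_1, \ldots, x_{n-1})$. Applied to every $\epsilon$-summand (using the $x \mapsto x^{-1}$ involution on the $\epsilon_n = -1$ contributions), this converts $\bigchi_\lambda|_{x_n = 1}$ into a signed sum of $GL(n-1)$ Schur polynomials. The key structural observation is that the even-parity constraint on $\epsilon$ pairs each $\epsilon_n = +1$ term with an $\epsilon_n = -1$ term carrying the opposite sign convention, and upon specialization the two halves combine to reproduce precisely the parity-free expansion of the target $B_{n-1}$ character, with $\lambda$ and its sign-flipped counterpart $\lambda' = (\lambda_1, \ldots, \lambda_{n-1}, -\lambda_n)$ playing symmetric roles.

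The main obstacle will be the combinatorial bookkeeping: organizing the sign contributions, tracking the permutations that resort the exponents $\epsilon_i l_i$ into a decreasing sequence, and composing these with the interlacings supplied by Theorem \ref{theorem:branching-gln}. Once the signs are arranged, linear independence of the $GL(n-1)$ Schur polynomials in $x_1, \ldots, x_{n-1}$ should force equality of coefficients on both sides, producing the interlacing conditions $\lambda_i \geq \mu_i \geq \lambda_{i+1}$ for $1 \leq i \leq n-2$ and the boundary condition $\mu_{n-1} \geq |\lambda_n|$, the absolute value arising from the merging of the $\lambda$ and $\lambda'$ contributions, all with multiplicity one.
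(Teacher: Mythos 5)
Your proposal follows essentially the same route as the paper's proof: expand the $D_n$ Weyl character over the even sign-change group $\{\varepsilon\in\{\pm1\}^n:\prod_i\varepsilon_i=1\}$ into $GL(n)$-type determinants, specialize $x_n=1$, apply the $(GL(n),GL(n-1))$ branching rule as a polynomial identity (in sign-twisted, possibly half-integral variables), and reassemble the resulting signed sum over $\{\pm1\}^{n-1}$ into $Spin(2n-1)$ characters. The bookkeeping you defer is exactly what the paper carries out via the substitution $y_i=x_i^{\varepsilon_i}$, the isomorphism between even sign changes in $n$ variables and all sign changes in $n-1$ variables, and the normalization $\lambda_n\geq 0$ by the outer automorphism.
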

We remark that the known proofs of the foregoing theorem (\cite{goodman1998})
also make use of the corresponding proof of branching for $GL(n)$.

\section{Branching rules for $(GL(n+1) , GL(n) )$}
In this Section, we give a proof of Theorem \ref{theorem:branching-gln}, following the schema given in the introduction.  The starting point is  the relative Weyl character formula for $(GL(n+1),GL(n))$  given by equation (\ref{eq:relweylchformulagln1}).

\begin{proposition}{(Relative Weyl character formula)}
Let $\lambda = (\lambda_1, \lambda_2, \ldots, \lambda_{n+1})$ be a dominant weight for $G=GL(n+1)$ and $H= GL(1)\times GL(n)$. Let $\bigchi_{\lambda}$ be the irreducible highest weight representation of $G$ with highest weight $\lambda$. We have the following expression for restriction of $\bigchi_{\lambda}$:
\begin{equation}\label{eq:relweylchformulagln}
\bigchi_{\lambda}|_{H}= \frac{t^{\lambda_{n+1}} \bigchi_{\lambda^{(n+1)}}  + \ldots + (-1)^{n} t^{\lambda_{1}+n} \bigchi_{\lambda^{(1)}}}{ \bigchi_{\omega_{n}} + \ldots + (-1)^{n} t^{n} }.
\end{equation}
where 
\begin{itemize}
    \item $t$ is the character of $GL(1)$,
    \item $\omega_k=(1,1,1,\ldots,1,0,\ldots ,0)$ ($k$ many 1's) corresponds to the $k^{th}$ fundamental weight for $GL(n)$, 
    \item $\lambda^{(i)} = (\lambda_1+1,\lambda_2+1, \ldots, \lambda_{i-1}+1, \lambda_{i+1}, \ldots, \lambda_{n+1})$ are dominant weights of $GL(n)$.
\end{itemize}
\end{proposition}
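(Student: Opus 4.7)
The plan is to derive the formula by cofactor expansion of the Schur--Weyl determinantal character formula for $GL(n+1)$ along the variable $x_{n+1}$, which will play the role of the $GL(1)$-character $t$. Recall the notation $S(\mu) = \det|x_j^{\mu_i}|$ for a strictly decreasing $(n+1)$-tuple $\mu$; the Weyl character formula reads $\bigchi_{\lambda} = S(\lambda + \rho_{n+1})/S(\rho_{n+1})$ with $\rho_{n+1} = (n, n-1, \ldots, 0)$. My goal is to peel off the $x_{n+1}$-dependence from both determinants simultaneously.

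First I would set $t = x_{n+1}$ and expand both $S(\lambda + \rho_{n+1})$ and $S(\rho_{n+1})$ by cofactors along the column corresponding to $x_{n+1}$. Each determinant then becomes an alternating sum of the form $\sum_{i=1}^{n+1}(-1)^{i+n+1}\, t^{a_i}\, M_i$, where $M_i$ is the $n\times n$ minor obtained by deleting the $i$-th row and the $x_{n+1}$-column, hence is itself a determinant of the form $\det|x_j^{b_k}|_{1\le j,k\le n}$ in $x_1,\dots,x_n$.

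Next I would identify these $n\times n$ minors as $GL(n)$-type $S$-functions. Deleting the $i$-th entry from the strictly decreasing sequence $\lambda+\rho_{n+1} = (\lambda_1+n,\lambda_2+n-1,\ldots,\lambda_{n+1})$, a direct index check shows that the resulting $n$-tuple equals $\lambda^{(i)}+\rho_n$, where $\rho_n=(n-1,n-2,\ldots,0)$ and $\lambda^{(i)}=(\lambda_1+1,\ldots,\lambda_{i-1}+1,\lambda_{i+1},\ldots,\lambda_{n+1})$ as in the statement; the dominance of $\lambda^{(i)}$ as a weight of $GL(n)$ follows immediately from that of $\lambda$. An entirely analogous computation for $\rho_{n+1}$ shows that deleting its $i$-th entry yields $\omega_{i-1}+\rho_n$, since the first $i-1$ remaining entries each drop by $1$ relative to $\rho_n$ and the last $n-i+1$ entries coincide with those of $\rho_n$.

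Finally, I divide the resulting numerator and denominator by the $GL(n)$ Weyl denominator $S(\rho_n)$. By the Weyl character formula for $GL(n)$, the quotients $S(\lambda^{(i)}+\rho_n)/S(\rho_n)$ and $S(\omega_{i-1}+\rho_n)/S(\rho_n)$ are exactly the irreducible characters $\bigchi_{\lambda^{(i)}}$ and $\bigchi_{\omega_{i-1}}$; the factors of $t^{\lambda_i+n+1-i}$ and $t^{n+1-i}$ that were pulled out in the cofactor expansion then supply the powers of $t$ appearing in the statement. The main bookkeeping obstacle is tracking the alternating signs $(-1)^{i+n+1}$ coming from the cofactor expansion and then noting that the overall sign $(-1)^n$ appears in both numerator and denominator and therefore cancels, after which reindexing $j=n+1-i$ from the bottom of the sum to the top matches the expression displayed in (\ref{eq:relweylchformulagln}) exactly.
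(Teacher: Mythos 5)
Your proposal is correct and is essentially the paper's own argument: cofactor expansion of both $S(\lambda+\rho_{n+1})$ and $S(\rho_{n+1})$ along the $x_{n+1}=t$ column, identification of the minors as $S(\lambda^{(i)}+\rho_n)$ and $S(\omega_{i-1}+\rho_n)$, and division of numerator and denominator by $S(\rho_n)$, with the common overall sign from the cofactor expansion cancelling in the quotient. The index checks you sketch (deleted entries of $\lambda+\rho_{n+1}$ giving $\lambda^{(i)}+\rho_n$, and of $\rho_{n+1}$ giving $\omega_{i-1}+\rho_n$) are exactly the bookkeeping the paper performs.
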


\subsection{Relative Pieri formula}
The second step is to consider a relative Pieri formula. 
Let $\Delta = \bigchi_{\omega_{n}} + \ldots + (-1)^{n} t^{n}$ be the relative Weyl denominator as in equation (\ref{eq:relweylchformulagln}). Our aim now is to obtain a relative Pieri formula, giving the decomposition of $\bigchi_{\mu} \Delta$, for a dominant weight $\mu$ of $GL(n)$.

We see that the representations occurring in the relative Weyl denominator are exactly the fundamental representations of $GL(n)$. Hence the relative Pieri formula is a signed sum of the skew/dual Pieri formula for $GL(n)$ which gives a decomposition of $\bigchi_{\mu} \bigchi_{\omega_i}$. For the proof of relative Pieri, we follow \cite{Okada16}, who proves the dual Pieri formula for the symplectic case. Instead of considering the individual dual Pieri, it is convenient to work with the full relative Weyl denominator. The reason for doing so is the product formula for $\Delta$, given by the following lemma:

\begin{lemma}
With the above notation, the relative Weyl denominator has a product expansion, $\Delta = \prod_i(x_i-t)$.
\end{lemma}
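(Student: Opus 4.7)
The plan is to expand the product on the right-hand side directly and identify each coefficient as a character of a fundamental representation of $GL(n)$. Recall that the $k$-th fundamental weight $\omega_k=(1,\ldots,1,0,\ldots,0)$ (with $k$ ones) corresponds to the exterior power $\Lambda^k(\mathbb{C}^n)$ of the standard representation of $GL(n)$. Evaluating its character at the diagonal matrix with entries $x_1,\ldots,x_n$ therefore gives the elementary symmetric polynomial
$$\bigchi_{\omega_k}=e_k(x_1,\ldots,x_n),$$
with the convention $\bigchi_{\omega_0}=1$.

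Next, apply the standard generating-function identity for elementary symmetric polynomials: collecting terms by the power of $t$ in the expansion of the product, the coefficient of $(-t)^{j}$ is obtained by choosing the factor $x_i$ from exactly $n-j$ of the $n$ parentheses and $-t$ from the remaining $j$, giving
$$\prod_{i=1}^{n}(x_i-t)=\sum_{j=0}^{n}(-1)^{j}t^{j}\,e_{n-j}(x_1,\ldots,x_n)=\sum_{j=0}^{n}(-1)^{j}t^{j}\,\bigchi_{\omega_{n-j}}.$$
The right-hand side is precisely the expression $\bigchi_{\omega_n}-t\bigchi_{\omega_{n-1}}+\cdots+(-1)^{n}t^{n}$ defining $\Delta$, which completes the proof.

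There is no real obstacle here: once one records that fundamental characters for $GL(n)$ are elementary symmetric polynomials, the lemma reduces to the classical Vieta-type identity expressing a monic polynomial as an alternating sum of elementary symmetric functions of its roots. The content of the lemma is that the relative Weyl denominator factors nicely, which is what makes the inductive long-division argument for the relative Pieri formula (and hence the branching rule) tractable in the sequel.
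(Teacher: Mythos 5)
Your proof is correct, and it takes a mildly different route from the paper's. The paper obtains $\Delta=\prod_i(x_i-t)$ by starting from the determinantal origin of the relative Weyl denominator: $\Delta$ was constructed as the cofactor expansion of the Vandermonde determinant $S(\rho_{n+1})$ in the variables $x_1,\dots,x_n,t$, divided by the Vandermonde $S(\rho_n)$ in $x_1,\dots,x_n$; the quotient $\prod_{1\le i<j\le n+1}(x_i-x_j)/\prod_{1\le i<j\le n}(x_i-x_j)$ collapses immediately to $\prod_{i=1}^n(x_i-t)$. You instead start from the \emph{sum} form $\Delta=\sum_{j=0}^n(-t)^j\bigchi_{\omega_{n-j}}$, recognize $\bigchi_{\omega_k}$ as the elementary symmetric polynomial $e_k(x_1,\dots,x_n)$, and match it term-by-term against the Vieta expansion of $\prod_i(x_i-t)$. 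Both arguments are one-liners resting on the Vandermonde/elementary-symmetric identity; yours spells out the representation-theoretic content ($\bigchi_{\omega_k}=e_k$) that the paper leaves implicit, while the paper's version avoids even that by never leaving the determinantal world. Either is a perfectly good proof; yours is a little more self-contained at the cost of one extra recalled fact.
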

\begin{proof}
The denominator in the Schur-Weyl character formula (equation \ref{eq:weylchformulagldet}) is the Vandermonde determinant, which has a product formula $\prod_{i<j}( x_i -x_j)$. Upon dividing by the Weyl denominator of $GL(n)$, this gives us that the relative Weyl denominator  $\Delta = \prod_i(x_i-t)$.
\end{proof}

We use the following notation for length: for $\xi \in \mathbb{R}^n,$ let  $  |\xi| = \sum_i |\xi_i|$ be the length of $\xi$.

If $\mu$ is not a dominant weight, we take $\bigchi_{\mu} =0$.

Given $\lambda= (\lambda_1, \lambda_2, \ldots, \lambda_{n+1})$ be a dominant weight of $GL(n+1)$ and\\ $\mu = (\mu_1,\mu_2, \ldots, \mu_n)$ be a dominant weight of $GL(n)$. Then $\mu \preceq\lambda$ is used to say that $\mu$ interlaces $\lambda$, i.e. $\lambda_1 \geq \mu_1 \geq \lambda_2 \geq \ldots \geq \mu_{n} \geq \lambda_{n+1}$.

\begin{proposition}[Relative Pieri formula]\label{RelativePieri:GLn}
Let $\nu$ is a dominant weight for $GL(n)$. With the above notation, we have the following tensor product decomposition 
\begin{equation}
\bigchi_{\nu} \Delta = \bigchi_{\nu} \left(\sum_{i=0}^n (-t)^{i} \bigchi_{\omega_{n-i}} \right) =  \sum_{\substack{\varepsilon \in \{0,1 \}^n \\ \nu+ \varepsilon \text{ dominant} }} (-t)^{n-|\varepsilon|}\bigchi_{\nu+ \varepsilon}. 
\end{equation}

\end{proposition}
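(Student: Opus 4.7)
The plan is to combine the lemma's product formula $\Delta = \prod_{i=1}^n(x_i-t)$ with the Schur--Weyl character formula $\bigchi_\nu = S(\nu+\rho_n)/S(\rho_n)$ and reduce the proposition to a single determinantal identity. Extending $S(\mu) := \det|x_j^{\mu_i}|$ to arbitrary integer tuples $\mu$ (so that $S$ is alternating in the entries of $\mu$ and vanishes whenever two entries coincide), the target identity is
\[
S(\nu+\rho_n)\,\prod_{j=1}^n(x_j-t) \;=\; \sum_{\varepsilon \in \{0,1\}^n} (-t)^{n-|\varepsilon|}\,S(\nu+\varepsilon+\rho_n).
\]

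I would prove this identity by expanding both factors on the left into monomials and performing a reindexing. Write $\prod_j(x_j-t) = \sum_{\eta \in \{0,1\}^n}(-t)^{n-|\eta|}x^\eta$ and, by the Leibniz formula, $S(\nu+\rho_n) = \sum_{\sigma \in S_n}\mathrm{sgn}(\sigma)\,x^{\sigma(\nu+\rho_n)}$, where $\sigma$ permutes the components of $\nu+\rho_n$. Multiplying and grouping by $\sigma$, the left hand side becomes
\[
\sum_{\sigma} \mathrm{sgn}(\sigma) \sum_{\eta}(-t)^{n-|\eta|}\,x^{\eta + \sigma(\nu+\rho_n)}.
\]
For each fixed $\sigma$, the substitution $\eta = \sigma(\varepsilon)$ is a bijection of $\{0,1\}^n$ preserving $|\cdot|$, and $\eta + \sigma(\nu+\rho_n) = \sigma(\nu+\varepsilon+\rho_n)$ since $\sigma$ acts linearly by coordinate permutation. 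Swapping the order of summation then yields $\sum_\varepsilon (-t)^{n-|\varepsilon|} S(\nu+\varepsilon+\rho_n)$, which is the right hand side.

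After dividing by $S(\rho_n)$, the remaining task is to verify that terms with $\nu+\varepsilon$ non-dominant contribute zero, so that the sum matches the statement's restricted form. Here I would examine the consecutive differences $(\nu+\varepsilon+\rho_n)_i - (\nu+\varepsilon+\rho_n)_{i+1} = (\nu_i-\nu_{i+1})+(\varepsilon_i-\varepsilon_{i+1})+1$, which are always nonnegative and vanish exactly when $\nu_i=\nu_{i+1}$, $\varepsilon_i=0$ and $\varepsilon_{i+1}=1$; this condition is precisely the case in which $\nu+\varepsilon$ fails to be dominant, and simultaneously forces $S(\nu+\varepsilon+\rho_n)=0$ due to the repeated entry. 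The main delicacy lies in this dominance bookkeeping; the monomial reindexing itself is formal once the Leibniz expansion is set up, and no use of the full Pieri rule for fundamental representations is required.
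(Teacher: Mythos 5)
Your proof is correct and is essentially the paper's argument: both rest on the product formula $\Delta=\prod_j(x_j-t)$, on expanding $S(\nu+\rho_n)\prod_j(x_j-t)=\sum_{\varepsilon\in\{0,1\}^n}(-t)^{n-|\varepsilon|}S(\nu+\varepsilon+\rho_n)$, and on the same dominance bookkeeping (a repeated row kills $S(\nu+\varepsilon+\rho_n)$ precisely when $\nu+\varepsilon$ fails to be dominant). The only difference is presentational — you expand via the Leibniz formula and reindex by $\eta=\sigma(\varepsilon)$, while the paper distributes the factors $(x_j-t)$ into the columns and uses row-multilinearity of the determinant — but these are two phrasings of the same expansion.
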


\begin{proof}
From the Weyl character formula equation (\ref{eq:weylchformulagldet}), we get,
\begin{align*}
\Delta  \bigchi_{\nu}  &= \left( \prod_{k=1}^n(x_k-t) \right)\frac{S(\nu + \rho_n)}{S(\rho_n)}\\
&= \frac{\displaystyle \prod_{k=1}^n(x_k-t) \det |x^{\nu_i + n-i}_j| }{S(\rho_n)}.\\
\end{align*}
Using the multilinearity of the determinant, we bring the factor $(x_i-t)$ to the $i^{th}$ column.

\begin{equation*}
\prod_{k=1}^n(x_k-t)\det
\begin{vmatrix}
  x_1^{\nu_1+n-1}  & \cdots & x_n^{\nu_1+n-1} \\
  x_1^{\nu_2+n-2}  & \cdots & x_n^{\nu_2+n-2} \\
  \vdots    & \ddots & \vdots  \\
  x_1^{\nu_n}  & \cdots & x_n^{\nu_n} 
 \end{vmatrix}
 = 
\det
\begin{vmatrix}
  x_1^{\nu_1+n-1} (x_1-t) & \cdots & x_n^{\nu_1+n-1} (x_n-t) \\
  x_1^{\nu_2+n-2} (x_1-t) &\cdots & x_n^{\nu_2+n-2} (x_n-t)\\
  \vdots    &\ddots & \vdots  \\
  x_1^{\nu_n}(x_1-t)  &\cdots & x_n^{\nu_n} (x_n-t) 
 \end{vmatrix}.
\end{equation*}

The individual terms in the determinant can be expanded to give us 
$$x_j^{\nu_i + n-i+1} - tx_j^{\nu_i + n-i}.$$
Every row is a sum of two rows, with one having a linear factor of $t$. We now expand along the rows. In the rows with the factor of $t$, the exponent of $x_j$ remains invariant, whereas in the row without the factor of $t$, the exponent increases by 1. In a given determinant, if there are $r$ rows that have a $(-t)$ factor, we know that the other $n-r$ rows would have the corresponding exponents incremented by 1. This gives us
\begin{equation}
\Delta  \bigchi_{\nu} =\sum_{\varepsilon \in \{0,1 \}^n } (-t)^{n-|\varepsilon|}\frac{\det |x^{\nu_i + n-i + \varepsilon_i}_j | }{S(\rho_n)}.
\end{equation}
In the above sum, corresponding to $\varepsilon$, the determinant is in a standard form for the numerator in the Weyl character formula for irreducible representation of $GL(n)$, with highest weight $\nu +\varepsilon$. If $\nu +\varepsilon$ is not dominant, then there is an $i$ such that $\nu_i +\varepsilon_i < \nu_{i+1}+ \varepsilon_{i+1}$. This forces $\nu_i = \nu_{i+1}$ and $\varepsilon_i=0, \varepsilon_{i+1}=1$, which tells us that $S(\nu+ \varepsilon + \rho_n)$ has two rows having the same entries, therefore vanishes. Hence the summands are non zero only if $\nu+\varepsilon$ is a dominant weight, which gives us

\begin{equation}
\Delta  \bigchi_{\nu} = \sum_{\substack{\varepsilon \in \{0,1 \}^n \\ \nu+ \varepsilon \text{ dominant} }} (-t)^{n-|\varepsilon|}\bigchi_{\nu+ \varepsilon}.
\end{equation}

This completes the proof of the relative Pieri formula for $GL(n)$.
\end{proof}

\begin{corollary}[Dual Pieri formula]
Let $\nu$ be a dominant weight for $GL(n)$ and $\omega_i$ be the $i^{th}$ fundamental weight, corresponding the $i^{th}$ exterior power of the defining representation. Then
$$\bigchi_{\nu}  \bigchi_{\omega_{i}} =  \sum_{\substack{\varepsilon \in \{0,1 \}^n , |\varepsilon|=i \\ \nu+ \varepsilon \text{ dominant} }} \bigchi_{\nu+\varepsilon}.$$
\end{corollary}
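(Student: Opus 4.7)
The plan is to extract the dual Pieri formula directly from the relative Pieri formula in Proposition \ref{RelativePieri:GLn} by comparing coefficients of the powers of $t$ on both sides. Since $t$ is the character of the $GL(1)$ factor, it is algebraically independent of the characters $\bigchi_{\mu}$ of $GL(n)$, so the identity in Proposition \ref{RelativePieri:GLn} is really an equality of polynomials in $t$ whose coefficients are virtual characters of $GL(n)$. Any such polynomial identity can be split apart term-by-term in $t$.

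First I would rewrite the relative Pieri formula
\begin{equation*}
\bigchi_{\nu}\sum_{j=0}^{n}(-t)^{j}\bigchi_{\omega_{n-j}} \;=\; \sum_{\substack{\varepsilon\in\{0,1\}^n \\ \nu+\varepsilon\text{ dominant}}}(-t)^{n-|\varepsilon|}\bigchi_{\nu+\varepsilon},
\end{equation*}
and then look at the coefficient of a fixed power $(-t)^{n-i}$ on each side. On the left-hand side, the only contribution comes from the term $j=n-i$, giving $\bigchi_{\nu}\bigchi_{\omega_{i}}$. On the right-hand side, the exponent $n-|\varepsilon|$ equals $n-i$ precisely when $|\varepsilon|=i$, so the contribution is $\sum \bigchi_{\nu+\varepsilon}$ summed over $\varepsilon\in\{0,1\}^n$ with $|\varepsilon|=i$ and $\nu+\varepsilon$ dominant.

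Equating the two coefficients then gives exactly
\begin{equation*}
\bigchi_{\nu}\bigchi_{\omega_{i}} \;=\; \sum_{\substack{\varepsilon\in\{0,1\}^n,\,|\varepsilon|=i \\ \nu+\varepsilon\text{ dominant}}}\bigchi_{\nu+\varepsilon},
\end{equation*}
as required. There is no real obstacle in this argument; the only thing to check carefully is the independence of $t$ from the $GL(n)$-characters, which is immediate since $t$ parametrizes the $GL(1)$ factor of $H=GL(1)\times GL(n)$ and so the monomials $t^{k}$ are linearly independent over the representation ring of $GL(n)$. The boundary cases $i=0$ and $i=n$ are automatically consistent, since they correspond respectively to $\varepsilon=(0,\ldots,0)$ (recovering $\bigchi_{\nu}$) and $\varepsilon=(1,\ldots,1)$ (recovering $\bigchi_{\nu+(1,\ldots,1)}=\det\cdot \bigchi_{\nu}$).
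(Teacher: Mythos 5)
Your argument is correct and is exactly the paper's proof: the corollary is obtained by comparing the coefficients of the powers of $t$ (the graded components) on both sides of the relative Pieri formula of Proposition \ref{RelativePieri:GLn}. You have merely spelled out the details (independence of $t$ from the $GL(n)$-characters, matching $j=n-i$ with $|\varepsilon|=i$) that the paper leaves implicit.
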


\begin{proof}
    Follows from comparing the graded components of the sum as polynomial in $t$ in the relative Pieri formula.
\end{proof}

\subsection{Proof of $(GL(n+1),GL(n))$ branching rule}


\setcounter{theoremc}{0}
\setcounter{rmkc}{0}

We now prove the branching rule for $(GL(n+1), GL(1) \times GL(n))$.

By equation (\ref{eq:relweylchformulagln}), we see that the character $\bigchi_{\lambda}$ is a homogeneous polynomial in $x_i's$ (and $t=x_{n+1}$) of total degree $|\lambda|$. Hence by equation (\ref{eqbrmu}), we get :

\begin{equation}\label{eqnred}
    \bigchi_{\lambda}|_{H}= \sum_{\nu} m(\nu) \bigchi_{\nu}t^{|\lambda |-|\nu|}= \sum_{r=0}^{\lambda_1+n}t^r \left( \sum_{r + |\nu| = |\lambda|} m(\nu) \bigchi_{\nu} \right).
\end{equation}

By tensoring with a suitable power of the determinant, we can assume that $\lambda_{n+1}=0$. As the numerator and denominator in the relative Weyl character are polynomials in $t$, and the relative Weyl denominator has a non-zero constant term, it follows $\bigchi_{\lambda}|_{H}$ is a polynomial in $t$.

We prove the theorem by increasing induction on the degree of $t$ (we can also prove it by decreasing induction on the degree of $t$). Cross multiplying  equation (\ref{eqnred}) by $\Delta$ and comparing constant coefficients, we get
$$\bigchi_{\lambda^{(n+1)}}= \sum_{|\nu|=|\lambda|} m(\nu) \bigchi_{\nu} \bigchi_{\omega_n} .$$
This gives us, 
\[m(\lambda^{(n+1)} - (1,1,\ldots,1)) = m(\lambda_1,\lambda_2,\ldots, \lambda_n) =1.\]

For our induction hypothesis, we assume that the coefficients of $t^r$ in Equation (\ref{eqnred}) for all $r<k$  satisfy the hypothesis in the theorem. We have shown the induction hypothesis to be true for $k=1$. Note that by homogeneity, the sum of the degree of $t$ and $|\nu|$ is equal the length of $|\lambda|$. By the induction hypothesis, we know that $m(\nu)=1$ for those highest weights $\nu$, for which $|\nu| > |\lambda|-k$, and interlace $\lambda$. We can rewrite the above equation (\ref{eqnred}) as 

\begin{equation}
\bigchi_{\lambda}|_{H}= \sum_{r=0}^{k-1}t^r \left( \sum_{\substack{r + |\nu'| = |\lambda|\\ \nu \preceq \lambda }}  \bigchi_{\nu'} \right) + \sum_{r \geq k}t^r\left( \sum_{r + |\nu| = |\lambda|} m(\nu) \bigchi_{\nu} \right).    
\end{equation}

We would like to understand the multiplicity $m(\nu)$ for a highest weight $\nu$ with $|\nu| +k = |\lambda|$. Upon cross multiplying the above equation by $\Delta$, and using the fact that the constant term of $\Delta$ is $\bigchi_{\omega_n}$, we see that the character $\bigchi_{\nu+\omega_n}$ occurs with multiplicity $m(\nu)$ in the second part of the above sum. 

Applying relative Pieri to the first term in the above expression, we get
$$\sum_{r=0}^{k-1}t^r \left( \sum_{\substack{r + |\nu'| = |\lambda|\\ \nu \preceq \lambda }}  \bigchi_{\nu'} \right) \Delta =\sum_{r=0}^{k-1} t^{r}\left(\sum_{\substack{\nu \preceq \lambda \\r + |\nu'| = |\lambda|}} \sum_{\varepsilon \in \{0,1 \}^n } (-t)^{n-|\varepsilon|}\bigchi_{\nu'+ \varepsilon} \right).$$

We see by homogeneity that a degree $k$ term occurs when $r +n - |\varepsilon|=k$, i.e. $|\lambda|  + n=k+ |\nu'+\varepsilon|$. Note that if $|\varepsilon|=n$, then it can't contribute to the coefficient of $t^k$, as $r<k$. Hence the contribution to the coefficient of $t^k$  term is given by
\begin{equation} \label{eqn:signedsum1}
\sum_{|\varepsilon|<n} (-1)^{n- |\varepsilon|} \left(\sum_{ \substack{|\nu'+\varepsilon|+k=|\lambda|+n\\ \nu' \preceq \lambda}}  \bigchi_{\nu'+\varepsilon} \right).
\end{equation}

To understand the multiplicity $m(\nu)$, we need to know the multiplicity say $n(\nu)$ of  $\bigchi_{\nu + \omega_n}$ in the above expression. By the foregoing equation, this amounts to counting the (signed) number of ways $\nu + \omega_n$ can be written as $\nu' + \varepsilon$ with $|\varepsilon|<n$ and $\nu'$ interlacing $\lambda$. Note that this can be seen as $\nu + \varepsilon'=\nu'$, where $\varepsilon' = \omega_n -\varepsilon$, $|\varepsilon'|=n-|\varepsilon|$ and $|\varepsilon'|>0$.

The key step in the proof of branching is to count the number of ways a given $\nu$ can be modified with $\varepsilon'\in \{0,1\}^n,$ such that $\nu + \varepsilon'$ interlaces $\lambda$. To illustrate the proof, suppose $\nu$ is ``generic", i.e. $\lambda_i > \nu_i \geq \lambda_{i+1}$ for all $i$. In this case, we can arbitrarily assign  $\varepsilon'_i =1$ at any set of $k$-indices with $0<k\leq n$ so that $\nu' = \nu+ \varepsilon'$ interlaces $\lambda$. The total number of such $\varepsilon'$ is given by the binomial coefficient $\binom{n}{|\varepsilon'|}=\binom{n}{k}$. Thus the multiplicity with which the weight 
$\nu+\omega_n$ occurs in equation (\ref{eqn:signedsum1}) is given by, 
\[ n(\nu)=\sum_{k=1}^{n}(-1)^{k}\binom{n}{k}=-1.\]
From the expression for  the relative Weyl numerator, the co-ordinates weights $(\lambda^{j}+\rho_n)_j>\lambda_j$ for $j<i$. Hence the weights $\nu+\omega_n$  do not occur in the Weyl numerator and this gives that $m(\nu)+n(\nu)=0$. Thus for generic $\nu$, we get $m(\nu)=1$.\\

We modify the above argument for general $\nu$. We first show that $\nu$ {\em weakly interlaces} $\lambda$ in the following sense:

\begin{lemma}\label{lemma-almostinterlace}
Let $\nu$ be such that $\nu'  = \nu + \varepsilon'$ interlaces $\lambda$ for some  $\varepsilon' \in \{0,1\}^n,$ with $|\varepsilon'|>0.$  Then  

\begin{equation} \label{eq-weaklyinterlace}
\lambda_i \geq \nu_i \geq \lambda_{i+1}-1.
\end{equation}.
\end{lemma}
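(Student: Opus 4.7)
The plan is to derive the two inequalities $\nu_i \leq \lambda_i$ and $\nu_i \geq \lambda_{i+1}-1$ directly from the interlacing hypothesis, treating each coordinate independently. The key observation is that since $\varepsilon' \in \{0,1\}^n$, we have $\nu_i = \nu'_i - \varepsilon'_i$ with $\varepsilon'_i \in \{0,1\}$, so $\nu_i$ differs from $\nu'_i$ by at most one, and crucially $\nu'_i - 1 \leq \nu_i \leq \nu'_i$.

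First I would unpack the interlacing condition $\nu' \preceq \lambda$, which reads
\begin{equation*}
\lambda_1 \geq \nu'_1 \geq \lambda_2 \geq \nu'_2 \geq \cdots \geq \lambda_n \geq \nu'_n \geq \lambda_{n+1}.
\end{equation*}
For the upper bound, chain $\nu_i \leq \nu'_i \leq \lambda_i$, using $\varepsilon'_i \geq 0$ for the first inequality and interlacing for the second. For the lower bound, chain $\nu_i \geq \nu'_i - 1 \geq \lambda_{i+1} - 1$, using $\varepsilon'_i \leq 1$ and interlacing respectively. This yields \eqref{eq-weaklyinterlace} on the nose.

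There is essentially no obstacle: the statement is a straightforward consequence of the fact that subtracting a $\{0,1\}$-vector from an interlacing sequence can only lower each coordinate by at most one, so the interlacing inequalities are preserved up to a shift of $1$ on the lower side. The hypothesis $|\varepsilon'| > 0$ is not needed for the inequalities themselves; it is included because the lemma is invoked later in the branching argument where that positivity is required for the enumeration of $\varepsilon'$ in equation \eqref{eqn:signedsum1}. Thus the proof should amount to a single two-line display deriving each inequality from the coordinate-wise estimate on $\nu_i$ in terms of $\nu'_i$.
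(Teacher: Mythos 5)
Your proof is correct and is essentially identical to the paper's: both derive $\nu'_i - 1 \leq \nu_i \leq \nu'_i$ from $\varepsilon'_i \in \{0,1\}$ and combine this coordinate-wise with $\lambda_i \geq \nu'_i \geq \lambda_{i+1}$ to get \eqref{eq-weaklyinterlace}. Your remark that $|\varepsilon'|>0$ is not needed for the inequalities themselves is also accurate.
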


\begin{proof}
Given that $\nu'_i=\nu_i + \varepsilon'_i $, and as $\nu' \preceq \lambda$, we get that  $\lambda_i \geq \nu'_i \geq \lambda_{i+1}$. Putting the two of them together, we get that $\lambda_i \geq \nu_i \geq \lambda_{i+1}-1$.
\end{proof}

 Let $M$ be the set of indices $i$ such that $\nu_i = \lambda_{i+1}-1$ and $M'$ be the set of indices $i$ such that $\nu_i =\lambda_i$. Let $m$ and $m'$ be their respective cardinalities. The generic case is when $m=m'=0$. $\nu$ interlaces $\lambda$ if and only if $m=0$. Note that the weights occurring in the relative Weyl numerator corresponds to the extremal case $m+m'=n$ and $m>0$. We first observe,

\begin{lemma}\label{lemma-epsilonrestricted}
Let $\nu$ satisfy the conclusion of Lemma \ref{lemma-almostinterlace}, i.e. $\lambda_i \geq \nu_i \geq \lambda_{i+1}-1$. Suppose there exists an $\varepsilon' \in \{0,1\}^n$ such that $\nu' = \nu + \varepsilon'$ interlaces $\lambda$. Then 
$$m \leq |\varepsilon'| \leq n- m'.$$
\end{lemma}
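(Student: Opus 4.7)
The plan is to extract both bounds directly from the definitions of $M$ and $M'$ by analyzing what the interlacing constraint $\lambda_i \geq \nu'_i \geq \lambda_{i+1}$ forces coordinate-by-coordinate on the $\{0,1\}$-vector $\varepsilon'$.

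First I would establish the lower bound $|\varepsilon'| \geq m$. For each index $i \in M$ we have $\nu_i = \lambda_{i+1} - 1$, so $\nu'_i = \nu_i + \varepsilon'_i = \lambda_{i+1} - 1 + \varepsilon'_i$. The interlacing inequality $\nu'_i \geq \lambda_{i+1}$ forces $\varepsilon'_i \geq 1$, hence $\varepsilon'_i = 1$ since $\varepsilon'_i \in \{0,1\}$. Therefore $|\varepsilon'| \geq |M| = m$.

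Next I would establish the upper bound $|\varepsilon'| \leq n - m'$ symmetrically. For each index $i \in M'$ we have $\nu_i = \lambda_i$, so $\nu'_i = \lambda_i + \varepsilon'_i$. The interlacing inequality $\nu'_i \leq \lambda_i$ forces $\varepsilon'_i \leq 0$, hence $\varepsilon'_i = 0$. Therefore at least $m'$ coordinates of $\varepsilon'$ vanish, giving $|\varepsilon'| \leq n - m'$.

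There is essentially no main obstacle; the only subtlety worth flagging is the consistency of these two constraints, i.e.\ that $M \cap M' = \emptyset$ so the forced assignments $\varepsilon'_i = 1$ on $M$ and $\varepsilon'_i = 0$ on $M'$ do not conflict. This follows because $\lambda$ is dominant: if $i \in M \cap M'$ then $\lambda_i = \nu_i = \lambda_{i+1} - 1$, contradicting $\lambda_i \geq \lambda_{i+1}$. Thus $m + m' \leq n$ and the asserted inequality $m \leq |\varepsilon'| \leq n - m'$ is well-posed and holds.
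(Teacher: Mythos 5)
Your proposal is correct and follows essentially the same argument as the paper: indices in $M$ force $\varepsilon'_i=1$ and indices in $M'$ force $\varepsilon'_i=0$, giving $m \leq |\varepsilon'| \leq n-m'$ by counting. Your extra check that $M \cap M' = \emptyset$ is a harmless (and not strictly necessary) addition, since neither bound requires it.
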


\begin{proof}
Suppose $\nu' = \nu + \varepsilon'$, where $\nu' \preceq \lambda$. For $i \in M$, $\nu_i +1 = \lambda_{i+1}$, which forces $\nu'_i =\lambda_{i+1}$ and $\varepsilon'_i =1$. Similarly, if $i \in M'$, then  $\nu_i = \lambda_i $ and hence $\nu'_i =\lambda_{i}$ and $\varepsilon'_i =0$. Thus out of the $|\varepsilon'|$ many 1's, $m$ of them are fixed, and out of the $n - |\varepsilon'|$ many 0's, $m'$ of them are fixed. Hence if $|\varepsilon'|<m$ or $n-|\varepsilon'|<m'$, then there is no possible $\varepsilon'$ satisfying the requirements of the hypothesis. If $m \leq |\varepsilon'| \leq n- m'$, then we can get a $\varepsilon'=\nu'-\nu$ satisfying the hypothesis. 
\end{proof}

We now count the number of ways $\nu$ can be modified by $\varepsilon' \in \{0,1 \}^n$, such that  $\nu'=\nu+\varepsilon'$ interlaces $\lambda$.

\begin{lemma}\label{lemma1}
Given a weight $\nu$ such that $|\nu| +k = |\lambda|$, then the multiplicity of $\bigchi_{\nu + \omega_n}$  in  $\displaystyle \sum_{ \substack{k+ |\nu'+ \varepsilon|=|\lambda|+n\\ \nu' \preceq \lambda}}  \bigchi_{\nu'+\varepsilon}$  is given by 
$$ \binom{n-m'-m}{|\varepsilon'|-m}, $$
if $\lambda_i \geq \nu_i \geq \lambda_{i+1}-1$ and zero otherwise.

\end{lemma}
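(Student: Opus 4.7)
The plan is a direct combinatorial count. The key change of variables is $\varepsilon' := \omega_n - \varepsilon \in \{0,1\}^n$; under this substitution, a pair $(\nu',\varepsilon)$ with $\nu'+\varepsilon = \nu+\omega_n$ corresponds bijectively to $\varepsilon'\in\{0,1\}^n$ via $\nu' = \nu+\varepsilon'$, with $|\varepsilon'| = n-|\varepsilon|$. Thus, for each fixed cardinality $|\varepsilon'|$, the multiplicity of $\bigchi_{\nu+\omega_n}$ in the corresponding graded piece of the sum equals the number of $\varepsilon'\in\{0,1\}^n$ of that cardinality for which $\nu+\varepsilon'\preceq \lambda$.

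For the vanishing assertion, suppose $\nu$ fails the weak interlacing $\lambda_i\geq \nu_i\geq \lambda_{i+1}-1$ at some index $i$. Then Lemma \ref{lemma-almostinterlace} rules out the existence of any $\varepsilon'$ with $|\varepsilon'|>0$ such that $\nu+\varepsilon'\preceq\lambda$, so the multiplicity is zero. (The case $|\varepsilon'|=0$ is automatically consistent, since it forces $\nu\preceq\lambda$, which already gives weak interlacing.)

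Under the weak interlacing hypothesis, I would proceed coordinate by coordinate. The interlacing requirement $\nu+\varepsilon'\preceq\lambda$ decouples into $n$ independent scalar inequalities $\lambda_i\geq \nu_i+\varepsilon'_i\geq \lambda_{i+1}$, from which dominance of $\nu+\varepsilon'$ is automatic. A trivial case analysis then yields: for $i\in M$ the equation $\nu_i=\lambda_{i+1}-1$ forces $\varepsilon'_i=1$; for $i\in M'$ the equation $\nu_i=\lambda_i$ forces $\varepsilon'_i=0$; and for every remaining index both values of $\varepsilon'_i$ are admissible, since $\lambda_{i+1}\leq \nu_i\leq \lambda_i-1$. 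One should observe that $M\cap M'=\emptyset$: otherwise some $i$ would satisfy $\lambda_i=\nu_i=\lambda_{i+1}-1$, contradicting $\lambda_i\geq\lambda_{i+1}$.

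With the $m$ forced ones on $M$ and $m'$ forced zeros on $M'$ in place, the constraint that $|\varepsilon'|$ equals the prescribed cardinality requires exactly $|\varepsilon'|-m$ additional ones among the $n-m-m'$ free coordinates, and these may be chosen independently. This produces the count $\binom{n-m-m'}{|\varepsilon'|-m}$, which vanishes automatically unless $m\leq |\varepsilon'|\leq n-m'$, consistent with Lemma \ref{lemma-epsilonrestricted}. The argument amounts to careful bookkeeping; the only points needing verification are the disjointness of $M$ and $M'$ and the independence of the free coordinate choices, neither of which presents any real obstacle.
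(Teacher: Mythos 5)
Your proof is correct and follows essentially the same route as the paper's: the substitution $\varepsilon'=\omega_n-\varepsilon$, the reduction to counting $\varepsilon'\in\{0,1\}^n$ with $\nu+\varepsilon'\preceq\lambda$, and the forced values on $M$ and $M'$ with free choices at the remaining $n-m-m'$ indices giving $\binom{n-m-m'}{|\varepsilon'|-m}$ are exactly the steps the paper carries out via Lemmas \ref{lemma-almostinterlace} and \ref{lemma-epsilonrestricted}. The additional points you verify (disjointness of $M$ and $M'$, the coordinatewise decoupling of the interlacing condition) are details the paper leaves implicit, and they check out.
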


\begin{proof}
For such a $\nu$, we want to count $\varepsilon' \in \{0,1 \}^n$, such that $\nu' = \nu+ \varepsilon'$ interlaces $\lambda$.

By Lemma \ref{lemma-almostinterlace}, we get that multiplicity of $\bigchi_{\nu}$ is non zero only if  $\lambda_i \geq \nu_i \geq \lambda_{i+1}-1$.

We are now reduced to the case $\lambda_i \geq \nu_i \geq \lambda_{i+1}-1$ for all $i$. 

By Lemma \ref{lemma-epsilonrestricted}, we get that if $|\varepsilon'|<m$ or $n-|\varepsilon'|<m'$, then the multiplicity of $\bigchi_{\nu}$ is zero. As in the generic case, we can now freely choose $|\varepsilon'|-m$ indices amongst  $\{ 1,2, \ldots,n\} \backslash (M \cup M')$ positions. This gives the multiplicity as 
$$\binom{n-m-m'}{ |\varepsilon'|-m} = \binom{n-m-m'}{n- |\varepsilon'|-m'}.$$

\end{proof}

\begin{corollary}\label{corol-multiplicities}
Given $\nu$ with $|\nu| +k = |\lambda|$, and $\nu$ satisfies equation \ref{eq-weaklyinterlace}, the multiplicity $n(\nu)$ of $\bigchi_{\nu+\omega_n}$ in 
$$\sum_{|\varepsilon|<n} (-1)^{n- |\varepsilon|} \left(\sum_{ \substack{|\nu'+\varepsilon|+k=|\lambda|+n\\ \nu' \preceq \lambda}}  \bigchi_{\nu'+\varepsilon} \right)=
\begin{cases}
    \displaystyle -1  & \quad \text{if } m=0\\
    (-1)^{m} & \quad \text{if } m+m' =n,m>0\\
   0  & \quad \text{otherwise.}
  \end{cases}
$$
\end{corollary}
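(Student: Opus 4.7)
The plan is to regard the alternating sum as a signed count of pairs $(\varepsilon,\nu')$ with $\nu'\preceq\lambda$ and $\nu'+\varepsilon=\nu+\omega_n$. The key substitution is $\varepsilon':=\omega_n-\varepsilon$, which is a bijection of $\{0,1\}^n$ to itself; under it the matching condition becomes $\nu'=\nu+\varepsilon'$, the sign $(-1)^{n-|\varepsilon|}$ becomes $(-1)^{|\varepsilon'|}$, and the range $|\varepsilon|<n$ becomes $|\varepsilon'|>0$. So I would rewrite
\[ n(\nu)=\sum_{j=1}^{n}(-1)^{j}\,\#\{\varepsilon'\in\{0,1\}^n:|\varepsilon'|=j,\ \nu+\varepsilon'\preceq\lambda\}. \]

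By Lemma \ref{lemma1}, the cardinality in the bracket equals $\binom{n-m-m'}{\,j-m\,}$, with the usual convention that $\binom{N}{r}=0$ for $r<0$ or $r>N$. After the substitution $\ell=j-m$ the computation reduces to evaluating
\[ n(\nu)=(-1)^{m}\sum_{\ell}(-1)^{\ell}\binom{n-m-m'}{\ell}, \]
where the lower bound on $\ell$ is $\max(1-m,0)$. I would then split into three cases. If $m=0$ the sum is missing the $\ell=0$ term, so it equals $-1$ by the standard identity $\sum_{\ell=0}^{N}(-1)^{\ell}\binom{N}{\ell}=0$ valid for $N\ge 1$. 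If $m>0$ and $n-m-m'=0$, only the $\ell=0$ term survives and we obtain $(-1)^{m}$. If $m>0$ and $n-m-m'\ge 1$, the full alternating sum is present and vanishes.

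The only bookkeeping point I anticipate is excluding the degenerate subcase $m=0$, $m'=n$, in which the first computation would formally give $0$ rather than $-1$. But $m'=n$ forces $\nu_i=\lambda_i$ for every $i\le n$, hence $|\nu|=|\lambda|-\lambda_{n+1}=|\lambda|$ under the normalization $\lambda_{n+1}=0$ already in force in the main proof; this contradicts $|\nu|+k=|\lambda|$ with $k\ge 1$, and the case $k=0$ has been treated as the base of the induction. Apart from this check, the corollary is a direct consequence of Lemma \ref{lemma1} together with the inclusion-exclusion identity $(1-1)^{N}=0$, with no further casework beyond the three evaluations above.
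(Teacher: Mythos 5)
Your argument is correct and follows essentially the same route as the paper: pass to $\varepsilon'=\omega_n-\varepsilon$, use Lemma \ref{lemma1} to express the count as $\binom{n-m-m'}{|\varepsilon'|-m}$, and evaluate the resulting alternating sum via $(1-1)^{N}$ in the three cases. Your extra check excluding the degenerate subcase $m=0$, $m'=n$ (where the sum would be empty and the formula $-1$ would fail) is a boundary point the paper's proof passes over silently; as you note, it cannot occur since it would force $|\nu|=|\lambda|$, contradicting $k\ge 1$ in the induction step.
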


\begin{proof}
We want to count $\varepsilon\in \{0,1 \}^n$ such that $\nu + \omega_n = \nu' + \varepsilon$, which is equivalent to counting $\varepsilon'\in \{0,1 \}^n$ such that $\nu'  = \nu + \varepsilon'$. By Lemmas \ref{lemma-almostinterlace} , \ref{lemma-epsilonrestricted} and \ref{lemma1}, we get that the multiplicity of $\bigchi_{\nu+\omega_n}$ is given by 
$$\sum_{\substack{m \leq |\varepsilon'| \leq n-m'\\ |\varepsilon'|>0}} (-1)^{|\varepsilon'|} \binom{n-m-m'}{|\varepsilon'|-m}. $$

When $m>0$ and $ n=m+m'$, the only term in the above sum is when $|\varepsilon'|=m$.  Hence the sum is equal to $(-1)^m$.

 When $m>0$, upto a sign, the above sum is equal to $(1-1)^{n-m-m'}$ and hence vanishes.

When $m=0$, the above expression differs from $(1-1)^{n-m'}$ by $1$, hence the sum is equal to $-1$.

This covers all the cases, hence completes the proof of the corollary.

\end{proof}

\subsubsection{Proof of Theorem \ref{theorem:branching-gln}}\quad \\We have the relative Weyl numerator. 
$$\bigchi_{\lambda}|_{H} \Delta= t^{\lambda_{n+1}} \bigchi_{\lambda^{(n+1)}}  + \ldots  +(-1)^{n-j+1} \bigchi_{\lambda^{(j)}} + \ldots+ (-1)^{n} t^{\lambda_{1}+n} \bigchi_{\lambda^{(1)}}.$$
We see that 
$$\lambda^{(j)} - \omega_n = (\lambda_1, \ldots, \lambda_{j-1}, \lambda_{j+1}-1,\ldots,\lambda_n-1),$$
satisfies equation (\ref{eq-weaklyinterlace}). Since $n(\nu)$ vanishes if $\nu$ does not satisfy equation (\ref{eq-weaklyinterlace}), it follows that $m(\nu)$ vanishes for such weights.

Suppose $\nu$ is of the form $\lambda^{(j)} - \omega_n$ for some $j$. We have where 
$$m=n-j+1>0, \, m'=j-1 \text{ and } n=m+m'.$$  
The multiplicity with which $\lambda^{(j)}$ occurs in the relative Weyl numerator is $(-1)^{n-j+1}$. As this is equal to $m(\nu) + (-1)^m$, we get that $m(\nu)=0$.

Suppose $\nu$ satisfies equation (\ref{eq-weaklyinterlace}) and $\nu + \omega_n \neq \lambda^{(j)}$ for any $j$. Since $n(\nu)=-1$ precisely when $\nu$ interlaces $\lambda$, it follows that $m(\nu)=1$ if and only if $\nu$ interlaces $\lambda$.

This completes the proof of Theorem \ref{theorem:branching-gln}.

 \begin{rmk}
 \normalfont
It is possible to do the comparison of coefficients from the opposite side, so the induction would work in the decreasing order of the degree of $t$. 
 \end{rmk}

 \begin{rmk}
 \normalfont
At each stage in the proof, we try to match the least degree coefficient of $t$ in an inductive order, knowing the values for lesser powers of $t$. This is essentially a long division of the relative Weyl character formula. 
 \end{rmk}

 \begin{rmk}
 \normalfont
Our initial method was to formally invert the relative Weyl denominator. For any natural number $m$, we have the equation, 
\[ \sum_{i=0}^n (-1)^i\Lambda^i(V)S^{m-i}(V)=0,\]
in the category of virtual representations of $GL(V)$. Here, $\Lambda^i(V)$ (resp. $S^i(V)$ denotes the $i$-th exterior (resp. symmetric) power representation of the standard representation of $GL(V)$ on $V$ and $n$ is the dimension of $V$. Using the above equation, the formal inverse of the Weyl denominator turns out to be, 
\[ t^{-n}\sum_{j=0}^{\infty} t^j S^j(V).\]
This can also be seen directly from the product form of the relative Weyl denominator. Now we appeal to (usual) Pieri formula involving tensor products with symmetric power representation and argue inductively to get at the branching. 
See  \cite{li2018branching} for a proof along these lines. 

However, when considering branching from odd to even orthogonal groups of the same rank (following section), we run into the problem of making sense of inverting the relative Weyl denominator. This led us to consider the more direct approach outlined here to proving the branching formula, which has the advantage that it extends to other cases as well.
 \end{rmk}

\section{Branching rules for $(Spin(2n+1), Spin(2n))$} \label{section_branchingspinodd}

\setcounter{theoremc}{1}
\setcounter{rmkc}{0}
In this Section, we give a proof of Theorem \ref{thmbranchingbn}, giving the classical branching rules for orthogonal groups, with the subgroup $H$ being regular. We follow the schema of proof as given for $(GL(n+1),GL(n))$: give a $(G,H)$-relative Weyl character formula, a $(G,H)$-relative Pieri formula and finally the branching rule is derived in an inductive manner following the process of long division. 

Although the relative Weyl character formula looks different from that of $GL(n)$, the combinatorics of the relative Pieri formula is similar to that of the $GL(n)$ case. Hence, the derivation of the branching rule from that of the relative Pieri is similar to that of the general linear case.

\subsection{Weyl character formula.} We follow the notation and convention as in \cite[Chapter 24]{fulton2013representation}.
For $m\geq 4$, let $Spin(m)$ be the simply connected double cover of $SO(m, \mathbb{C})$.

We consider $\mathbb{R}^n$ with the standard basis $L_i$. Denote by $x_i^{\pm 1} = e^{\pm L_i}$  the formal exponents of $\pm L_i$.
Given a tuple $\eta= (\eta_1, \eta_2  \ldots, \eta_{n}) = \sum_i \eta_i L_i$,  define
$$D^+(\eta)= \det |x_j^{\eta_i} + x_j^{-\eta_i}| \quad\text{ and } \quad D^-(\eta)= \det |x_j^{\eta_i} - x_j^{-\eta_i}|.$$
We observe that $D^+(\eta)$ is invariant under sign changes of $\eta$, whereas $D^-(\eta)$ is alternating with respect to sign changes.
 
\subsubsection{$Spin(2n+1)$.} Let $G=Spin(2n+1)$ and $H= Spin(2n)$.
The set of positive roots of $Spin(2n+1)$ is given by:
$$\Phi^+(B_n) = \{L_i \pm L_j \mid 1\leq i<j \leq n\} \cup \{L_i \mid 1 \leq i \leq n\}.$$
The highest weights of $G$ are described by $n-$tuples $$\lambda= (\lambda_1, \lambda_2  \ldots, \lambda_{n}),$$  
where $\lambda_1 \geq \lambda_2 \geq \ldots \geq \lambda_{n} \geq 0$ are all integers or all half integers. 
Let  $$\rho_{G} = (n-1+1/2, n-2+1/2, \ldots, 1/2),$$ be half the sum of positive roots for $G$. The character for the highest weight representation with highest weight $\lambda$ is given by,  
\begin{equation}\label{weylchformulabn}
\bigchi_{\lambda} = \frac{D^-(\lambda + \rho_G)}{D^-(\rho_G)}. 
\end{equation}

\subsubsection{$Spin(2n)$} The set of positive roots of $Spin(2n)$ is given by:
$$\Phi^+(D_n) = \{L_i \pm L_j | 1\leq i<j \leq n\} .$$
The highest weights of $H$  are given by an $n-$tuple 
$$\mu = (\mu_1,\mu_2,\ldots, \mu_n),$$   
where $\mu_1 \geq \mu_2 \geq \ldots \geq |\mu_{n}| \geq 0$ are all integers or all half integers. Let  
$$\rho_{H} = \sum (n -i )L_i= (n-1,n-2, \ldots , 0)$$ 
be half the sum of positive roots of $H$. The Weyl character formula for the irreducible representation of $H$ with highest weight $\mu$ is given by, 
\begin{equation}\label{weylchformuladn}
\bigchi_{\mu} = \frac{D^-(\mu + \rho_H) + D^+(\mu + \rho_H)}{D^+(\rho_H)}.
\end{equation}

\subsection{Relative Weyl character formula}\quad\\
The numerator and denominator of the  Weyl character formula for $G$ involve a sum of weights of $G$. 
Like in the case of $GL(n)$, we now get a relative Weyl character formula for $G$ with respect to $H$, where the numerator and denominator are sums of characters of $H$. As above, we identify the tori of $g$ and $H$.

\begin{proposition}[Relative Weyl character formula]\label{relativeweylchbn}
Given $\lambda= (\lambda_1, \lambda_2  \ldots, \lambda_{n})$ is a highest weight for $G$, then 
$$\bigchi_\lambda|_H = \frac{\bigchi_{\lambda^+}- \bigchi_{\lambda^-}}{\Delta}, $$
\begin{gather*}
\text{where }\,\lambda^+ = \left(\lambda_1+\frac{1}{2}, \lambda_2 +\frac{1}{2}, \ldots , \lambda_n +\frac{1}{2}\right), \\ 
\lambda^- = \left(\lambda_1+\frac{1}{2}, \lambda_2 +\frac{1}{2}, \ldots , -\lambda_n -\frac{1}{2}\right) , 
\end{gather*}
are highest weights for $H$.  
Here,
$$\Delta = \prod_{i=1}^n (x_i^{1/2} - x_i^{-1/2}) = S^+-S^-,$$ is the relative Weyl denominator for the pair $(G,H)$, where $S^{\pm}$ are the half spin representations of $H$ corresponding to the highest weights $\lambda^{\pm}$ with $\lambda= (0,\ldots,0).$ 
\end{proposition}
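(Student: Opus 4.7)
The identity to establish, after clearing denominators, is
$$\bigchi_\lambda|_H \cdot \Delta \;=\; \bigchi_{\lambda^+} - \bigchi_{\lambda^-}.$$
The plan is to rewrite both sides using the determinantal formulas \eqref{weylchformulabn} and \eqref{weylchformuladn}, and to reduce the whole proposition to a single denominator identity between $D^-(\rho_G)$, $D^+(\rho_H)$ and $\Delta$.

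First I would record the crucial coincidence of shifted weights: since $\rho_G = (n-\tfrac12, n-\tfrac32, \ldots, \tfrac12)$ and $\rho_H = (n-1, n-2, \ldots, 0)$, the tuple $\lambda^+ + \rho_H$ is literally equal to $\lambda + \rho_G$, while $\lambda^- + \rho_H$ is obtained from $\lambda^+ + \rho_H$ by negating the last entry. Using the sign properties of $D^{\pm}$ recorded in the excerpt ($D^+$ is invariant and $D^-$ alternating under sign flips), this gives $D^+(\lambda^- + \rho_H) = D^+(\lambda^+ + \rho_H)$ and $D^-(\lambda^- + \rho_H) = -D^-(\lambda^+ + \rho_H)$, so the $D^+$ terms in \eqref{weylchformuladn} cancel while the $D^-$ terms double, yielding
$$\bigchi_{\lambda^+} - \bigchi_{\lambda^-} \;=\; \frac{2\, D^-(\lambda + \rho_G)}{D^+(\rho_H)}.$$
On the other side, $\bigchi_\lambda|_H \cdot \Delta = D^-(\lambda+\rho_G)\cdot \Delta / D^-(\rho_G)$, so the whole proposition is reduced to the single identity
$$2\, D^-(\rho_G) \;=\; \Delta \cdot D^+(\rho_H).$$

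This denominator identity is the main technical step. I would prove it by a column-then-row manipulation in the spirit of the lemma factoring $\Delta$ in the $GL$ case. For each $m \in \mathbb{Z}_{\geq 0}$ one has the factorization
$$x^{m+1/2} - x^{-(m+1/2)} \;=\; (x^{1/2} - x^{-1/2})\bigl(x^m + x^{m-1} + \cdots + x^{-m}\bigr).$$
Applying this column by column to $D^-(\rho_G) = \det|x_j^{n-i+1/2} - x_j^{-(n-i+1/2)}|$ pulls $\prod_j (x_j^{1/2} - x_j^{-1/2}) = \Delta$ out in front and leaves the determinant of the matrix with entries $A_{ij} = 1 + \sum_{\ell=1}^{n-i}(x_j^\ell + x_j^{-\ell})$. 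Setting $B_{ij} = x_j^{n-i} + x_j^{-(n-i)}$, so that $\det B = D^+(\rho_H)$ and the last row of $B$ is identically $2$, one checks that row $i$ of $A$ equals $\tfrac12 B_{n,\bullet} + B_{n-1,\bullet} + \cdots + B_{i,\bullet}$. In matrix form $A = TB$ with $T$ upper triangular and diagonal entries $1,1,\ldots,1,\tfrac12$; hence $\det A = \tfrac12 D^+(\rho_H)$, which is precisely the required identity. Combining with the previous step yields the relative Weyl character formula, and the concluding statement $\Delta = S^+ - S^-$ then falls out by specializing the formula at $\lambda = 0$, since $\lambda^\pm$ are exactly the highest weights of the two half-spin representations of $H$.
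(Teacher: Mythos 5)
Your proposal is correct, and it coincides with the paper's proof on the first step: both identify that $\lambda^{+}+\rho_H = \lambda+\rho_G$, use the sign behaviour of $D^{\pm}$ under negating a single entry, and reduce the whole proposition to the single denominator identity $2\,D^-(\rho_G) = \Delta\cdot D^+(\rho_H)$. Where you diverge is in how you establish that identity. The paper invokes the Weyl denominator product formula $D^-(\rho_G)=\prod_{\alpha\in\Phi^+(B_n)}(e^{\alpha/2}-e^{-\alpha/2})$ together with the decomposition $\Phi^+(B_n)=\Phi^+(D_n)\sqcup\{L_1,\ldots,L_n\}$, reading off the factorization in one line; you instead factor $x^{1/2}-x^{-1/2}$ out of each column of $D^-(\rho_G)$ using the telescoping identity $x^{m+1/2}-x^{-(m+1/2)}=(x^{1/2}-x^{-1/2})\sum_{\ell=-m}^{m}x^{\ell}$, and then exhibit the remaining matrix $A$ as $TB$ with $T$ unipotent-except-for-$T_{nn}=\tfrac12$, giving $\det A=\tfrac12 D^+(\rho_H)$. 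Both are correct; your version is more elementary and self-contained at the level of raw determinant manipulation, in the spirit of the paper's factorization lemma for $\Delta$ in the $GL$ case and of its stated aim that only the determinantal Weyl character formula is taken as a black box, whereas the paper's argument is shorter and more conceptual but relies on the product formula for the Weyl denominator as a separate input. Your closing observation, that $\Delta = S^+-S^-$ follows by specializing the formula at $\lambda=0$, also matches the paper.
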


\begin{proof}
By the product formula for the Weyl denominator, we observe that 
\begin{align*}
D^-(\rho_G) &= \prod_{\alpha \in \Phi^+(B_n)}(e^{\alpha/2}-e^{-\alpha/2})\\
&=\prod_{\alpha \in \Phi^+(D_n)}(e^{\alpha/2}-e^{-\alpha/2}) \prod_{i=1}^n (x_i^{1/2} - x_i^{-1/2}) \\
&= \frac{D^+(\rho_H)}{2} \prod_{i=1}^n (x_i^{1/2} - x_i^{-1/2}) ,
\end{align*}
where $e^{\pm \alpha/2}$ is the formal exponent corresponding to half the root $\pm \alpha$.
Note that $\lambda^+$ and $ \lambda^-$ are equal in all but the last entry, where they are negative of each other.  As $D^-$ is alternating under sign changes in $\eta$, we have that
$$D^-(\lambda^+ + \rho_H) = -D^-(\lambda^- + \rho_H),  $$
whereas $D^+$ is invariant under sign changes, giving us
$$D^+(\lambda^+ + \rho_H) = D^+(\lambda^- + \rho_H).$$
We also have that $\lambda+ \rho_G=\lambda^+ + \rho_H$, which gives us
$$D^-(\lambda+ \rho_G) = D^-(\lambda^+ + \rho_H). $$
Using the above identities, we can express the numerator  $D^-(\lambda +\rho_G)$ of $\bigchi_{\lambda}$ as, 
$$ \frac{\left( D^-(\lambda^+ + \rho_H) + D^+(\lambda^+ + \rho_H) \right) - \left(D^-(\lambda^- + \rho_H) + D^+(\lambda^- + \rho_H)\right)}{2}. $$

Dividing the Weyl numerator with the Weyl denominator gives us the relative Weyl character.
The denominator corresponds to taking $\lambda = (0,\ldots,0)$. This gives us the half spin representations.
\end{proof}

\begin{rmk}
\normalfont
One can arrive at the above Proposition by looking at the expansion of the Weyl character formula with respect to the the cosets of the Weyl group of $H$ in the Weyl group of $G$, which has been expounded in \cite{gross1998weyl}.
\end{rmk}

\subsection{Relative Pieri formula}\quad
 We now derive the relative Pieri formula for $(G,H)$, by which we mean the tensor product decomposition of a highest weight representation of $H$ with that of the relative Weyl denominator $\Delta$. One can prove the relative Pieri formula by proving the tensor product decompositions with $S^{\pm}$ individually. However, here we follow the approach of Okada's \cite{Okada16}, where we use the determinantal identities for the Weyl character formula along with the product expansion for the relative Weyl denominator.

\begin{proposition}[Relative Pieri formula]\label{relpierispin}
Given $\mu$ a dominant weight of $Spin(2n)$, we have the following decomposition:
$$\bigchi_{\mu}  \Delta  =\sum_{\substack{\varepsilon\in \{\pm 1\}^n\\\mu + \varepsilon/2 \text{ dominant}  }} (-1)^{\varepsilon} \bigchi_{\mu + \varepsilon/2}.$$
    
\end{proposition}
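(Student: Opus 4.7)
The plan is to mimic the Okada-style argument used for $GL(n+1)$ in Proposition~\ref{RelativePieri:GLn}. Starting from the Weyl character formula \eqref{weylchformuladn}, I would multiply $\bigchi_\mu$ by $\Delta=\prod_{j=1}^n(x_j^{1/2}-x_j^{-1/2})$, distributing the $j$-th factor into column $j$ of each of the two numerator determinants $D^\pm(\mu+\rho_H)$ using multilinearity. The heart of the computation is the pair of elementary identities
\begin{align*}
(x^{a}-x^{-a})(x^{1/2}-x^{-1/2}) &= (x^{a+1/2}+x^{-(a+1/2)})-(x^{a-1/2}+x^{-(a-1/2)}),\\
(x^{a}+x^{-a})(x^{1/2}-x^{-1/2}) &= (x^{a+1/2}-x^{-(a+1/2)})-(x^{a-1/2}-x^{-(a-1/2)}),
\end{align*}
which show that multiplication by $(x_j^{1/2}-x_j^{-1/2})$ in column $j$ converts a $D^\mp$-type entry into a difference of two $D^\pm$-type entries, with the row index shifted by $+1/2$ (weighted $+$) or $-1/2$ (weighted $-$). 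In particular, multiplication by $\Delta$ swaps the roles of $D^+$ and $D^-$.

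Next, I would expand each modified determinant by multilinearity along the rows, encoding the choice of $\pm 1/2$ shift in row $i$ by a sign $\varepsilon_i\in\{\pm 1\}$. Since the split in row $i$ does not depend on the column index, this gives
\[\Delta\cdot D^{\mp}(\mu+\rho_H)\;=\;\sum_{\varepsilon\in\{\pm 1\}^n}(-1)^{m(\varepsilon)}\,D^{\pm}(\mu+\rho_H+\varepsilon/2),\]
where $m(\varepsilon)=\#\{i:\varepsilon_i=-1\}$; writing $(-1)^{\varepsilon}$ for $(-1)^{m(\varepsilon)}$, adding these two identities and dividing by $D^+(\rho_H)$ yields
\[\bigchi_\mu\,\Delta\;=\;\sum_{\varepsilon\in\{\pm 1\}^n}(-1)^\varepsilon\,\bigchi_{\mu+\varepsilon/2}\]
as a formal sum over all sign vectors $\varepsilon$.

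The main technical step, which I expect to be where care is needed, is to show that whenever $\mu+\varepsilon/2$ fails to be dominant for $Spin(2n)$ the corresponding summand vanishes, so that the sum collapses to the one in the statement. A short case analysis, using $\mu_i\geq\mu_{i+1}$ and $\mu_{n-1}\geq|\mu_n|$, shows that any failure of dominance is forced into one of two situations: (i) $\mu_i=\mu_{i+1}$ with $\varepsilon_i=-1$, $\varepsilon_{i+1}=+1$ for some $i<n$, in which case $(\mu+\rho_H+\varepsilon/2)_i=(\mu+\rho_H+\varepsilon/2)_{i+1}$ and rows $i,i+1$ of both $D^\pm$-matrices coincide; or (ii) $\mu_{n-1}+\mu_n=0$ with $\varepsilon_{n-1}=\varepsilon_n=-1$, in which case $(\mu+\rho_H+\varepsilon/2)_{n-1}=-(\mu+\rho_H+\varepsilon/2)_n$, making rows $n-1$ and $n$ negatives of each other in $D^-$ and equal in $D^+$. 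In either scenario both $D^\pm(\mu+\rho_H+\varepsilon/2)$ vanish, so the sum reduces to dominant shifts, completing the proof.
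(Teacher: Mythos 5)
Your proposal is correct and follows essentially the same Okada-style argument as the paper: distribute the factors of $\Delta=\prod_i(x_i^{1/2}-x_i^{-1/2})$ into the columns of $D^{\pm}(\mu+\rho_H)$, use the same two product identities to swap $D^+$ and $D^-$ with half-integer shifts via a row-wise multilinear expansion, add, divide by $D^+(\rho_H)$, and discard non-dominant shifts by a vanishing-determinant argument. If anything, your explicit case (ii) at the last coordinate ($\mu_{n-1}+\mu_n=0$, $\varepsilon_{n-1}=\varepsilon_n=-1$, where the two rows are negatives of each other in $D^-$ and equal in $D^+$) is treated more carefully than in the paper, which only records the identical-rows case.
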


\begin{proof}
For the dominant weight $\mu$ of $H$, by the Weyl character formula for $H$ (equation (\ref{weylchformuladn}))
$$\bigchi_{\mu} \Delta= \frac{\Delta D^-(\mu + \rho_H) + \Delta D^+(\mu + \rho_H)}{D^+(\rho_H)}.$$
We simplify the numerator in the above expression. We first observe that, 
$$(x_j^{l_i} - x_j^{-l_i})(x_j^{1/2} - x_j^{-1/2}) = (x_j^{l_i+1/2} + x_j^{-(l_i+1/2)}) - (x_j^{l_i-1/2} + x_j^{-(l_i-1/2)}), $$
and 
$$(x_j^{l_i} + x_j^{-l_i})(x_j^{1/2} - x_j^{-1/2}) = (x_j^{l_i+1/2} - x_j^{-(l_i+1/2)}) - (x_j^{l_i-1/2} - x_j^{-(l_i-1/2)}), $$
where $l=(l_1, l_2, \ldots, l_n)$ is a tuple of all distinct integers or all half integers. Using the $n$-linearlity of the determinant as in the case of $GL(n)$, along with the above identities  gives us:

\begin{align}
\Delta D^-(l)  &= \prod_{k=1}^n (x_k^{1/2} - x_k^{-1/2}) \left(\det|x_i^{l_j} - x_i^{-l_j}|  \right)\nonumber\\
&= \det|(x_i^{l_j} - x_i^{-l_j})(x_i^{1/2} - x_i^{-1/2})|\nonumber \\
&= \det|(x_i^{l_j+1/2} + x_i^{-(l_j+1/2)}) - (x_i^{l_j-1/2} + x_i^{-(l_j-1/2)})|\nonumber\\
&=\sum_{\varepsilon\in \{\pm 1\}^n} (-1)^{\varepsilon} \det|x_i^{l_j+\varepsilon_j/2} + x_i^{-(l_j+\varepsilon_j/2)}|\nonumber\\
&=\sum_{\varepsilon\in \{\pm 1\}^n} (-1)^{\varepsilon} D^+(l+\varepsilon/2)\label{relpierid-},
\end{align}
where $(-1)^{\varepsilon} = \prod_{i=1}^n \varepsilon_i.$
With similar calculation, we get
\begin{equation}\label{relpierid+}
\Delta  D^+(l) =   \sum_{\varepsilon\in \{\pm 1\}^n} (-1)^{\varepsilon} D^-(l +\varepsilon/2).
\end{equation}
For $l = \mu + \rho_H$ , adding (\ref{relpierid+}) and (\ref{relpierid-}), and dividing by the Weyl denominator, we get
 \begin{align}
\bigchi_{\mu} \Delta &= \frac{\Delta D^-(\mu + \rho_H) + \Delta D^+(\mu + \rho_H)}{D^+(\rho_H)}\nonumber\\ 
&=\sum_{\varepsilon\in \{\pm 1\}^n} (-1)^{\varepsilon} \frac{D^-(\mu +\varepsilon/2+ \rho_H) +  D^+(\mu +\varepsilon/2+ \rho_H)}{D^+(\rho_H)}\nonumber\\
&= \sum_{\varepsilon\in \{\pm 1\}^n} (-1)^{\varepsilon}  \bigchi_{\mu + \varepsilon/2}.
 \end{align}
Note that if there is some $\varepsilon$ such that $\mu + \varepsilon/2$ is not dominant, then there exists a $j$ such that $\mu_j + n -j +\varepsilon_j = \mu_{j+1} + n - j -1 + \varepsilon_{j+1}$. This gives us that the $j^{th}$ and $(j+1)^{th}$ rows are identical in both the determinants, therefore the numerator vanishes. Hence the non-zero summands corresponds to those $\varepsilon$ for which $\mu + \varepsilon/2$ which are dominant. This completes the proof.
\end{proof}

\begin{rmk}
\normalfont
An alternate way of proving the relative Pieri formula is to consider the tensor product with the half spin representations $S^{\pm}$ individually, using  the fact that $S^{\pm}$ are both miniscule representations of $H$. Then using the theorem for tensor product for miniscule representations (\cite[Corollary 3.5]{shrawantensor} , we get the desired result. We give now a different derivation using the determinental expressions.
\end{rmk}

\begin{corollary}
 Let $\lambda$ be a dominant weight for $H$ and $S^{\pm}$ are its half spin representations. Then

 \begin{align}
 \bigchi_{\mu}  S^+ &= \sum_{\substack{\varepsilon\in \{\pm 1\}^n, |\varepsilon| \text{ even}\\ \mu + \varepsilon/2 \text{ dominant}  }} \bigchi_{\mu + \varepsilon/2}, \\ \bigchi_{\mu}  S^+&= \sum_{\substack{\varepsilon\in \{\pm 1\}^n, |\varepsilon| \text{ odd}\\ \mu + \varepsilon/2 \text{ dominant}  }} \bigchi_{\mu + \varepsilon/2}.
 \end{align}

\end{corollary}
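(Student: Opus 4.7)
The plan is to establish a sibling identity to the relative Pieri formula of Proposition \ref{relpierispin}, this time for the virtual character $\Delta':=\prod_{i=1}^n(x_i^{1/2}+x_i^{-1/2})=S^++S^-$, and then to isolate $\bigchi_\mu S^+$ and $\bigchi_\mu S^-$ by averaging it against the relative Pieri formula for $\Delta=S^+-S^-$. In other words, I would get one linear combination of $\bigchi_\mu S^\pm$ from the existing identity and a second, independent linear combination from the companion identity, and then solve the resulting $2\times 2$ system.

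First I would redo the column-by-column multilinear expansion carried out in the proof of Proposition \ref{relpierispin}, but starting from the identities
\begin{equation*}
(x_j^{l_i}\mp x_j^{-l_i})(x_j^{1/2}+x_j^{-1/2})=(x_j^{l_i+1/2}\mp x_j^{-(l_i+1/2)})+(x_j^{l_i-1/2}\mp x_j^{-(l_i-1/2)}).
\end{equation*}
The crucial difference from the relative Pieri computation is that no alternating sign $(-1)^{\varepsilon}$ appears, because both cross terms enter with a $+$ sign. Expanding by multilinearity of the determinant therefore yields
\begin{equation*}
\Delta'\,D^{\pm}(l)=\sum_{\varepsilon\in\{\pm 1\}^n} D^{\pm}(l+\varepsilon/2).
\end{equation*}
Specializing to $l=\mu+\rho_H$, adding the $D^-$ and $D^+$ versions, dividing by $D^+(\rho_H)$, and discarding summands with $\mu+\varepsilon/2$ non-dominant (killed by coincident rows, exactly as in the proof of Proposition \ref{relpierispin}) produces the companion identity
\begin{equation*}
\bigchi_\mu(S^++S^-)=\sum_{\substack{\varepsilon\in\{\pm 1\}^n\\ \mu+\varepsilon/2\text{ dominant}}}\bigchi_{\mu+\varepsilon/2}.
\end{equation*}

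To finish, I would combine this with the relative Pieri formula
\begin{equation*}
\bigchi_\mu(S^+-S^-)=\bigchi_\mu\Delta=\sum_{\substack{\varepsilon\in\{\pm 1\}^n\\ \mu+\varepsilon/2\text{ dominant}}}(-1)^{\varepsilon}\bigchi_{\mu+\varepsilon/2},
\end{equation*}
where $(-1)^{\varepsilon}=\prod_i\varepsilon_i$ equals $+1$ precisely when $\varepsilon$ has an even number of entries equal to $-1$. Taking half the sum isolates $\bigchi_\mu S^+$ as the sum of $\bigchi_{\mu+\varepsilon/2}$ over dominant $\varepsilon$ with an even number of $-1$'s, and taking half the difference isolates $\bigchi_\mu S^-$ as the corresponding sum over $\varepsilon$ with an odd number of $-1$'s, which is the content of the corollary.

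There is essentially no substantive obstacle; the one point requiring care is the parity bookkeeping, namely checking that the sign $\prod_i\varepsilon_i$ appearing in the relative Pieri formula coincides with the chirality parity separating $S^+$ from $S^-$, which is transparent from the product expansion $\Delta=\prod_i(x_i^{1/2}-x_i^{-1/2})$ and the fact that $(1/2,\dots,1/2)$ (i.e.\ $\varepsilon=(1,\dots,1)$) is the highest weight of $S^+$.
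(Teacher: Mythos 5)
Your proposal is correct and matches the paper's own proof: the paper likewise introduces $\Delta'=\prod_{i=1}^n(x_i^{1/2}+x_i^{-1/2})=S^++S^-$, derives its tensor product decomposition by the same determinantal (Okada-style) expansion used for Proposition \ref{relpierispin}, and combines it with the relative Pieri formula for $\Delta=S^+-S^-$ to separate the two parities. Your write-up simply fills in the details (the sign-free cross terms and the parity bookkeeping) that the paper leaves implicit.
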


\begin{proof}
Note that $S^+$ and $S^-$ are irreducible representations of $H$ corresponding to the weights $(\tfrac{1}{2},\tfrac{1}{2},\ldots,\tfrac{1}{2},\tfrac{1}{2})$ and $(\tfrac{1}{2},\tfrac{1}{2},\ldots,\tfrac{1}{2},-\tfrac{1}{2})$ respectively. Also $\Delta = S^+-S^-$ is a virtual character of $H$. Let the character $\Delta' = \prod_{i=1}^n (x_i^{1/2} + x_i^{-1/2})$, and we can see that $\Delta' = S^++S^-$. Using the method as in  proposition \ref{relpierispin}, one can prove its tensor product decomposition with any highest weight representation. Combining this with the result from the proposition gives a proof of the corollary. 
\end{proof}

\subsection{Branching formula.} We recall the notion of interlacing for the pair $Spin(2n+1),Spin(2n)$. Suppose $\lambda= (\lambda_1, \lambda_2  \ldots, \lambda_{n})$  (resp. $\nu = (\nu_1,\nu_2, \ldots, \nu_n)$) is a dominant weight of $Spin(2n+1)$ (resp. $Spin(2n)$). We say  that $\nu$ interlaces $\lambda$, denoted by $\nu \preceq\lambda$,  if $\lambda_1 \geq \nu_1 \geq \lambda_2 \geq \ldots \geq \nu_{n-1} \geq \lambda_n \geq |\nu_n|$.

We will now prove the following branching rule:
\begin{theorem}
   Suppose $\lambda$ is a dominant weight of $Spin(2n+1)$.
     \begin{equation}\label{eq_multiplicityspin}
     \bigchi_{\lambda}|_{H} = \sum_{\nu} m(\lambda,\nu) \bigchi_{\nu}, 
    \end{equation}
    where the sum is over the dominant weights $\nu$  of $Spin(2n)$. 
    
    Then $m(\lambda,\nu) =1$ if and only if $\nu \preceq \lambda$ , and zero otherwise.
\end{theorem}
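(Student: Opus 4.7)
The approach parallels the proof of the branching rule for $(GL(n+1),GL(n))$ given in Section~2. Substituting the expansion (\ref{eq_multiplicityspin}) of $\bigchi_{\lambda}|_H$ into the relative Weyl character formula (Proposition \ref{relativeweylchbn}) and cross-multiplying by the relative Weyl denominator $\Delta$ yields
\[
\sum_{\nu} m(\lambda,\nu)\, \bigchi_{\nu}\,\Delta \;=\; \bigchi_{\lambda^+} - \bigchi_{\lambda^-}.
\]
Expanding the left hand side by the relative Pieri formula (Proposition \ref{relpierispin}) and comparing coefficients of $\bigchi_{\mu}$ for each dominant weight $\mu$ of $H$ gives the key recursion
\[
\sum_{\substack{\varepsilon \in \{\pm 1\}^n \\ \mu - \varepsilon/2 \text{ dominant}}} (-1)^{\varepsilon}\, m(\lambda,\, \mu - \varepsilon/2) \;=\; \delta_{\mu,\lambda^+} - \delta_{\mu,\lambda^-}.
\]

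The plan is to use this recursion to determine the multiplicities inductively, mirroring the long-division strategy of the $GL(n)$ case. First I would prove a weak interlacing lemma analogous to Lemma \ref{lemma-almostinterlace}: if $m(\lambda,\nu)\neq 0$, then $\lambda_i + \tfrac{1}{2}\geq \nu_i \geq \lambda_{i+1}-\tfrac{1}{2}$ for $1\leq i\leq n-1$ and $\lambda_n + \tfrac{1}{2}\geq |\nu_n|$. This is immediate because every dominant $\mu$ with $\mu = \nu + \varepsilon/2$ that contributes to the recursion must itself appear on the right side (either as $\lambda^\pm$ or inductively as a strictly interlacing weight), and the condition follows by shifting each coordinate by at most $\tfrac{1}{2}$. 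The set of candidate $\nu$ is thereby confined to a finite window whose boundary contains $\lambda^\pm$ as the extremal points.

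Next, for each such $\nu$, I would count the signed number of patterns $\varepsilon' \in \{\pm 1\}^n$ for which $\nu + \varepsilon'/2$ strictly interlaces $\lambda$. Setting $M = \{i : \nu_i = \lambda_{i+1}-\tfrac{1}{2}\}$ and $M' = \{i : \nu_i = \lambda_i+\tfrac{1}{2}\}$ (with analogous bookkeeping at the last coordinate accounting for both choices of sign of $\nu_n$), the indices in $M\cup M'$ force specific values of $\varepsilon'_i$ while the remaining indices are free. This reduces the signed count to a binomial identity of the form $\sum_{k}(-1)^{k}\binom{r}{k}$ evaluating to $0$ or $\pm 1$, exactly as in Corollary \ref{corol-multiplicities}. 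The outcome is that the generic contribution is $-1$ when $\nu$ strictly interlaces $\lambda$, and that the contribution cancels otherwise except on the boundary where it matches either $\delta_{\mu,\lambda^+}$ or $-\delta_{\mu,\lambda^-}$.

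Finally, I would run the induction in a well-founded ordering on $\nu$ (for instance, decreasing lexicographic order, which terminates because the weak interlacing confines $\nu$ to a finite set), with base case at $\nu=(\lambda_1,\ldots,\lambda_n)$, giving $m(\lambda,\nu)=1$ precisely for $\nu\preceq\lambda$ and $0$ otherwise. The main obstacle will be the asymmetry in the last coordinate: since $\nu_n$ is allowed to be negative for $Spin(2n)$, the weak interlacing involves $|\nu_n|$ and the Pieri shifts $\varepsilon_n/2$ couple to the $\nu_n\mapsto -\nu_n$ symmetry of $D^+(\mu+\rho_H)$. Consequently, the boundary cases near $\lambda^+$ (where $\mu_n=\lambda_n+\tfrac{1}{2}$) and near $\lambda^-$ (where $\mu_n=-\lambda_n-\tfrac{1}{2}$) must be handled jointly, so that both extremal terms $\delta_{\mu,\lambda^+}$ and $-\delta_{\mu,\lambda^-}$ on the right hand side of the recursion are correctly absorbed by the boundary contributions of the binomial sum.
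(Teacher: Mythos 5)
Your overall strategy coincides with the paper's: the relative Weyl character formula (Proposition \ref{relativeweylchbn}), cross-multiplication by $\Delta$, the relative Pieri recursion (Proposition \ref{relpierispin}), and downward lexicographic induction with base case $\nu=\lambda$; the recursion you write down is correct. The genuine gap is the calibration of your ``weak interlacing'' step and of the sets $M,M'$. Every $\nu$ occurring in $\bigchi_{\lambda}|_H$ is congruent to $\lambda$ coordinatewise modulo $\mathbb{Z}$ (both integral or both half-integral), so your window $\lambda_i+\tfrac{1}{2}\geq \nu_i\geq \lambda_{i+1}-\tfrac{1}{2}$, $|\nu_n|\leq \lambda_n+\tfrac{1}{2}$ is equivalent to $\lambda_i\geq\nu_i\geq\lambda_{i+1}$ and $|\nu_n|\leq\lambda_n$, i.e.\ to the interlacing conclusion of the theorem itself; it is therefore not ``immediate,'' and the justification offered (each contributing $\mu$ ``must itself appear on the right side'') is invalid, since the left side of the recursion is a signed sum in which nonzero terms can cancel. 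For the same integrality reason your sets $M=\{i:\nu_i=\lambda_{i+1}-\tfrac{1}{2}\}$ and $M'=\{i:\nu_i=\lambda_i+\tfrac{1}{2}\}$ are empty, so the binomial count as you set it up degenerates.

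What the induction actually yields, when one extracts the coefficient of $\bigchi_{\nu+(\frac{1}{2},\ldots,\frac{1}{2})}$ (i.e.\ takes $\mu=\nu+(\frac{1}{2},\ldots,\frac{1}{2})$ in your recursion), is the window of Lemma \ref{lemma_spinalmostinterlace}: $\lambda_i\geq\nu_i\geq\lambda_{i+1}-1$ for $i\leq n-1$ and $\lambda_n\geq\nu_n\geq-\lambda_n-1$, because the lexicographically larger weights entering the signed sum are of the form $\nu+\varepsilon'$ with $\varepsilon'\in\{0,1\}^n$ --- a full unit of slack below and none above, not $\pm\tfrac{1}{2}$. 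The weights on the excluded lower boundary ($\nu_i=\lambda_{i+1}-1$ for some $i$, or $\nu_n=-\lambda_n-1$) are exactly the delicate ones: there the signed count is $(-1)^m$ rather than $0$ or $-1$ (Corollary \ref{corol_spinmultiplicities}), the only dominant weight with $m+m'=n$, $m>0$ is $(\lambda_1,\ldots,\lambda_{n-1},-\lambda_n-1)$, and its contribution must be cancelled against the term $-\bigchi_{\lambda^-}$ of the relative Weyl numerator to conclude $m(\lambda,\nu)=0$. You do anticipate that $\lambda^{\pm}$ must be absorbed at the boundary, but your first lemma as stated would already exclude those weights, so the plan is internally inconsistent; once the window and $M,M'$ are recalibrated as in Lemmas \ref{lemma_spinalmostinterlace}--\ref{lemma_spinmultiplicitycount}, the remainder of your outline is exactly the paper's proof.
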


The proof will proceed along the same lines as for $GL(n)$. 
    In the following since we are working with a fixed $\lambda$, we will use $m(\nu)$ instead of $m(\lambda,\nu)$.

    We will prove the theorem via ( decreasing ) induction on the weights with respect to the lexicographic ordering ($>_l$).  Multiplying the relative Weyl character formula (Proposition \ref{relativeweylchbn}) by the relative Weyl denominator, we get:
\begin{align}
     \bigchi_{\lambda^+}- \bigchi_{\lambda^-} &= \bigchi_{\lambda}|_{H}  \Delta \nonumber \\
     &=  \sum_{\nu} m(\nu) (\bigchi_{\nu}  \Delta) \nonumber \\
     &= \sum_{\nu} m(\nu) \left(\sum_{\varepsilon\in \{\pm 1\}^n} (-1)^{\varepsilon} \bigchi_{\nu + \varepsilon/2}\right),\label{eq-relweylpierispin}
    \end{align}
where we use the relative Pieri formula (Proposition \ref{relpierispin} ) in the last equality.

    Suppose $\mu$ is the largest in the lexicographic ordering  such that $m(\mu) >0$. By the relative Pieri formula, $\bigchi_{\mu} \Delta $ has $\bigchi_{\mu + (\frac{1}{2},\frac{1}{2},\ldots,\frac{1}{2})}$ as sub-representation for which the highest weight is largest in lexicographic ordering. Comparing it in equation (\ref{eq-relweylpierispin}) with the largest in lexicographic ordering gives us $\mu = \lambda^+ - (\tfrac{1}{2},\tfrac{1}{2},\ldots,\tfrac{1}{2}) =\lambda$ and $m(\mu) = m(\lambda)=1$. 

    Let $\nu <_l \lambda$ be a dominant weight for $H$ . For our induction hypothesis, we assume that all weights  $\nu'$ such that $\nu' >_l \nu$ satisfy the hypothesis in the theorem. We can rewrite equation (\ref{eq_multiplicityspin}) as:

\begin{equation}\label{eq_inductionspin}
    \bigchi_{\lambda}|_{H} = \sum_{\substack{\nu' >_l \nu \\ \nu \preceq \lambda}} \bigchi_{\nu'} +m(\nu) \bigchi_{\nu} +\sum_{\mu <_l \nu}m(\mu) \bigchi_{\mu} .
\end{equation}

We would like to understand the multiplicity $m(\nu)$ for the highest weight $\nu$. We now multiply the above equation by $\Delta$ and use relative Pieri formula. The middle term in the right hand side contributes a term $m(\nu)\bigchi_{\nu + (\frac{1}{2},\frac{1}{2},\ldots,\frac{1}{2})}$. 
If $\mu <_l \nu$, then there does not exist any $\varepsilon\in \{\pm 1\}^n$ such that $\mu +\varepsilon/2 = \nu + (\frac{1}{2},\frac{1}{2},\ldots,\frac{1}{2})$. This is equivalent to saying that the third sum on the right side does not contribute to $\bigchi_{\nu + (\frac{1}{2},\frac{1}{2},\ldots,\frac{1}{2})}$. Hence the only possible contribution to $\bigchi_{\nu + (\frac{1}{2},\frac{1}{2},\ldots,\frac{1}{2})}$ comes from the first two terms. Multiplying the first term on the right side in the above equation by $\Delta$ and using relative Pieri formula gives us
\begin{equation}
 \sum_{\substack{\nu' >_l \nu \\ \nu \preceq \lambda}} \bigchi_{\nu'} \Delta = \sum_{\substack{\nu' >_l \nu \\ \nu \preceq \lambda}} \left(\sum_{\substack{\varepsilon\in \{\pm 1\}^n \\ \nu' + \varepsilon/2 \text{ dominant}}} (-1)^{\varepsilon} \bigchi_{\nu' + \varepsilon/2} \right).
\end{equation}

To understand the multiplicity $m(\nu)$, we need to know the multiplicity say $n(\nu)$ of  $\bigchi_{\nu + (\frac{1}{2},\frac{1}{2},\ldots,\frac{1}{2})}$ in the above expression. To do this, we need to understand when $\nu' + \varepsilon/2 = \nu + (\frac{1}{2},\frac{1}{2},\ldots,\frac{1}{2})$ in the above expression. Note that this can be seen as $\nu' = \nu + \varepsilon'$, where $\varepsilon' = (\frac{1}{2},\frac{1}{2},\ldots,\frac{1}{2}) -\varepsilon \in \{ 0,1\}^n$. One can see that $\varepsilon'_i =0$ if $\varepsilon_i=1$, and $\varepsilon'_i =1$ if $\varepsilon_i=-1$, so we have that $(-1)^{\varepsilon'}= (-1)^{\varepsilon}$ and $|\varepsilon'|>0$ ( as $\nu' >_l\nu$). In this section , for any $\varepsilon \in \{ \pm 1\}^n$, define $\varepsilon' = (\frac{1}{2},\frac{1}{2},\ldots,\frac{1}{2}) - \varepsilon/2$. The rest of the proof is similar to the case of $GL(n)$.

We first show that $\nu$ weakly interlaces $\lambda$ in the following sense:

\begin{lemma}\label{lemma_spinalmostinterlace}
Let $\nu$ be such that $\nu'  = \nu + \varepsilon'$ interlaces $\lambda$ for some  $\varepsilon' \in \{0,1\}^n,$ with $|\varepsilon'|>0.$  Then  

\begin{equation} \label{eq_spinweaklyinterlace}
\lambda_i \geq \nu_i \geq \lambda_{i+1}-1 \text{ if } 1\leq i \leq n-1 \, \text{ and } \lambda_n \geq \nu_n \geq -\lambda_n-1.
\end{equation}.
\end{lemma}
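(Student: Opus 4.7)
The plan is to mimic the proof of Lemma \ref{lemma-almostinterlace} from the $GL(n)$ section, with the only new ingredient being the special form of the interlacing condition at the $n$-th coordinate (which involves $|\nu_n|$ rather than a one-sided bound). Since $\varepsilon'_i \in \{0,1\}$, I will bound $\nu_i = \nu'_i - \varepsilon'_i$ from above and below by $\nu'_i$ and $\nu'_i - 1$ respectively, and then feed in the interlacing inequalities $\nu' \preceq \lambda$.

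In more detail: the hypothesis $\nu' = \nu + \varepsilon' \preceq \lambda$ unpacks as
\[
\lambda_1 \geq \nu'_1 \geq \lambda_2 \geq \cdots \geq \nu'_{n-1} \geq \lambda_n \geq |\nu'_n|.
\]
For $1 \leq i \leq n-1$, the chain $\lambda_i \geq \nu'_i \geq \lambda_{i+1}$ together with $\nu'_i = \nu_i + \varepsilon'_i$ and $\varepsilon'_i \in \{0,1\}$ immediately gives $\lambda_i \geq \nu_i + \varepsilon'_i \geq \nu_i$ (using $\varepsilon'_i \geq 0$) and $\nu_i = \nu'_i - \varepsilon'_i \geq \lambda_{i+1} - 1$ (using $\varepsilon'_i \leq 1$).

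For the boundary coordinate $i = n$, I expand $\lambda_n \geq |\nu'_n|$ as the pair $\lambda_n \geq \nu'_n$ and $\nu'_n \geq -\lambda_n$. Substituting $\nu'_n = \nu_n + \varepsilon'_n$ with $\varepsilon'_n \in \{0,1\}$ yields $\lambda_n \geq \nu_n$ and $\nu_n \geq -\lambda_n - \varepsilon'_n \geq -\lambda_n - 1$, which is exactly the required $\lambda_n \geq \nu_n \geq -\lambda_n - 1$.

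There is no real obstacle here; the argument is a one-line substitution on each coordinate, and the hypothesis $|\varepsilon'| > 0$ is not used (it is recorded in the statement only because this lemma is applied in a setting where $\nu \neq \nu'$). The only conceptual point worth flagging is that on the last coordinate, the sign-invariance of the $D^+$ part of the Weyl numerator for $Spin(2n)$ is what makes the two-sided bound $\lambda_n \geq \nu_n \geq -\lambda_n - 1$ the natural analog of the one-sided bound $\lambda_i \geq \nu_i \geq \lambda_{i+1} - 1$ that appeared in the $GL(n+1)$ case.
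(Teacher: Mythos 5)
Your proof is correct and is essentially the same as the paper's: both combine the coordinatewise interlacing inequalities $\lambda_i \geq \nu'_i \geq \lambda_{i+1}$ (and $\lambda_n \geq |\nu'_n|$ at the last coordinate) with $\nu'_i = \nu_i + \varepsilon'_i$, $\varepsilon'_i \in \{0,1\}$, to get the two-sided bounds. Your observation that $|\varepsilon'|>0$ is not actually needed here also matches the paper, which makes no use of it in this lemma.
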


\begin{proof}
Given that $\nu'_i=\nu_i + \varepsilon'_i $, and as $\nu' \preceq \lambda$, we get that  $\lambda_i \geq \nu'_i \geq \lambda_{i+1}$ for $1 \leq i \leq n-1$ and $\lambda_n \geq |\nu'_n|$. Putting the two of them together, we get that $\lambda_i \geq \nu_i \geq \lambda_{i+1}-1$  if $ 1\leq i \leq n-1$ and $ \lambda_n \geq \nu_n \geq -\lambda_n-1$.
\end{proof}

Define $M =\{ i \mid \nu_i = \lambda_{i+1}-1 \text { for } 1 \leq i \leq n-1\} \cup \{n \mid \nu_n = -\lambda_n-1 \}$ and
$M' = \{ i \mid \nu_i =\lambda_i\}$. Let $m = |M|$ and $m'=|M'|$. We first observe

\begin{lemma}\label{lemma_spinepsilonrestricted}
Let $\nu$ satisfy the conclusion of Lemma \ref{lemma_spinalmostinterlace}, i.e. $\lambda_i \geq \nu_i \geq \lambda_{i+1}-1$ for $1 \leq i \leq n-1$ and $\lambda_n \geq \nu_n \geq -\lambda_n-1$. Suppose there exists an $\varepsilon' \in \{0,1\}^n$ such that $\nu' = \nu + \varepsilon'$ interlaces $\lambda$. Then 
$$m \leq |\varepsilon'| \leq n- m'.$$
\end{lemma}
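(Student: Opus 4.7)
The plan is to prove this lemma in direct parallel with the general linear case (Lemma \ref{lemma-epsilonrestricted}), the only novelty being the treatment of the last coordinate, where the interlacing constraint is $\lambda_n \geq |\nu'_n|$ rather than a two-sided interval. The idea is to show that the indices in $M$ force a $1$ in $\varepsilon'$ and the indices in $M'$ force a $0$, after which the bounds on $|\varepsilon'|$ follow immediately.

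First I would treat the indices $1\leq i \leq n-1$. For $i \in M$ with $i \leq n-1$, we have $\nu_i = \lambda_{i+1}-1$, and the interlacing condition $\nu'_i \geq \lambda_{i+1}$ forces $\varepsilon'_i = \nu'_i - \nu_i \geq 1$; since $\varepsilon'_i \in \{0,1\}$, we conclude $\varepsilon'_i=1$. Symmetrically, for $i \in M'$ with $i \leq n-1$, we have $\nu_i = \lambda_i$, and the condition $\nu'_i \leq \lambda_i$ forces $\varepsilon'_i \leq 0$, hence $\varepsilon'_i=0$.

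Next I would handle the index $n$. If $n \in M$, then $\nu_n = -\lambda_n - 1$; the interlacing condition requires $\nu'_n \geq -\lambda_n$, so $\varepsilon'_n \geq 1$, forcing $\varepsilon'_n=1$. If $n \in M'$, then $\nu_n = \lambda_n$; the condition $\nu'_n \leq \lambda_n$ forces $\varepsilon'_n \leq 0$, hence $\varepsilon'_n=0$. One should also observe that $M\cap M'=\emptyset$: for $i\leq n-1$ this follows from $\lambda_i \geq \lambda_{i+1}$, and for $i=n$ it would force $\lambda_n = -\lambda_n-1$, which is impossible since $\lambda_n\geq 0$. Consequently $m+m' \leq n$.

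Finally, putting these together, at least $m$ coordinates of $\varepsilon'$ are forced to equal $1$ and at least $m'$ are forced to equal $0$, so $|\varepsilon'| \geq m$ and $n-|\varepsilon'| \geq m'$, which is exactly the claimed inequality $m \leq |\varepsilon'| \leq n-m'$. I do not expect any real obstacle here; the only thing to be careful about is the sign-sensitive condition at the $n$-th coordinate, which is precisely why $M$ and $M'$ had to be defined with the special case at $i=n$.
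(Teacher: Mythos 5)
Your proof is correct and follows essentially the same argument as the paper: indices in $M$ force $\varepsilon'_i=1$ and indices in $M'$ force $\varepsilon'_i=0$, giving $m\leq|\varepsilon'|\leq n-m'$. Your explicit handling of the $i=n$ coordinate and the disjointness of $M$ and $M'$ only makes precise what the paper leaves implicit.
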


\begin{proof}
Given $\nu' = \nu + \varepsilon'$ where $\nu' \preceq \lambda$. For $i \in M$, $\nu_i +1 = \lambda_{i+1}$, which forces $\nu'_i =\lambda_{i+1}$ and $\varepsilon'_i =1$. Similarly, if $i \in M'$, we have that $\nu_i = \lambda_i $, which forces $\nu'_i =\lambda_{i}$ and $\varepsilon'_i =0$. Hence out of the $|\varepsilon'|$ many 1's, $m$ of them are fixed, and out of the $n - |\varepsilon'|$ many 0's, $m'$ of them are fixed. Hence if $|\varepsilon'|<m$ or $n-|\varepsilon'|<m'$, then there is no possible $\varepsilon'$ satisfying the requirements of the hypothesis. If $m \leq |\varepsilon'| \leq n- m'$, then we can get a $\varepsilon'=\nu'-\nu$ satisfying the hypothesis. 
\end{proof}

We now count the number of ways $\nu$ can be modified by $\varepsilon' \in \{0,1 \}^n$, such that  $\nu'=\nu+\varepsilon'$ interlaces $\lambda$.

\begin{lemma}\label{lemma_spinmultiplicitycount}
Given a weight $\nu$ and a fixed $k>0$, the multiplicity of $\bigchi_{\nu + (\frac{1}{2},\frac{1}{2},\ldots,\frac{1}{2})}$  in  $\displaystyle \sum_{ \substack{\varepsilon \in \{ \pm 1\}^n, |\varepsilon'|=k\\ \nu' \preceq \lambda}}  \bigchi_{\nu'+\varepsilon/2}$  is given by 
$$ \binom{n-m'-m}{|\varepsilon'|-m}, $$
if $\nu$ weakly interlaces $\lambda$ (equation(\ref{eq_spinweaklyinterlace})) and zero otherwise, where ${\binom{l}{s}}$ denotes the standard binomial coefficient.

\end{lemma}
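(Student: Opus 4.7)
The plan is to mirror the argument of Lemma \ref{lemma1} in the $GL(n+1)/GL(n)$ case, exploiting the bijection $\varepsilon \leftrightarrow \varepsilon' = (\tfrac{1}{2},\ldots,\tfrac{1}{2})-\varepsilon/2$ that was set up in the paragraph preceding the statement. First, I would reformulate the counting problem: the character $\bigchi_{\nu'+\varepsilon/2}$ contributes to the multiplicity of $\bigchi_{\nu+(\frac{1}{2},\ldots,\frac{1}{2})}$ precisely when $\nu'+\varepsilon/2 = \nu+(\tfrac{1}{2},\ldots,\tfrac{1}{2})$, i.e., when $\nu' = \nu+\varepsilon'$ with $\varepsilon'\in\{0,1\}^n$ and $|\varepsilon'|=k$. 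Thus the desired multiplicity equals the number of $\varepsilon'\in\{0,1\}^n$ with $|\varepsilon'|=k$ such that $\nu+\varepsilon'$ is a dominant weight of $Spin(2n)$ interlacing $\lambda$.

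Next, I would handle the ``otherwise zero'' clause. If such an $\varepsilon'$ exists (and since $k>0$, we have $|\varepsilon'|>0$), then Lemma \ref{lemma_spinalmostinterlace} immediately forces $\nu$ to satisfy the weak interlacing inequalities (\ref{eq_spinweaklyinterlace}). Conversely, if $\nu$ does not weakly interlace $\lambda$, no valid $\varepsilon'$ exists and the multiplicity is zero, matching the claim.

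Assuming now that $\nu$ weakly interlaces $\lambda$, I would invoke Lemma \ref{lemma_spinepsilonrestricted} to show that the coordinates of $\varepsilon'$ at the index sets $M$ and $M'$ are completely determined: for $i\in M$, the requirement $\nu_i+\varepsilon'_i \geq \lambda_{i+1}$ (or, when $i=n$, $|\nu_n+\varepsilon'_n|\leq\lambda_n$) forces $\varepsilon'_i=1$, while for $i\in M'$ the requirement $\nu_i+\varepsilon'_i\leq\lambda_i$ forces $\varepsilon'_i=0$. Consequently $m\leq|\varepsilon'|=k\leq n-m'$, and otherwise the count is zero.

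It remains to observe that at the $n-m-m'$ indices outside $M\cup M'$, the weak interlacing already guarantees $\lambda_i>\nu_i>\lambda_{i+1}-1$ (respectively $\lambda_n>\nu_n>-\lambda_n-1$ when $n\notin M\cup M'$), so either value $\varepsilon'_i\in\{0,1\}$ is admissible at these indices without violating interlacing. Since exactly $k-m$ of these remaining coordinates must be set to $1$, the number of admissible $\varepsilon'$ equals $\binom{n-m-m'}{k-m}=\binom{n-m-m'}{|\varepsilon'|-m}$, completing the count. I expect the only subtle point to be the boundary case $n\in M$ (where $\nu_n=-\lambda_n-1$ and $\varepsilon'_n=1$ forces $\nu'_n=-\lambda_n$), which should be checked to confirm it is handled uniformly by the same argument as the interior indices; otherwise the combinatorics is essentially identical to the $GL$ case already treated.
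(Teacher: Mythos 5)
Your proposal is correct and follows essentially the same route as the paper: reduce the multiplicity count to counting $\varepsilon'\in\{0,1\}^n$ with $\nu+\varepsilon'$ interlacing $\lambda$, use Lemma \ref{lemma_spinalmostinterlace} for the vanishing outside weak interlacing, Lemma \ref{lemma_spinepsilonrestricted} to fix the coordinates on $M$ and $M'$, and then choose the remaining $|\varepsilon'|-m$ ones freely among the $n-m-m'$ unconstrained indices. Your explicit check of the last coordinate (the $|\nu_n|\leq\lambda_n$ condition, including $n\in M$) is a detail the paper leaves implicit, but it is handled exactly as you describe.
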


\begin{proof}
For such a $\nu$, we want to count how many $\varepsilon' \in \{0,1 \}^n$ such that $\nu' = \nu+ \varepsilon'$ interlaces $\lambda$.

By Lemma \ref{lemma_spinalmostinterlace}, we get that multiplicity of $\bigchi_{\nu}$ is non zero only if  $\lambda_i \geq \nu_i \geq \lambda_{i+1}-1$ for $1 \leq i \leq n-1$ and $\lambda_n \geq \nu_n \geq -\lambda_n-1$.

We are now reduced to the case $\lambda_i \geq \nu_i \geq \lambda_{i+1}-1$ for $1 \leq i \leq n-1$ and $\lambda_n \geq \nu_n \geq -\lambda_n-1$.

By Lemma \ref{lemma_spinepsilonrestricted}, we get that if $|\varepsilon'|<m$ or $n-|\varepsilon'|<m'$, then the multiplicity of $\bigchi_{\nu+ (\frac{1}{2},\frac{1}{2},\ldots,\frac{1}{2})}$ is zero. As in the generic case, we can now freely choose $|\varepsilon'|-m$ indices amongst  $\{ 1,2, \ldots,n\} \backslash (M \cup M')$ positions. This gives the multiplicity as 
$${\binom{n-m-m'}{ |\varepsilon'|-m}} = {\binom{n-m-m'}{n- |\varepsilon'|-m'}}.$$

\end{proof}

\begin{corollary}\label{corol_spinmultiplicities}
Given $\nu$  that weakly interlaces $\lambda$ (equation (\ref{eq_spinweaklyinterlace})), the multiplicity $n(\nu)$ of $\bigchi_{\nu+(\frac{1}{2},\frac{1}{2},\ldots,\frac{1}{2})}$ in 
$$\sum_{k=1}^n (-1)^{|\varepsilon'|} \left(\displaystyle \sum_{ \substack{\varepsilon \in \{ \pm 1\}^n, |\varepsilon'|=k\\ \nu' \preceq \lambda}}  \bigchi_{\nu'+\varepsilon/2} \right)=
\begin{cases}
    \displaystyle -1  & \quad \text{if } m=0\\
    (-1)^{m} & \quad \text{if } m+m' =n,m>0\\
   0  & \quad \text{otherwise.}
  \end{cases}
$$
\end{corollary}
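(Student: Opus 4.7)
The approach mirrors the proof of Corollary \ref{corol-multiplicities} in the $GL(n)$ setting. The plan is to use Lemma \ref{lemma_spinmultiplicitycount} to turn $n(\nu)$ into a signed binomial sum, then evaluate that sum by the binomial theorem together with a boundary correction depending on whether the term $k=0$ is to be excluded.

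First I would observe that under the bijection $\varepsilon \leftrightarrow \varepsilon' = (\tfrac12,\ldots,\tfrac12) - \varepsilon/2$ between $\{\pm 1\}^n$ and $\{0,1\}^n$, the sign satisfies $(-1)^{\varepsilon} = \prod_i \varepsilon_i = (-1)^{|\varepsilon'|}$. By Lemma \ref{lemma_spinmultiplicitycount}, assuming $\nu$ weakly interlaces $\lambda$, for each fixed $k = |\varepsilon'|$ the coefficient of $\bigchi_{\nu+(\frac12,\ldots,\frac12)}$ coming from the inner sum is $\binom{n-m-m'}{k-m}$ (with the convention that this is $0$ outside $m \leq k \leq n-m'$). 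Combining and then substituting $j = k - m$ yields
\[
n(\nu) \;=\; \sum_{k=1}^{n} (-1)^{k}\binom{n-m-m'}{k-m} \;=\; (-1)^{m}\sum_{\substack{0 \leq j \leq n-m-m' \\ j+m \geq 1}}(-1)^{j}\binom{n-m-m'}{j}.
\]

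I would then split into the three cases of the statement. If $m > 0$ and $m + m' < n$, the constraint $j + m \geq 1$ is automatic, so the full alternating sum equals $(-1)^m(1-1)^{n-m-m'} = 0$. If $m > 0$ and $m + m' = n$, only the term $j = 0$ survives, giving $(-1)^m$. If $m = 0$ (so necessarily $m' < n$, since otherwise $\nu = \lambda$, which is excluded by the strict inequality $\nu <_l \lambda$ in the inductive setup), one must remove the $j=0$ contribution from the complete alternating sum, obtaining $(1-1)^{n-m'} - 1 = -1$.

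I do not anticipate a serious obstacle here, since Lemma \ref{lemma_spinmultiplicitycount} already encapsulates the combinatorial content and the remaining manipulation is a standard application of the binomial identity. The only points that require attention are (i) correctly carrying the sign $(-1)^{\varepsilon} = (-1)^{|\varepsilon'|}$ through the bijection, and (ii) tracking the constraint $|\varepsilon'| \geq 1$ after the reindexing $j = k-m$, since it distinguishes the generic case $m=0$ from the case $m > 0$. Both are mechanical once the correspondence is set up.
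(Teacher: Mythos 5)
Your proposal is correct and follows essentially the same route as the paper: it converts $n(\nu)$ into the signed binomial sum $\sum(-1)^{|\varepsilon'|}\binom{n-m-m'}{|\varepsilon'|-m}$ via Lemma \ref{lemma_spinmultiplicitycount} (with Lemmas \ref{lemma_spinalmostinterlace} and \ref{lemma_spinepsilonrestricted} controlling the range) and then evaluates it by the binomial identity in the same three cases. Your explicit handling of the excluded case $m=0$, $m'=n$ (i.e.\ $\nu=\lambda$) is a minor point the paper leaves implicit, but it does not change the argument.
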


\begin{proof}
We want to count $\varepsilon\in \{0,1 \}^n$ such that $\nu + (\frac{1}{2},\frac{1}{2},\ldots,\frac{1}{2}) = \nu' + \varepsilon/2$, which is equivalent to counting $\varepsilon'\in \{0,1 \}^n$ such that $\nu'  = \nu + \varepsilon'$. By Lemmas \ref{lemma_spinalmostinterlace} , \ref{lemma_spinepsilonrestricted} and \ref{lemma_spinmultiplicitycount}, we get that the multiplicity of $\bigchi_{\nu+(\frac{1}{2},\frac{1}{2},\ldots,\frac{1}{2})}$ is given by 
$$\sum_{\substack{m \leq |\varepsilon'| \leq n-m'\\ |\varepsilon'|>0}} (-1)^{|\varepsilon'|} {\binom{n-m-m'}{|\varepsilon'|-m}}. $$

When $m>0$ and $ n=m+m'$, the only term in the above sum is when $|\varepsilon'|=m$.  Hence the sum is equal to $(-1)^m$.

 When $m>0$, upto a sign, the above sum is equal to $(1-1)^{n-m-m'}$ and hence vanishes.

When $m=0$, the above expression differs from $(1-1)^{n-m'}$ by $1$, hence the sum is equal to $-1$.

This covers all the cases, hence proves the corollary.

\end{proof}

\subsubsection{Proof of Theorem \ref{thmbranchingbn}}\quad \\We have the relative Weyl numerator. 
$$\bigchi_{\lambda}|_{H} \Delta= \bigchi_{\lambda^+} - \bigchi_{\lambda^-}.$$
We see that 
$$\lambda^- - (\tfrac{1}{2},\tfrac{1}{2},\ldots,\tfrac{1}{2}) = (\lambda_1, \ldots,\lambda_{n-1}, -\lambda_n-1)$$
satisfies equation (\ref{eq_spinweaklyinterlace}). Since $n(\nu)$ vanishes if $\nu$ does not satisfy equation (\ref{eq_spinweaklyinterlace}), it follows that $m(\nu)$ vanishes for such weights.

Suppose $\nu = \lambda^- - (\tfrac{1}{2},\tfrac{1}{2},\ldots,\tfrac{1}{2})$ , then we get that $m=1$ and $m'=n-1$, hence $n(\nu)=-1$. Also the multiplicity with which $\bigchi_{\lambda^-}$ occurs in the Weyl numerator is $-1$. As this is equal to $m(\nu) + (-1)$, we get that $m(\nu)=0$.


Suppose $\nu$ satisfies equation (\ref{eq_spinweaklyinterlace}) and $\nu + (\tfrac{1}{2},\tfrac{1}{2},\ldots,\tfrac{1}{2}) \neq \lambda^-$. Since $n(\nu)=-1$ precisely when $\nu$ interlaces $\lambda$, it follows that $m(\nu)=1$ if and only if $\nu$ interlaces $\lambda$.

This completes the proof of Theorem \ref{thmbranchingbn}.

\section{Branching rules for $(Sp(2n) , Sp(2)\times Sp(2n-2))$}

In this section, we will give a proof of Theorem \ref{thmbranchingcn}, the `classical' branching rule for symplectic groups from $G=Sp(2n)$ to $H=Sp(2n-2)\times Sp (2)$. Unlike the previous two cases, where the restrictions were multiplicity free, in the case of symplectic the  branching multiplicities are more difficult to get at.  We follow the approach of Yacobi and Wallach (\cite{wallach2009multiplicity}), describing the branching multiplicities in terms of  dimensions of $SL(2)=Sp(2)$ representations. 

Our method is similar in outline as in the linear and orthogonal cases. We first get a relative Weyl character formula. We then derive a relative Pieri formula, where we decompose the tensor of a representation of $H$ with the relative Weyl denominator. However in the proof of branching, we no longer have simple identities like $t^kt^l=t^{k+l}$, which we used while carrying out long division in the linear case, where $t$ is the polynomial parameter coming from $GL(1)$.  In the symplectic case, if we consider the coefficients as representations of $SL(2)$, we do not have simple equations like $S^{(k)}S^{(l)}=S^{(k+l)}$, where $S^{(k)}$ denotes the character of the $k^{th}$ symmetric power of the standard representation. In place of the simple identities, we prove some cancellative identities involving representations of $SL(2)$, which allows us to derive the branching multiplicities from the relative Pieri.


As in the previous section, our notations follow the convention of  \cite[Chapter 24]{fulton2013representation}.  We would work with  $(G,H) =(Sp(2n), Sp(2)\times Sp(2n-2))$, where one can see that $H$ is a regular subgroup of $G$.

\subsection{Relative Weyl character formula} We recall the Weyl character formula for $Sp(2n)$:

The dominant weights for $G$ are represented by $\lambda = (\lambda_1,\lambda_2, \ldots, \lambda_n),$ 
 where $\lambda_1 \geq \lambda_2 \geq \cdots \geq \lambda_n \geq 0$ and  $\lambda_i$ are all integers. The dominant weights for $H$ are given by a pair $(k, \eta)$, where $k$ is a non-negative integer and $\eta$ is a dominant weight for $Sp(2n-2)$. Denote by $S^{(k)} \bigchi_{\eta}$ the character of the irreducible highest weight representation corresponding to the highest weight $(k, \eta)$. This corresponds to the outer tensor product of the symmetric $k^{th}$ power of the standard representation of $Sp(2)$ with the highest weight representation of $Sp(2n-2)$ corresponding to $\eta$.

 The character for the irreducible representation corresponding to the highest weight  $\lambda = (\lambda_1,\lambda_2,\ldots, \lambda_n)$ is given by, 

\begin{equation}\label{eq:weylchformulasymplectic}
\bigchi_{\lambda} = \frac{\det|x_j^{\lambda_i +n-i+1} - x_j^{-(\lambda_i +n-i+1)}|}{\det|x_j^{n-i+1} - x_j^{-(n-i+1)}|}.
\end{equation}

\begin{proposition}{(Relative Weyl character formula)}
Let $\lambda = (\lambda_1, \lambda_2, \ldots, \lambda_{n})$ be a dominant weight for $G=Sp(2n)$ and $H= Sp(2)\times Sp(2n-2))$. Let $\bigchi_{\lambda}$ be the irreducible highest weight representation of $G$ with highest weight $\lambda$. We have the following expression for restriction of $\bigchi_{\lambda}$:
\begin{equation}\label{eq:relweylchformulagsympl}
\bigchi_{\lambda}|_{H}= \frac{\displaystyle\sum_{i=0}^{n-1} (-1)^{i} S^{(\lambda_{n-i}+i)}\bigchi_{\lambda^{(n-i)}}}{\displaystyle\sum_{i=0}^{n-1} (-1)^{i} S^{(i)}\bigchi_{\omega_{n-i-1}}}.
\end{equation}
where 
\begin{itemize}
    \item $S^{(k)}$ is the symmetric $k^{th}$ power of the standard representation of $Sp(2)$,
    \item $\omega_k=(1,1,1,\ldots,1,0,\ldots ,0)$ ($k$ many 1's) corresponds to the $k^{th}$ fundamental weight for $Sp(2n-2)$, 
    \item $\lambda^{(i)} = (\lambda_1+1,\lambda_2+1, \ldots, \lambda_{i-1}+1, \lambda_{i+1}, \ldots, \lambda_{n})$ are dominant weights of $Sp(2n-2)$.
\end{itemize}
\end{proposition}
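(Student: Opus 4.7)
The plan is to mirror the cofactor-expansion argument used for $(GL(n{+}1), GL(n){\times}GL(1))$ in Section 2, now applied to the determinantal Weyl character formula (\ref{eq:weylchformulasymplectic}) for $Sp(2n)$. I identify the maximal torus of $G$ with that of $H$ by letting $x_1, \ldots, x_{n-1}$ parametrize the maximal torus of the $Sp(2n-2)$-factor and $t = x_n$ parametrize the maximal torus of the $Sp(2)$-factor; with this choice the identity $t^{k+1} - t^{-(k+1)} = (t - t^{-1})\, S^{(k)}$ converts each ``last-column'' factor into an $Sp(2)$-character. The key step is to expand both the numerator and denominator determinants of $\bigchi_{\lambda}$ by cofactors along the column $j = n$, and then to recognize the resulting $(n-1)\times(n-1)$ minors (in the remaining variables $x_1,\ldots,x_{n-1}$) as $Sp(2n-2)$-Weyl numerators and denominators.

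For the numerator determinant, the $(i,n)$-entry is $(t-t^{-1})\,S^{(\lambda_i+n-i)}$, and the $(i,n)$-minor has rows with exponents $\lambda_k + n - k + 1$ for $k < i$ and $\lambda_{k+1} + n - k$ for $k \geq i$. Matching these against the standard shape $\mu_k + n - k$ for a dominant weight $\mu$ of $Sp(2n-2)$ identifies $\mu = \lambda^{(i)}$ exactly as defined in the statement, so the minor equals $\bigchi_{\lambda^{(i)}}\cdot D_{2n-2}$, where $D_{2n-2}$ denotes the $Sp(2n-2)$-Weyl denominator. A parallel calculation on the denominator gives $(i,n)$-entry $(t-t^{-1})\,S^{(n-i)}$ and $(i,n)$-minor with exponents $\{n,n-1,\ldots,1\}\setminus\{n-i+1\}$, which corresponds to $\omega_{i-1}+\rho_{Sp(2n-2)}$, so that minor equals $\bigchi_{\omega_{i-1}}\cdot D_{2n-2}$. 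Assembling these yields
\[
\det\bigl|x_j^{\lambda_i+n-i+1}-x_j^{-(\lambda_i+n-i+1)}\bigr| \;=\; (t-t^{-1})\,D_{2n-2}\sum_{i=1}^{n}(-1)^{i+n}\,S^{(\lambda_i+n-i)}\,\bigchi_{\lambda^{(i)}},
\]
\[
\det\bigl|x_j^{n-i+1}-x_j^{-(n-i+1)}\bigr| \;=\; (t-t^{-1})\,D_{2n-2}\sum_{i=1}^{n}(-1)^{i+n}\,S^{(n-i)}\,\bigchi_{\omega_{i-1}}.
\]

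Dividing the two expressions cancels the common factor $(t-t^{-1})\,D_{2n-2}$, and the reindexing $j := n-i$ turns $(-1)^{i+n}$ into $(-1)^{j}$ and produces exactly the stated formula (\ref{eq:relweylchformulagsympl}). The two minor checks needed are that $\lambda^{(i)}$ is a dominant $Sp(2n-2)$-weight (immediate from $\lambda_{i-1}+1\geq \lambda_{i+1}$ and $\lambda_n\geq 0$) and the bookkeeping of the cofactor sign, both entirely routine. I do not anticipate any real obstacle here: the argument is formally identical to the $GL$-case of Section 2, the only new ingredient being the character identity $t^{k+1}-t^{-(k+1)}=(t-t^{-1})\,S^{(k)}$, which is precisely why $Sp(2)$-representations $S^{(k)}$ (rather than scalars $t^{k}$) appear as the coefficients in the relative Weyl character formula for the symplectic pair.
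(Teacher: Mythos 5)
Your proposal is correct and follows essentially the same route as the paper: a cofactor expansion of the determinantal Weyl character formula (\ref{eq:weylchformulasymplectic}) along the column of the distinguished $Sp(2)$-variable, using $t^{k+1}-t^{-(k+1)}=(t-t^{-1})S^{(k)}$ and recognizing the minors as $Sp(2n-2)$ Weyl numerators, then dividing by the Weyl denominator of $H$. The only difference is cosmetic — you take $t=x_n$ and expand along the last column, whereas the paper uses $x_1$ as the $Sp(2)$-variable and expands along the first column — and your sign and reindexing bookkeeping checks out.
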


\begin{proof}
We do a co-factor expansion along the first column from the bottom in the numerator and denominator of (\ref{eq:weylchformulasymplectic}). We consider $x_1$-variable for $Sp(2)$ which gives us: 
\[
\begin{split}
    \bigchi_{\lambda}|_{H}& =  \frac{\displaystyle\sum_{i=0}^{n-1} (-1)^{i} (x_1^{\lambda_{n-1}+i+1} - x_1^{-(\lambda_{n-1}+i+1)})\bigchi_{\lambda^{(n-i)}}}{\displaystyle\sum_{i=0}^{n-1} (-1)^{i} (x_1^{i+1} - x_1^{-(i+1)})\bigchi_{\omega_{n-i-1}}}\\
    &=   \frac{\displaystyle\sum_{i=0}^{n-1} (-1)^{i} S^{(\lambda_{n-i}+i)}\bigchi_{\lambda^{(n-i)}}}{\displaystyle\sum_{i=0}^{n-1} (-1)^{i} S^{(i)}\bigchi_{\omega_{n-i-1}}}.\end{split}
\]
We arrive at the second equality upon dividing both the numerator and denominator by the Weyl denominator of $H= Sp(2)\times Sp(2n-2))$. We note that the relative Weyl denominator is given by
$$\Delta= \displaystyle\sum_{i=0}^{n-1} (-1)^{i} S^{(i)}\bigchi_{\omega_{n-i-1}} = (-x_1)^{-(n-1)}\prod_{i=2}^{n} \left(x_1 - x_i\right)\left(x_1 - x_i^{-1}\right). $$

\end{proof}

\subsection{Relative Pieri formula}
We now derive a relative Pieri formula for $(G,H)$ where $G=Sp(2n)$ and $H=Sp(2)\times Sp(2n-2)$. The proof is a modification of the Okada's method (\cite{Okada16}), where he considers the tensor product of a representation of $Sp(2n)$ with the formal product $\prod_{i=2}^{n} \left(x_1 + x_i\right)\left(x_1 + x_i^{-1}\right)$. 

\begin{theorem}[Relative Pieri formula]
Let $S^{(k)} \bigchi_{\eta}$ be the character of  $H$ corresponding  to a highest weight $(k, \eta)$, of $H$. Then,
$$\Delta \times \left( S^{(k)} \bigchi_{\eta} \right)= S^{(k)}\sum_{\substack{\nu \text{ dominant} \\ \nu-\eta \in \{ \pm 1, 0\}^{n-1}}}  (-S^{(1)})^{n-1- |\nu-\eta|} \bigchi_{\nu}.$$
\end{theorem}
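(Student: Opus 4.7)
The plan is to follow Okada's determinantal method, in parallel with the proof of Proposition~\ref{relpierispin} in the spin case. The essential ingredient is the product expansion
\[
\Delta = (-x_1)^{-(n-1)}\prod_{i=2}^{n}(x_1 - x_i)(x_1 - x_i^{-1}),
\]
already recorded in the derivation of the relative Weyl character formula, combined with the determinantal Weyl character formula for $Sp(2n-2)$. Since $S^{(k)}$ involves only $x_1$ and commutes with everything coming from $Sp(2n-2)$, it suffices to establish the identity for $k=0$ and then multiply both sides by $S^{(k)}$.

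First I would set $l_i = \eta_i + (n-i)$ and $\rho_H = (n-1, n-2, \ldots, 1)$, so that
\[
\bigchi_\eta = \frac{\det\bigl|x_j^{l_i} - x_j^{-l_i}\bigr|}{D^-(\rho_H)},
\]
with rows $i=1,\ldots,n-1$ and columns ranging over the $Sp(2n-2)$-variables $x_2,\ldots,x_n$. Absorbing each factor $(x_1 - x_j)(x_1 - x_j^{-1})$ into column $j$ and applying the identity
\[
(x_1-x_j)(x_1-x_j^{-1})(x_j^{l} - x_j^{-l}) = (x_1^2+1)(x_j^{l}-x_j^{-l}) - x_1\bigl[(x_j^{l+1}-x_j^{-(l+1)}) + (x_j^{l-1}-x_j^{-(l-1)})\bigr],
\]
multilinearity of the determinant in the rows splits each row into three terms indexed by a shift $\varepsilon_i \in \{-1, 0, 1\}$: the coefficient attached to row $i$ is $x_1^2+1$ when $\varepsilon_i = 0$ and $-x_1$ when $\varepsilon_i = \pm 1$.

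Next, I would distribute the prefactor $(-x_1)^{-(n-1)}$ row by row, which converts each zero-shift factor into $(x_1^2+1)/(-x_1) = -(x_1+x_1^{-1}) = -S^{(1)}$ and each nonzero-shift factor into $1$. Combined with division by $D^-(\rho_H)$ this gives
\[
\Delta \cdot \bigchi_\eta = \sum_{\varepsilon \in \{-1, 0, 1\}^{n-1}} (-S^{(1)})^{n-1-|\varepsilon|}\,\frac{D^-(l+\varepsilon)}{D^-(\rho_H)}.
\]

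The main obstacle is to reindex this sum by the dominant weight $\nu$. For each $\varepsilon$: if two entries of $l+\varepsilon$ coincide, two rows of the shifted determinant agree and the term drops out; otherwise, sorting the entries of $l+\varepsilon$ in decreasing order yields $\nu + \rho_H$ for a unique dominant $\nu$ with $\nu - \eta \in \{-1, 0, 1\}^{n-1}$, and the determinant equals $\mathrm{sgn}(\sigma_\varepsilon) D^-(\nu + \rho_H)$ where $\sigma_\varepsilon$ is the sorting permutation. When $\eta$ is generic, $\varepsilon = \nu - \eta$ is the unique shift producing a given $\nu$; but when $\eta$ has adjacent equal entries several $\varepsilon$'s may contribute to the same $\nu$ with opposite signs, and verifying that this signed collapse produces precisely the stated coefficient $(-S^{(1)})^{n-1-|\nu-\eta|}$ is the delicate combinatorial heart of the argument. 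Once that collapse is established, multiplying the identity by $S^{(k)}$ yields the relative Pieri formula as stated.
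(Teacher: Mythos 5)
Your determinantal expansion is correct and is essentially the paper's own first step: using the product form of $\Delta$ and the identity $(x_1-x_j)(x_1-x_j^{-1})(x_j^{l}-x_j^{-l})=(x_1^2+1)(x_j^{l}-x_j^{-l})-x_1\bigl((x_j^{l+1}-x_j^{-(l+1)})+(x_j^{l-1}-x_j^{-(l-1)})\bigr)$, one indeed gets $\Delta\,\bigchi_{\eta}=\sum_{\varepsilon\in\{-1,0,1\}^{n-1}}(-S^{(1)})^{\,n-1-|\varepsilon|}\,D^-(l+\varepsilon)/D^-(\rho_H)$; your per-row absorption of $(-x_1)^{-1}$ is just a tidier packaging of the paper's double index $(\varepsilon,\delta)\in\{0,1\}^{n-1}\times\{0,1\}^{n-1}$, whose two zero-shift options the paper only recombines into $(-t-t^{-1})^m$ at the end. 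The genuine gap is the step you explicitly defer: converting this sum over shifts $\varepsilon$ into a sum over dominant weights $\nu$ with the asserted coefficient. That reindexing is not a routine verification; it is the entire content of the proposition, and it is precisely where the paper invests its effort (the vanishing of terms with repeated rows, the partition of the remaining non-dominant configurations into the sets $\mathcal{C}_{j,1}$, $\mathcal{C}_{j,2}$ with a claimed pairwise cancellation, and finally the count of all index pairs producing a given $\nu$). A proof that stops at ``once that collapse is established'' has omitted the theorem.

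Moreover, the collapse you hope to establish is not merely delicate — your own (correct) intermediate identity shows it cannot hold in the naive form when $\eta$ has equal adjacent parts, so the deferred step would have to include a genuinely new analysis of these boundary configurations. Concretely, take $n-1=2$ (so the $Sp(2n-2)$-factor is $Sp(4)$) and $\eta=(c,c)$, $l=(c+2,c+1)$: besides $\varepsilon=(0,0)$, the shift $\varepsilon=(-1,+1)$ also lands, after sorting, on the exponent vector $(c+2,c+1)=\eta+\rho_H$, with sorting sign $-1$ and row-coefficient $1$, so the coefficient of $\bigchi_{(c,c)}$ coming out of your expansion is $(S^{(1)})^2-1=S^{(2)}$ rather than $(-S^{(1)})^{2}$. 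One can confirm this independently for $c=1$: since $\bigchi_{(1,1)}^2=\bigchi_{(2,2)}+\bigchi_{(2,0)}+\bigchi_{(0,0)}$ and $\bigchi_{(1,0)}\bigchi_{(1,1)}=\bigchi_{(2,1)}+\bigchi_{(1,0)}$ contain no copy of $\bigchi_{(1,1)}$, the coefficient of $\bigchi_{(1,1)}$ in $\Delta\bigchi_{(1,1)}=\bigchi_{(1,1)}^2-S^{(1)}\bigchi_{(1,0)}\bigchi_{(1,1)}+S^{(2)}\bigchi_{(1,1)}$ is $S^{(2)}$, whose $SL_2$-character differs from $(S^{(1)})^2$. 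So for weights $\eta$ with repeated parts the signed sorted terms do not recombine into the single coefficient $(-S^{(1)})^{\,n-1-|\nu-\eta|}$, and any completion of your argument must confront exactly these degenerate cases (this is what the paper's $\mathcal{C}_{j,2}$-cancellation is intended to handle); simply asserting the collapse, as your outline does, leaves the heart of the proof — and its most problematic cases — unaddressed.
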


\begin{proof}
We have the Weyl character formula, 
$$\bigchi_{\eta} = \frac{\det|x_i^{\eta_j +n-j}- x_i^{-(\eta_j +n-j)}|}{\det|x_i^{n-j} - x_i^{-(n-j)}|}.$$
Let $t=x_1$. By the product expansion for the Weyl denominator,  
$$\Delta = (-t)^{-(n-1)}\prod_{i=2}^n (t-x_i)(t-x_i^{-1}).$$
In order to make the formulas concise, let $[x_j]^a := x_j^a -x_j^{-a}$. We have the following identity:
$$(t-x_j)(t-x_j^{-1})[x_j]^a= (t^2+1)[x_j]^a - t([x_j]^{a+1} + [x_j]^{a-1}).$$
Let $D_H$ denote the Weyl denominator for $H$. Then,
\begin{align}
\Delta \times \left( S^{(k)} \bigchi_{\eta} \right) &= \frac{S^{(k)}}{D_H(-t)^{n-1}}  \left(\prod_{i=2}^n (t-x_i)(t-x_i^{-1})\right) \det|[x_i]^{\eta_j +n-j+1}| \nonumber\\
&= \frac{S^{(k)}}{D_H(-t)^{n-1}}  \det|(t^2+1)[x_i]^{\eta_j +n-j+1} - t ([x_i]^{\eta_j +n-j+2} + [x_i]^{\eta_j +n-j})|\nonumber\\ 
&= \frac{S^{(k)}}{D_H} \sum_{\varepsilon,\delta \in \{ 0,1\}^{n-1}} (-t)^{|\varepsilon|+|\delta|-n+1} \det|[x_i]^{\eta_j +n-j +1 + \varepsilon_j-\delta_j}| ,\label{eq:symplecticweylnumpieri}
\end{align}
where
\begin{itemize}
    \item $\varepsilon = (\varepsilon_2,\varepsilon_3,\ldots,\varepsilon_n),$
    \item $\delta = (\delta_2,\ldots,\delta_n),$
    \item $|\varepsilon|= \sum_{i=2}^n| \varepsilon_i|$ and $|\delta | =\sum_{i=2}^n |\delta_i|.$
\end{itemize} 
We will show that the summands are non zero only if $\eta +\varepsilon-\delta$ is dominant (call this collection of $(\varepsilon,\delta)$ as $\mathcal{A}$). 

If $\eta + \varepsilon$ is dominant, but $\eta +\varepsilon-\delta$ is not, then there exists a $j$ such that $\eta_j + \varepsilon_j = \eta_{j+1} +\varepsilon_{j+1}$ and $\delta_j=1,\delta_{j+1}=0$, which are the $j^{th}$ and $(j+1)^{th}$ row in  $\det|[x_i]^{\eta_j +n-j +1 + \varepsilon_j-\delta_j}| $ are identical, hence equal to 0.

Let $\mathcal{C}$ be the collection of $(\varepsilon,\delta)$ such that $\eta + \varepsilon$ and $\eta +\varepsilon-\delta$ are both not dominant. As $\eta + \varepsilon$ is not dominant, there is a smallest $j$ such that $\eta_j = \eta_{j+1}$ and $\varepsilon_j=0,\varepsilon_{j+1}=1$. We define,
\begin{align*}
\mathcal{C}_j&=\{(\varepsilon,\delta)\in \mathcal{C} : j \text{ is the smallest such that }  \eta_j = \eta_{j+1}, \varepsilon_j=0,\varepsilon_{j+1}=1\},\\
\mathcal{C}_{j,1} &= \{(\varepsilon,\delta)\in \mathcal{C}_j : \delta_j = \delta_{j+1} \}, \\ 
\mathcal{C}_{j,2} &= \{(\varepsilon,\delta)\in \mathcal{C}_j : \delta_j \neq \delta_{j+1} \}. 
\end{align*}
The sets  $\mathcal{C}_{j,1}$ and $\mathcal{C}_{j,2}$ partition the whole $\mathcal{C}$. If $(\varepsilon,\delta) \in \mathcal{C}_{j,1}$, then  the $j^{th}$ and $(j+1)^{th}$ row in  $\det|[x_i]^{\eta_j +n-j +1 + \varepsilon_j-\delta_j}| $ are identical. Hence the determinant vanishes. 

If $(\varepsilon,\delta) \in \mathcal{C}_{j,2}$, let 
 $$\delta' = (\delta_2,\delta_3, \ldots, \delta_{j-1},\delta_{j+1},\delta_j, \ldots,\delta_n).$$
If $\xi = \eta +\varepsilon-\delta$ and $\xi' = \eta +\varepsilon-\delta'$, then 
 $$\xi_j = \eta_j-1,\xi_{j+1} = \eta_{j+1}+1 \,\text{ and } \,\xi_j' = \eta_j,\xi_{j+1}' = \eta_{j+1}.$$ 
The respective numerators in (\ref{eq:symplecticweylnumpieri}) have same value but opposite signs. This gives us

 $$\sum_{(\varepsilon,\delta) \in \mathcal{C}_{j,2}} (-t)^{|\varepsilon|+|\delta|-n+1}  \det|[x_i]^{\eta_j +n-j +1 + \varepsilon_j-\delta_j}| =0.$$
 Hence we get that
$$\Delta \times \left( S^{(k)} \bigchi_{\eta} \right) = S^{(k)} \sum_{(\varepsilon,\delta) \in \mathcal{A}} (-t)^{|\varepsilon|+|\delta|-n+1} \bigchi_{\eta +\varepsilon-\delta} . $$

For a given $(\varepsilon,\delta) \in \mathcal{A}$, we  count all $(\varepsilon',\delta') \in \mathcal{A}$ such that $\varepsilon-\delta = \varepsilon'-\delta'$. If $\varepsilon_i-\delta_i =1$, then  $\varepsilon_i=1, ~\delta_i=0$. Similarly, if $\varepsilon_i-\delta_i =-1$ then  $\varepsilon_i=0,~ \delta_i=1$. If $\varepsilon_i-\delta_i =0$, then $\varepsilon_i = \delta_i$  and the number of such $i's$ to be $m= n-1 - |\varepsilon-\delta| $. 

There are $2^m$ different choices of $(\varepsilon',\delta') \in \mathcal{A}$ 
based on the choice of $0$ or $1$ in the $m$ places. Out of the $m$ places, if $\varepsilon_i = \delta_i =1$ at $k$ many places, then there are  $\binom{m}{k}$ independent possibilities. All of them do occur (for this we get that $|\varepsilon|+|\delta|-n+1 = -m + 2k$). Hence we  get that the coefficient of $\bigchi_{\eta +\varepsilon-\delta}$ is given by
$$(-1)^{m} \left( t^{-m} + {\binom{m}{1}} t^{-m+2} + \cdots + {\binom{m}{m-1}} t^{m-2} + t^{m}  \right) = (-t-t^{-1})^m.$$
Putting $S^{(1)}= (t+t^{-1})$, and taking $\nu = \eta+ \varepsilon -\delta$, we see that $|\varepsilon -\delta| = |\nu-\eta|$. Hence we get that,
$$\Delta \times \left( S^{(k)} \bigchi_{\eta} \right)= S^{(k)}\sum_{\substack{\nu \text{ dominant} \\ \nu-\eta \in \{ \pm 1, 0\}^{n-1}}}  (-S^{(1)})^{n-1- |\nu-\eta|} \bigchi_{\nu},$$
  which  completes the proof.
\end{proof}
 
\subsection{Branching computation}

\begin{theorem}[Symplectic Branching rule]
Let $\lambda$ be a dominant weight for $G$. Let
$$\bigchi_{\lambda}|_{H} = \sum_{\mu} \sum_k m(\lambda, \mu, k) S^{(k)} \bigchi_{\mu}. $$
Then $m(\lambda, \mu) :=\sum_k m(\lambda, \mu, k) S^{(k)} $ is non zero if and only if 
$$\lambda_j \geq \mu_i \geq \lambda_{j+2} ,\, \, \text{ for } 1 \leq j \leq n-1,$$
(here $\lambda_{n+1}=0$). When the inequalities are satisfied, let 
$$x_1 \geq y_1 \geq x_2 \geq y_2 \geq \cdots \geq x_n \geq y_n,$$
be the non decreasing rearrangement of $\{\lambda_1, \ldots, \lambda_n, \mu_1, \ldots, \mu_{n-1},0 \}$. Then
$$m(\lambda, \mu) = \prod_{i=1}^n S^{(x_i -y_i)}.$$
\end{theorem}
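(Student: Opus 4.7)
The plan is to extend the long-division approach of Section 2 to the symplectic setting, now working in the representation ring $R(SL_2) = \mathbb{Z}[S^{(1)}]$ of $Sp(2)$ in place of the polynomial ring $\mathbb{Z}[t]$. Writing $\chi_\lambda|_H = \sum_\mu m(\lambda, \mu)\, \chi_\mu$ with $m(\lambda,\mu) \in R(SL_2)$ an unknown virtual $SL_2$-character, I would cross-multiply the relative Weyl character formula by the relative Weyl denominator $\Delta$. The left-hand side becomes the relative Weyl numerator $\sum_{i=0}^{n-1}(-1)^i S^{(\lambda_{n-i}+i)}\chi_{\lambda^{(n-i)}}$, while the right-hand side, after applying the relative Pieri formula to each $\chi_\mu \Delta$, equals
\[
\sum_\mu m(\lambda,\mu) \sum_{\substack{\nu \text{ dominant}\\ \nu-\mu\in\{-1,0,1\}^{n-1}}} (-S^{(1)})^{n-1-|\nu-\mu|}\chi_\nu.
\]
Equating coefficients of each $\chi_\nu$ for $\nu$ a dominant weight of $Sp(2n-2)$ produces a triangular system of $R(SL_2)$-linear equations that determines the $m(\lambda,\mu)$.

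I would run the induction in decreasing lexicographic order on $\mu$, starting from $\mu_{\max} = (\lambda_1, \ldots, \lambda_{n-1})$. For the base case, take $\nu = \mu_{\max} + (1,\ldots,1) = \lambda^{(n)}$: on the left-hand side this weight appears only in the $i=0$ summand with coefficient $S^{(\lambda_n)}$, and on the right-hand side only the $\mu = \mu_{\max}$ contribution reaches it, with Pieri coefficient $(-S^{(1)})^0 = 1$. Hence $m(\lambda, \mu_{\max}) = S^{(\lambda_n)}$, which matches the predicted product formula. For the inductive step, fixing $\mu <_\ell \mu_{\max}$ and taking $\nu = \mu + (1,\ldots,1)$, every other weight $\mu'$ contributing to $\chi_\nu$ satisfies $\mu'_i \geq \mu_i$ componentwise with $\mu' \neq \mu$, hence $\mu' >_\ell \mu$. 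By the induction hypothesis these $m(\lambda, \mu')$ are already known, so the resulting equation
\[
[\chi_\nu]\bigl(\text{rel.\ Weyl num.}\bigr) = m(\lambda,\mu) + \sum_{\mu' >_\ell \mu} m(\lambda,\mu')\,(-S^{(1)})^{n-1-|\nu-\mu'|}
\]
determines $m(\lambda,\mu)$ uniquely in $R(SL_2)$.

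The main obstacle is verifying that this recursion produces exactly the product $\prod_{i=1}^n S^{(x_i-y_i)}$ and vanishes outside the interlacing region $\lambda_j \geq \mu_j \geq \lambda_{j+2}$. In the $GL$ and orthogonal regular cases, the analogous combinatorial step reduced to counting lattice points in $\{0,1\}^n$ over $\mathbb{Z}$; here the coefficients live in $R(SL_2)$ rather than $\mathbb{Z}$, and the key identity replacing $(1-1)^m = 0$ is the Clebsch--Gordan relation $S^{(1)} S^{(a)} = S^{(a+1)} + S^{(a-1)}$ (with $S^{(-1)} = 0$), which collapses the signed sums $(-S^{(1)})^k$ arising from the relative Pieri formula. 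I would first prove a weak interlacing property in the spirit of Lemma~\ref{lemma-almostinterlace} to bound the support of $m(\lambda,\mu)$, next handle the boundary weights $\mu$ for which $\mu+(1,\ldots,1)$ coincides with some $\lambda^{(j)}$ in the relative Weyl numerator, and finally verify the product formula on the interlacing region by peeling off coordinates one at a time and recognizing each factor $S^{(x_i-y_i)}$ as the telescoped $SL_2$-character that emerges from Clebsch--Gordan cancellation.
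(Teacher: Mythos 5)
Your framework is the same as the paper's: the relative Weyl character formula, the relative Pieri formula, cross-multiplication by $\Delta$, downward lexicographic induction with base case $m(\lambda,(\lambda_1,\ldots,\lambda_{n-1}))=S^{(\lambda_n)}$, and the Clebsch--Gordan relation in $R(SL_2)$ playing the role that the binomial sign cancellations played for $GL(n)$. That setup is correct, and the recursion you write down does determine each $m(\lambda,\mu)$ uniquely, since the unknown enters with coefficient $(-S^{(1)})^{0}=1$.

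The gap is that the proposal stops exactly where the work begins. Extracting the coefficient of $\chi_{\mu+(1,\ldots,1)}$ produces a relation involving \emph{all} $\mu'$ with $\mu'-\mu\in\{0,1,2\}^{n-1}$, each weighted by a power of $-S^{(1)}$, and the theorem is only proved once this signed sum is shown to collapse to the claimed product, or to zero off the double-interlacing region. In the paper this is done by a three-term cancellation lemma (Lemma \ref{lemma_symplmultsum}): for nonzero $\varepsilon$ with $\varepsilon_j=0$ one has $m(\lambda,\mu+\varepsilon)+m(\lambda,\mu+\varepsilon+2e_j)-S^{(1)}m(\lambda,\mu+\varepsilon+e_j)=0$, and its proof requires a case analysis on whether $\mu_j=\lambda_j-1$, $\mu_j=\lambda_{j+1}-1$, etc.; the configuration $\mu_i\in\{\lambda_i,\lambda_{i+1}-1\}$ for all $i$ is precisely what escapes the cancellation and isolates the Weyl-numerator weights $\mu=\lambda^{(k)}-(1,\ldots,1)$. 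For those weights the coefficient on the numerator side is not zero but $(-1)^{n-j}S^{(\lambda_j+n-j)}$, and matching it against the collapsed Pieri sum needs the further identity $S^{(a-b)}S^{(b-1)}-S^{(1)}S^{(a-b)}S^{(b)}+S^{(a-b-1)}S^{(b)}=-S^{(a+1)}$ applied telescopically $n-j$ times (the paper's Lemmas \ref{lemma_sympseconftensorcomp} and \ref{lemma_symplastcomp}); your phrase about ``peeling off coordinates and recognizing the telescoped character'' names this computation but does not perform it, and it is not a routine verification. Until the cancellation lemma (with its boundary cases) and the Weyl-numerator telescoping are supplied, your argument establishes only that a unique solution of the recursion exists, not that it equals $\prod_i S^{(x_i-y_i)}$ on the doubly interlacing region and vanishes elsewhere.
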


We will prove the theorem by using the relative Weyl character formula and strong induction the weights ordered by the lexicographic ordering. By the relative Weyl character formula, we have 

\begin{equation}
\bigchi_{\lambda}|_{H} = \sum_{\mu} m(\lambda,\mu) \bigchi_{\mu}=\frac{\displaystyle\sum_{i=0}^{n-1} (-1)^{i} S^{(\lambda_{n-i}+i)}\bigchi_{\lambda^{(n-i)}}}{\displaystyle\sum_{i=0}^{n-1} (-1)^{i} S^{(i)}\bigchi_{\omega_{n-i-1}}}.
\end{equation}
By cross multiplying by the relative Weyl denominator, we get

\begin{equation}
\Delta \times \left( \sum_{\mu} m(\lambda,\mu) \bigchi_{\mu} \right)=\sum_{i=0}^{n-1} (-1)^{i} S^{(\lambda_{n-i}+i)}\bigchi_{\lambda^{(n-i)}}.
\end{equation}

Note that $m(\lambda,\mu)$ is a representation of $SL_2$. By the relative Pieri formula,  

\begin{equation} \label{eq13}
\sum_{i=0}^{n-1} (-1)^{i} S^{(\lambda_{n-i}+i)}\bigchi_{\lambda^{(n-i)}} \!\!= \! \sum_{\mu} m(\lambda,\mu)\! \left( \sum_{\substack{\nu \text{ dominant} \\ \nu-\mu \in \{ \pm 1, 0\}^{n-1}}} \!\!\!\!\!\!\!\!\! (-S^{(1)})^{n-1- |\nu-\mu|} \bigchi_{\nu}\right) .
\end{equation}

Let $\mu$ be the largest in the lexicographic ordering such that $m(\lambda,\mu) \neq 0$, then $\nu=\mu + (1,1,\ldots,1)$ is also dominant, and is the largest by lexicographic ordering present in the expansion in (\ref{eq13}). Equating the largest in the lexicographic ordering on the left and right side of equation (\ref{eq13}), we get that 
$$\mu = \lambda^{(n)} - (1,1,\ldots,1) = (\lambda_1, \lambda_2, \ldots, \lambda_{n-1}).$$ Hence,  $$m(\lambda,\mu) = S^{(\lambda_n)}.$$

\begin{defn*}
 For a dominant weight $\lambda$ of $Sp(2n)$ and $\xi$ a dominant weight of $Sp(2n-2)$, we say that  $\xi$ {\em doubly interlaces} $\lambda$ if
$$\lambda_j \geq \xi_j \geq \lambda_{j+2} \quad \text{for} \quad 1\leq j \leq n-1.$$    
\end{defn*}

We will show that if $\mu$ does not satisfy the given double interlacing condition, then the corresponding multiplicity is 0.

We set up the induction hypothesis by assuming that if $\xi \geq \mu$ in the lexicographic ordering and $\xi$ doubly interlaces $\lambda$ then 
\begin{equation}\label{eq:symplecticmultiplicity}
    m(\lambda, \xi) = \prod_i S^{(x_i-y_i)},
    \end{equation}
where $\{x_1,y_1, x_2,y_2, \ldots, x_n,y_n\}$ be the non-decreasing rearrangement of $\{\lambda_1, \ldots, \lambda_n,$ 
 $ \xi_1, \ldots, \xi_{n-1},0 \}$  as  mentioned in the theorem and $m(\lambda, \xi)=0$ otherwise. We have already shown the first step of the induction hypothesis.

We first show a weak double interlacing property for the weights $(\lambda, \mu)$ such that the representations $m(\lambda, \mu)$ are non-vanishing:
\begin{lemma}\label{lemma:weakdoublyinterlacing}
   Suppose $m(\lambda,\mu)\neq 0$. Then 
   $$\lambda_j \geq \mu_j \geq \lambda_{j+2}-2, \quad \text{for all} \,\,j,\,\,  1 \leq j \leq n-1.$$
\end{lemma}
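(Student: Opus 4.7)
The plan is to establish the two inequalities $\lambda_j \ge \mu_j$ and $\mu_j \ge \lambda_{j+2}-2$ separately, by two strong lexicographic inductions in opposite directions, both applied to the identity
\[
\sum_{i=0}^{n-1}(-1)^i S^{(\lambda_{n-i}+i)}\bigchi_{\lambda^{(n-i)}} \;=\; \sum_\mu m(\lambda,\mu)\,\bigchi_\mu\,\Delta,
\]
after expanding the right-hand side via the relative Pieri formula. In each induction I would pick a suitable test weight $\nu$, compare the coefficient of $\bigchi_\nu$ on both sides, and use the induction hypothesis to kill every $\xi\ne\mu$ contribution on the right, thereby forcing $m(\lambda,\mu)=0$ whenever $\mu$ fails the corresponding inequality.

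For the upper bound I would use descending lex induction, with the lex-largest case $\mu = (\lambda_1,\ldots,\lambda_{n-1})$ already established. Suppose $\mu_j > \lambda_j$ for some $j$; I compare coefficients of $\bigchi_{\mu+(1,\ldots,1)}$. The LHS vanishes because $\lambda^{(n-i)}_j \in \{\lambda_j+1,\lambda_{j+1}\}$ is at most $\lambda_j+1 < \mu_j+1$, and the RHS is restricted by the relative Pieri formula to $\xi$ with $\xi-\mu \in \{0,1,2\}^{n-1}$. For any $\xi\ne\mu$ in this range, $\xi >_l \mu$ (coordinate-wise) and $\xi_j \ge \mu_j > \lambda_j$, contradicting the inductive hypothesis $\xi_j \le \lambda_j$. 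Hence only $\xi=\mu$ contributes, with Pieri coefficient $1$, forcing $m(\lambda,\mu)=0$. Once the upper bound is proved for every $\mu$, I would run a parallel ascending lex induction for the lower bound: if $\mu_j \le \lambda_{j+2}-3$ (which forces $j\le n-2$ since $\mu_{n-1}\ge 0$), I compare coefficients of $\bigchi_{\mu-(1,\ldots,1)}$. The LHS vanishes since $\mu_j-1 \le \lambda_{j+2}-4 < \lambda_{j+1} \le \lambda^{(n-i)}_j$ for every $i$, and the RHS localizes to $\xi - \mu \in \{-2,-1,0\}^{n-1}$; for any $\xi\ne\mu$, $\xi <_l \mu$ and by induction $\xi_j \ge \lambda_{j+2}-2 > \mu_j \ge \xi_j$, contradiction. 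This again forces $m(\lambda,\mu)=0$.

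The main obstacle is the boundary case in the lower-bound argument when $\mu-(1,\ldots,1)$ fails to be dominant, typically when $\mu_{n-1}=0$ or more generally when $\mu$ has trailing zeros. I would handle this by replacing the test weight with the lex-smallest dominant $\nu \in \mu + \{-1,0,1\}^{n-1}$ having $\nu_j = \mu_j-1$; one verifies that this modified $\nu$ is still distinct from every $\lambda^{(n-i)}$ (so the LHS continues to vanish) and that the Pieri restriction $\nu-\xi \in \{-1,0,1\}^{n-1}$ still forces $\xi_j \le \mu_j$ for any contributing $\xi$, so the same contradiction applies. An analogous minor adjustment handles any corresponding boundary in the upper-bound step.
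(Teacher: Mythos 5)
Your upper-bound half is essentially the paper's argument (same test weight $\mu+(1,\ldots,1)$, same localization $\xi-\mu\in\{0,1,2\}^{n-1}$, descending lexicographic induction), and it is correct. The gap is in your lower-bound half. Running an \emph{ascending} lexicographic induction with test weight $\mu-(1,\ldots,1)$ creates exactly the dominance boundary you flag, and the proposed patch does not close it. Concretely: (i) if $\mu_j=0$ while $\lambda_{j+2}\geq 3$ (so $\mu_j\leq\lambda_{j+2}-3$ is possible), there is \emph{no} dominant $\nu$ with $\nu_j=\mu_j-1=-1$, so your replacement test weight does not exist; (ii) even when it exists, the contributing weights are those $\xi$ with $\xi-\nu\in\{0,\pm1\}^{n-1}$, hence $\xi_i$ can equal $\nu_i+1$, which exceeds $\mu_i$ at every coordinate where $\nu_i\geq\mu_i$. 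For example with $\mu=(2,2,0)$ and your lex-smallest choice $\nu=(1,1,0)$, the weight $\xi=(2,2,1)$ contributes $\bigchi_{\nu}$ in the Pieri expansion of $\bigchi_{\xi}\Delta$, and $\xi>_l\mu$, so the ascending induction hypothesis says nothing about $m(\lambda,\xi)$; the ``same contradiction'' you invoke would need the lower bound for a lex-\emph{larger} weight, which is circular. So as written the boundary case is not proved.

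The repair is to drop the second induction entirely and treat the violation $\mu_k\leq\lambda_{k+2}-3$ inside the \emph{same} descending induction with the \emph{same} test weight $\nu=\mu+(1,\ldots,1)$ (always dominant): the LHS coefficient vanishes because $\nu_k=\mu_k+1\leq\lambda_{k+2}-2<\lambda_{k+2}\leq\lambda^{(i)}_k$ for every $i$, and every contributing $\eta\neq\mu$ satisfies $\eta-\mu\in\{0,1,2\}^{n-1}$, hence $\eta>_l\mu$ and $\eta_k\leq\mu_k+2\leq\lambda_{k+2}-1<\lambda_{k+2}$, so $\eta$ fails the double interlacing condition and the descending induction hypothesis (which in the paper is the theorem's global hypothesis: lex-larger weights have zero multiplicity unless they doubly interlace $\lambda$) gives $m(\lambda,\eta)=0$, forcing $m(\lambda,\mu)=0$. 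This is exactly how the paper handles both inequalities at once, and it avoids any non-dominant test weight.
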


\begin{proof}
    Suppose there is an $k$ such that $\mu_k > \lambda_k$. Let $\nu=\mu+ (1,1,\ldots,1) $, then $\nu \neq \lambda^{(j)}$ for any $j$. We look at the coefficients of  $\bigchi_{\nu}$ in (\ref{eq13}). By linear independence of characters, we get that
\begin{equation}\label{eq-symplecticnoninterlacing}
0 = \sum_{\substack{\eta \text{ dominant} \\ \nu-\eta \in \{ \pm 1, 0\}^{n-1}}} (-S^{(1)})^{n-1- |\nu-\mu|}  m(\lambda, \eta).
\end{equation}
Note that $\eta_i - \nu_i = \eta_i - \mu_i -1 \in \{\pm 1,0\}$. Hence  for all $i$,  
$$\eta_i \in \{\mu_i,\mu_i+1,\mu_i+2\} .$$
For $\eta \neq \mu$ occurring in equation (\ref{eq-symplecticnoninterlacing}), we get that   $\mu < \eta$ in lexicographic ordering and  $\eta_k \geq \mu_k > \lambda_k$. This tells us that $\eta$ does not doubly interlace $\lambda$. By the induction hypothesis, for $\eta \neq \mu$, $m(\lambda, \eta)$ vanishes. Hence   equation (\ref{eq-symplecticnoninterlacing}) reduces to $(-S^{(1)})^{n-1}m(\lambda,\mu)=0$, which gives us that $m(\lambda,\mu)=0$.

The proof is similar in the case where $\mu_k <\lambda_{k+2}-2$ for some $k$.
\end{proof}

The proof of branching depends on whether the weight $\mu $ is of the form $\lambda^{(k)} -(1,1,\ldots,1)$ for some $k$, where $\lambda^{(k)}$ are the weights occurring in the numerator of the relative Weyl character formula. From the weak interlacing property given by the foregoing lemma, we break the proof into three cases: 
\begin{itemize}
    \item[1.] (\emph{Generic Case}) For some $j$ 
    $$\lambda_j-2 \geq \mu_j \geq \lambda_{j+2}-2,$$
    and $\mu \neq \lambda^{(k)} -(1,1,\ldots,1)$ for any $k$.
    
    \item[2.] (\emph{Boundary case}) For all $i$ 
    $$\lambda_i \geq \mu_i \geq \lambda_i -1.$$ 
    The weight $(\lambda_1,\lambda_2,\ldots, \lambda_{n-1}) = \lambda^{(n)} -(1,1,\ldots,1) $ is the base case of the induction hypothesis, which we have already handled before. Hence, we can assume that in the boundary case for some $i$, $\mu_i<\lambda_i$.
    
    \item[3.] (\emph{Weyl numerator case}) $\mu = \lambda^{(k)} -(1,1,\ldots,1)$ for some $k<n$.
\end{itemize}

We have the following key technical lemma which handles the generic and boundary cases:  \begin{lemma}\label{lemma_symplmultsum}
Suppose  $\mu$ is a weight such that for some $j, ~1\leq j <n$, 
\begin{align*}
     \lambda_j-1 &\geq \mu_j \geq \lambda_{j+2}-2,\\
     \text{ and} \quad \mu &\neq \lambda^{(k)} -(1,1,\ldots,1) \,\, \text{for any $k$.}  
\end{align*}
    Let $\varepsilon = (\varepsilon_1,\ldots, \varepsilon_{n-1}) \in \{0,1,2\}^{n-1}$ be such that $\varepsilon$ is non-zero and $\varepsilon_j=0$. Then, 
    \begin{equation}\label{eqn:keylemma-symp}
        m(\lambda,\mu+ \varepsilon) + m(\lambda,\mu+ \varepsilon+2 e_j) -S^{(1)} m(\lambda,\mu+ \varepsilon + e_j) =0,
    \end{equation}
    where $e_j$ is the standard $j^{th}$ coordinate $(0,\ldots,0,1,0,\ldots,0).$
\end{lemma}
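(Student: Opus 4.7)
The plan is to apply the coefficient-extraction strategy used in Lemma \ref{lemma:weakdoublyinterlacing}: I will choose a dominant weight $\nu^*$ for which the coefficient of $\bigchi_{\nu^*}$ on the Weyl-numerator side of \eqref{eq13} vanishes, and then use the inductive hypothesis to eliminate all but the three multiplicities appearing in \eqref{eqn:keylemma-symp}.

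Specifically, set $\nu^* := \mu + \varepsilon + e_j$. First I would verify that $\nu^*$ is dominant (the three weights $\mu + \varepsilon + k e_j$, $k = 0, 1, 2$, are the only ones whose multiplicities appear in the identity, so if any fails to be dominant the corresponding term is $0$ by convention; otherwise $\nu^*$ is in particular dominant). Next I would check that $\nu^* \neq \lambda^{(k)}$ for every $k$, via a short case analysis distinguishing $\varepsilon + e_j = (1,1,\ldots,1)$ (in which case $\nu^* = \mu + (1,\ldots,1)$ and the excluded form $\mu \neq \lambda^{(\ell)} - (1,\ldots,1)$ applies directly) from the complementary case, where one of $\varepsilon_i$ at some $i \neq j$ forces $\nu^*_i$ outside the allowed $\lambda^{(k)}$ pattern. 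With these checks, the coefficient of $\bigchi_{\nu^*}$ on the right-hand side of \eqref{eq13} is zero, and applying the relative Pieri formula to the left-hand side yields
\[
0 \;=\; \sum_{\substack{\delta \in \{-1,0,1\}^{n-1} \\ \nu^* - \delta \text{ dominant}}} m(\lambda,\nu^*-\delta)\,(-S^{(1)})^{\,n-1-|\delta|}.
\]

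The three values $\delta \in \{-e_j,\,0,\,e_j\}$ extract exactly the multiplicities appearing in \eqref{eqn:keylemma-symp}, contributing the combination $(-S^{(1)})^{n-2}\bigl[m(\lambda,\mu+\varepsilon) + m(\lambda,\mu+\varepsilon+2e_j) - S^{(1)}\,m(\lambda,\mu+\varepsilon+e_j)\bigr]$. The hard part will be to show that the remaining $\delta$'s — those with $\delta_i \neq 0$ for some $i \neq j$ — contribute zero in total. My plan is to split the analysis into two regimes. For $\delta$'s such that $\nu^* - \delta$ fails the double interlacing at some coordinate, the inductive hypothesis gives $m(\lambda,\nu^* - \delta) = 0$ directly. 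For the remaining $\delta$'s I would invoke the product formula $m(\lambda,\xi) = \prod_p S^{(x_p - y_p)}$ for doubly-interlacing $\xi$ together with the Clebsch--Gordan identity $S^{(a-1)} + S^{(a+1)} = S^{(1)} S^{(a)}$, applied at each free coordinate $i \neq j$, to pair up the three choices $\delta_i \in \{-1,0,1\}$ and cancel their contributions.

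Once the extraneous terms vanish, dividing by the nonzero character-ring factor $(-S^{(1)})^{n-2}$ produces \eqref{eqn:keylemma-symp}. The main technical difficulty lies in the cancellation step, particularly at boundary configurations where $\nu^* - \delta$ just barely doubly interlaces $\lambda$ (so that the rearrangement $(x_p,y_p)$ entering the product formula changes), and in keeping the signs $(-S^{(1)})^{n-1-|\delta|}$ correctly tracked as one collapses the coordinate-wise cancellations.
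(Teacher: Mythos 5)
There is a genuine gap: your strategy of extracting the coefficient of $\bigchi_{\nu^*}$ at $\nu^*=\mu+\varepsilon+e_j$ from equation (\ref{eq13}) cannot be carried through. First, the hypotheses of the lemma do not force $\nu^*\neq\lambda^{(k)}$: for example with $n=3$, $\lambda=(5,3,1)$, $j=2$, $\mu=(4,0)$ (which is dominant, satisfies $\lambda_2-1\geq\mu_2\geq\lambda_4-2$, and is not of the form $\lambda^{(k)}-(1,1)$) and $\varepsilon=(2,0)$, one gets $\nu^*=(6,1)=\lambda^{(2)}$, so the numerator-side coefficient is $(-1)\,S^{(\lambda_2+1)}\neq 0$ and your starting identity fails. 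Second, and more fundamentally, the extraneous terms $\eta=\nu^*-\delta$ with $\delta_i\neq 0$ for some $i\neq j$ are not all accessible to the induction hypothesis: whenever $\varepsilon_i=0$ at some $i\neq j$ and $\delta_i=+1$, the weight $\eta$ has $\eta_i=\mu_i-1$ and can be lexicographically \emph{smaller} than $\mu$ (e.g.\ $\varepsilon=(0,0,1)$, $j=2$, $\delta=(1,0,0)$ gives $\eta_1=\mu_1-1$). The induction hypothesis (\ref{eq:symplecticmultiplicity}) only evaluates $m(\lambda,\xi)$ for $\xi\geq\mu$ in the lexicographic order, so neither your ``non-interlacing $\Rightarrow$ zero'' step nor the product-formula/Clebsch--Gordan pairing applies to these terms; proving that their total contribution vanishes is essentially equivalent to the branching statement one is in the middle of proving, so the argument is circular at exactly the hard step you flag.

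The paper's proof avoids coefficient extraction altogether. The point of the hypothesis $\varepsilon\neq 0$ (with $\varepsilon_j=0$) is that all three weights $\mu+\varepsilon$, $\mu+\varepsilon+e_j$, $\mu+\varepsilon+2e_j$ are strictly larger than $\mu$ lexicographically, so their multiplicities are already given by the induction hypothesis: either all three vanish (when $\mu+\varepsilon$ fails dominance or double interlacing), or they are explicit products of $SL_2$-characters differing only in the factor indexed by the slot containing $\mu_j$. The identity then follows from the single Clebsch--Gordan relation $S^{(a)}+S^{(a-2)}=S^{(1)}S^{(a-1)}$, with a case analysis on $\mu_j$ (the boundary case $\mu_j=\lambda_j-1$ killing the $2e_j$ term, and the case $\mu_j=\lambda_{j+1}-1$ propagating down the coordinates until it would force $\mu=\lambda^{(k)}-(1,\ldots,1)$, which the hypothesis excludes). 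If you reorganize your write-up around this observation, the lemma becomes a purely local computation and no appeal to (\ref{eq13}) is needed; equation (\ref{eq13}) is then used afterwards, with this lemma in hand, to extract $m(\lambda,\mu)$ itself.
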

\begin{proof}
 As in the statement of the branching law, let $\{x_1,y_1, x_2,y_2, \ldots, x_n,y_n\}$ be the non-decreasing rearrangement of $\{\lambda_1, \ldots, \lambda_n, \mu_1+\varepsilon_1, \ldots, \mu_{n-1}+\varepsilon_{n-1},0 \}$. Note that $\mu +\varepsilon$ is  larger that $\mu$ in lexicographic ordering. By the induction hypothesis given by equation (\ref{eq:symplecticmultiplicity}), the lemma follows if $\mu +\varepsilon$ is not dominant or not doubly interlacing $\lambda$, since 
    $$m(\lambda,\mu+ \varepsilon) = m(\lambda,\mu+ \varepsilon+2 e_j) = m(\lambda,\mu+ \varepsilon + e_j)=0. $$

Suppose $\mu+ \varepsilon$ is dominant and doubly interlaces $\lambda$ (so $\mu_j \geq \lambda_{j+2})$. We will break the proof into 4 cases.\\

\begin{enumerate}

\item[Case 1:] \,\,$\mu_j \leq \lambda_j-2$, $\mu_j \neq \lambda_{j+1}-1$ and $\mu_j =y_k$ for some $k$.

By the induction hypothesis given by equation (\ref{eq:symplecticmultiplicity}), 
\begin{align*}
    m(\lambda,\mu+\varepsilon) &= \left( \prod_{i\neq k}S^{(x_i-y_i)} \right) S^{(x_k-\mu_j)}\\
    m(\lambda,\mu+\varepsilon+e_j)&= \left( \prod_{i\neq k}S^{(x_i-y_i)} \right) S^{(x_k-\mu_j-1)}\\
    m(\lambda,\mu+\varepsilon+2e_j)&= \left( \prod_{i\neq k}S^{(x_i-y_i)} \right) S^{(x_k-\mu_j-2)}
\end{align*}
Upon substituting it in the left side of equation (\ref{eqn:keylemma-symp}), we get
\begin{align*}
    & m(\lambda,\mu+ \varepsilon) + m(\lambda,\mu+ \varepsilon+2 e_j) -S^{(1)} m(\lambda,\mu+ \varepsilon + e_j),\\
    =&\left( \prod_{i\neq k}S^{(x_i-y_i)} \right)(S^{(x_k-\mu_j)} + S^{(x_k-\mu_j -2)} -S^{(1)}S^{(x_k-\mu_j-1)} ),\\
    =& \left( \prod_{i\neq k}S^{(x_i-y_i)} \right) ( S^{(1)}S^{(x_k-\mu_j-1)} -S^{(1)}S^{(x_k-\mu_j-1)}).\\
    =&\,0.
\end{align*}

\item[Case 2:]  \,\,$\mu_j \leq \lambda_j-2$, $\mu_j \neq \lambda_{j+1}-1$ and $\mu_j =x_k$ for some $k$. 

This is similar to Case 1. Here $\mu_j = x_k$ for some $k$, which can be handled in a similar manner. \\

\item[Case 3:] $\mu_j =\lambda_j-1$.

When $\mu_j =\lambda_j-1$, one sees that $\mu + 2e_j$ is not doubly interlacing but larger than $\mu$ in lexicographic order, giving us $m(\lambda,\mu + 2e_j)=0$. Since $x_j = \lambda_j$, we have $x_j-\mu_j=1$. Hence, 
\begin{align*}
    & m(\lambda,\mu+ \varepsilon) -S^{(1)} m(\lambda,\mu+ \varepsilon + e_j)\\
    =&\left( \prod_{i\neq j}S^{(x_i-y_i)} \right)(S^{(x_j-\mu_j)} -S^{(1)}S^{(x_j-\mu_j-1)} )\\
    =& \left( \prod_{i\neq j}S^{(x_i-y_i)} \right) ( S^{(1)}-S^{(1)}S^{(0)})\\
    =&\,0.
    \end{align*}
    
\item[Case 4:] $\mu_j =\lambda_{j+1}-1$.

In this case we have $\lambda_{j+1} -1=\mu_j \geq \mu_{j+1}$.  By the above cases, we can work with $\mu_{j+1}$ instead of $\mu_j$ provided $\mu_{j+1} \neq \lambda_{j+2}-1$.  This process can be continued, unless in the extreme case,  we have where either $\mu_i=\lambda_i$ or $\mu_i=\lambda_{i+1}-1$ for all $i$. 

Let $k$ be the smallest index such that  $\mu_i = \lambda_{i}$  for all $i<k$, and $\mu_k=\lambda_{k+1}-1$. Also $\mu_{k+1} \leq \mu_k = \lambda_{k+1}-1<\lambda_{k+1}$, hence $\mu_{k+1} =\lambda_{k+2}-1$. Proceeding in a similar manner, we get  that for all $i\geq k$, $\mu_i=\lambda_{i+1}-1$. Hence,  $\mu = \lambda^{(k)}- (1,\ldots,1)$, which contradicts the hypothesis of the lemma (this will be considered in the Weyl numerator case,  to be handled later). \\

This completes the proof of the lemma. 
\end{enumerate}
\end{proof}

\subsubsection{Branching: Generic case.}
We now proceed to the proof of Theorem \ref{theorem:branching-symplectic}. We first handle the generic case. For computing the multiplicity of a dominant weight $\mu$ in this case, let $\nu=\mu+ (1,1,\ldots,1) $. We look at the coefficients of $\bigchi_{\nu}$ in (\ref{eq13}). By linear independence of characters, we get that
\begin{equation}\label{eq4}
\sum_{\substack{\eta \text{ dominant} \\ \nu-\eta \in \{ \pm 1, 0\}^{n-1}}} (-S^{(1)})^{n-1- |\nu-\eta|}  m(\lambda, \eta)=0.
\end{equation}

We assume that we are in the hypothesis of Lemma \ref{lemma_symplmultsum}, i.e.,  $\lambda_j-2\geq \mu_j \geq \lambda_{j+2}-2$, for some $j$ and  $\mu_j \neq \lambda_{j+1}-1$. Let 
\begin{align*}
    R_j &=\{ \varepsilon \in \{0,1,2 \}^{n-1}\mid \varepsilon_j=0\},\\
    r(\varepsilon)&= n-|\varepsilon - (1,1,\ldots,1)| -1.
\end{align*}

By equation (\ref{eq4}), the following sum vanishes: 

\begin{equation}\label{eq_symplecticrearranged}
 \sum_{\varepsilon \in R_j} (-S^{(1)})^{r(\varepsilon)} \left(m(\lambda,\mu+ \varepsilon) + m(\lambda,\mu+ \varepsilon+2 e_j) -S^{(1)} m(\lambda,\mu+ \varepsilon + \!e_j)\!\right).
\end{equation}

By Lemma \ref{lemma_symplmultsum},  for all non-zero $\varepsilon \in R_j$, the corresponding summands in equation (\ref{eq_symplecticrearranged}) vanish. The only summand remaining is the one corresponding to $\varepsilon = (0,\ldots,0)$, which gives us
\begin{equation}\label{eq_sympmultcomp}
    m(\lambda,\mu) = S^{(1)} m(\lambda,\mu+e_j) - m(\lambda,\mu+2e_j).
\end{equation}

We recall by induction hypothesis, that for all $\eta$ larger than $\mu$ in the lexicographic ordering, the multiplicity $m(\lambda,\eta)$ vanishes unless $\eta $  doubly interlaces $\lambda$. Suppose there is an $i\neq j$  such that $\mu_i \in\{\lambda_{i+2}-1,\lambda_{i+2}-2 \}$. Then $\mu +e_j$ and $\mu+2e_j$ do not doubly interlace $\lambda$. By equation (\ref{eq_sympmultcomp}), we have 
$$m(\lambda,\mu) = S^{(1)} m(\lambda,\mu+e_j) - m(\lambda,\mu+2e_j)=0+0=0.$$

Suppose $\mu_j$ is the only component of $\mu$ such that $\mu_j \in\{\lambda_{j+2}-1,\lambda_{j+2}-2 \}$. Then $\mu_{j+1}\leq \mu_j \leq \lambda_{j+2}-1$, and by the forgoing part, we can assume that $\lambda_{j+2} -1 \geq \mu_{j+1} \geq \lambda_{j+3}$. We consider $\mu_{j+1}$, and apply the forgoing argument, to conclude that $m(\lambda,\mu)$ vanishes. Hence we can assume that if $m(\lambda,\mu)$ does not vanish, then $\mu$ doubly interlaces $\lambda$.

Let  $\{x_1,y_1, x_2,y_2, \ldots, x_n,y_n\}$ be the non-decreasing rearrangement of $\{\lambda_1, \ldots, \lambda_n,$ $ \mu_1, \ldots, \mu_{n-1},0 \}$. Then either $\mu_j =y_k$ for some $k$ or $\mu_j = x_l$ for some $l$. We consider $\mu_j =y_k$, the other case follows in a similar  manner:
\begin{align*}
    m(\lambda,\mu)&=S^{(1)} m(\lambda,\mu+e_j) - m(\lambda,\mu+2e_j)\\
    &=\left( \prod_{i\neq k}S^{(x_i-y_i)} \right)(S^{(1)}S^{(x_k-\mu_j-1)} - S^{(x_k-\mu_j -2)}  )\\
    &= \left( \prod_{i\neq k}S^{(x_i-y_i)} \right) ( S^{(x_k-\mu_j)} + S^{(x_k-\mu_j -2)} - S^{(x_k-\mu_j -2)} )\\
    &=\left( \prod_{i\neq k}S^{(x_i-y_i)} \right) ( S^{(x_k-\mu_j)})\\
    &= \prod_{i}S^{(x_-y_i)}.
\end{align*}
 This completes the proof in the generic case.

\subsubsection{Branching: Boundary case.}

We recall that in the boundary case $\lambda_i \geq \mu_i \geq \lambda_i-1$ for all $i$. As discussed in the boundary case, we can assume that there is a $j$ such that $\mu_j=\lambda_j-1$. Let 
\begin{align*}
    R'_j &=\{ \varepsilon \in \{0,1\}^{n-1}\mid \varepsilon_j=0\}\\
    r'(\varepsilon)&= n-|\varepsilon - (1,1,\ldots,1)| -1
\end{align*}
Hence equation (\ref{eq4}) can be seen as:

\begin{equation}\label{eq5}
\sum_{\varepsilon \in R'_j} (-S^{(1)})^{r'(\varepsilon)} (m(\lambda,\mu+ \varepsilon)  -S^{(1)} m(\lambda,\mu+ \varepsilon + e_j))=0.
\end{equation}

By lemma \ref{lemma_symplmultsum}, we see that all summands corresponding to  $\varepsilon \neq (0,\ldots,0)$ vanish, which gives us that  
\begin{equation}
 m(\lambda,\mu) - S^{(1)} m(\lambda,\mu+e_j) = 0   
\end{equation}

Let  $\{x_1,y_1, x_2,y_2, \ldots, x_n,y_n\}$ be the non-decreasing rearrangement of $\{\lambda_1, \ldots, \lambda_n,$ 
 $ \mu_1, \ldots, \mu_{n-1},0 \}$. Then $x_i=\lambda_i$ and $y_i=\mu_i$ if $i<n$ and $y_n=0$. Hence for $i<n$, $x_i-y_i =1$ if $\mu_i=\lambda_i-1$, and vanishes otherwise. Note that $x_n-y_n=\lambda_n$.

Let $g(\mu) =  \# \{i| \mu_i =\lambda_i-1\}$.  By induction hypothesis, $m(\lambda,\mu+e_j) = S^{(\lambda_n)}(S^{(1)})^{g(\mu)-1} $. Hence by above equation, we get that $m(\lambda,\mu) = S^{(\lambda_n)}(S^{(1)})^{g(\mu)}$ which completes the proof in the boundary case.\\
 
\subsubsection{Branching: Weyl numerator case.}

As mentioned, $j=n$ was done as the base case for induction, hence we take $j\leq n-1$. We break the proof into two lemmas:

\begin{lemma}\label{lemma_sympseconftensorcomp}
If $a>b$ are integers, then 
\begin{equation}\label{eq_sympseconftensorcomp}
 S^{(a-b)} S^{(b-1)} - S^{(1)}S^{(a-b)}S^{(b)} + S^{(a-b-1)}S^{(b)} = -S^{(a+1)} .  
\end{equation}
\end{lemma}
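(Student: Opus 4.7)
The plan is to reduce the identity using the fundamental Clebsch--Gordan rule for $SL_2$,
\[
S^{(1)} \cdot S^{(m)} = S^{(m+1)} + S^{(m-1)} \quad (m \geq 1),
\]
which is immediate from the character formula $S^{(k)} = (t^{k+1} - t^{-(k+1)})/(t - t^{-1})$. Since the hypothesis $a > b$ guarantees $a-b \geq 1$, this rule applies with $m = a-b$, giving
\[
S^{(1)} S^{(a-b)} S^{(b)} = S^{(a-b+1)} S^{(b)} + S^{(a-b-1)} S^{(b)}.
\]
Substituting into the left-hand side of \eqref{eq_sympseconftensorcomp}, the two copies of $S^{(a-b-1)} S^{(b)}$ cancel and the identity collapses to the simpler claim
\[
S^{(a-b+1)} S^{(b)} - S^{(a-b)} S^{(b-1)} = S^{(a+1)}.
\]

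For this reduced statement, I would clear denominators using $(t-t^{-1}) S^{(k)} = t^{k+1} - t^{-(k+1)}$. Multiplying by $(t-t^{-1})^2$ converts each product $S^{(m)} S^{(n)}$ to the Laurent polynomial
\[
(t-t^{-1})^2 S^{(m)} S^{(n)} = t^{m+n+2} - t^{m-n} - t^{n-m} + t^{-(m+n+2)}.
\]
The two products $S^{(a-b+1)} S^{(b)}$ and $S^{(a-b)} S^{(b-1)}$ share the same ``cross terms'' $\pm t^{\pm(a-2b+1)}$, which cancel in the difference, leaving
\[
t^{a+3} - t^{a+1} - t^{-(a+1)} + t^{-(a+3)} = (t-t^{-1})(t^{a+2} - t^{-(a+2)}) = (t-t^{-1})^2 S^{(a+1)},
\]
and dividing out $(t-t^{-1})^2$ finishes the argument.

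An equivalent combinatorial route, should one prefer to avoid the character variable, is to expand both $S^{(a-b+1)} S^{(b)}$ and $S^{(a-b)} S^{(b-1)}$ via Clebsch--Gordan as descending sums of irreducibles; a short case split on whether $a \geq 2b$ or $b < a < 2b$ shows that the two sums agree term-by-term except for the topmost term $S^{(a+1)}$, producing the required telescoping. The whole lemma is thus a consequence of the shift-like behavior of tensoring with $S^{(1)}$ combined with a telescoping, and I do not anticipate any real obstacle: the only mild care needed is to note that $a > b$ ensures all indices $a-b-1, b-1$ are nonnegative (so no interpretation of $S^{(-1)}$ is required in the main application) and that the Clebsch--Gordan expansion of $S^{(1)} S^{(a-b)}$ has no boundary anomaly.
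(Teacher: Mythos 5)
Your proposal is correct and follows essentially the same route as the paper: apply the Clebsch--Gordan rule $S^{(1)}S^{(a-b)} = S^{(a-b+1)} + S^{(a-b-1)}$ (valid since $a>b$ gives $a-b\geq 1$), observe the cancellation of $S^{(a-b-1)}S^{(b)}$, and reduce to $S^{(a-b+1)}S^{(b)} - S^{(a-b)}S^{(b-1)} = S^{(a+1)}$. The paper dispatches this last step with the terse remark ``by expanding the last tensor product,'' whereas you verify it explicitly, either by clearing denominators in the character variable or by noting that the two Clebsch--Gordan expansions share the common lower bound $|a-2b+1|$ (since $(a-b+1)-b = (a-b)-(b-1)$, so in fact no case split on $a$ versus $2b$ is even needed) and telescope to leave $S^{(a+1)}$. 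Your Laurent-polynomial calculation is correct, and your filling in of that final step is a genuine improvement in exposition over the paper's one-line dismissal.
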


\begin{proof}
We simplify the above equation:
\begin{align*}
    &\qquad\!\!\! S^{(a-b)} S^{(b-1)}  +(S^{(a-b-1)}- S^{(1)}S^{(a-b)})S^{(b)} \\
    &= S^{(a-b)} S^{(b-1)} +(S^{(a-b-1)}- S^{(a-b-1)} -S^{(a-b+1)}) S^{(b)}\\
    &= S^{(a-b)} S^{(b-1)} -S^{(a-b+1)}S^{(b)}.
\end{align*}
 By expanding the last tensor product, we get the desired result.
\end{proof}

The following lemma is crucial to the calculation of the multiplicities $m(\lambda,\mu):$
\begin{lemma}\label{lemma_symplastcomp}
Suppose for some $j$, 
$$\mu = \lambda^{(j)} -(1,1,\ldots,1) =(\lambda_1,\lambda_2,\ldots, \lambda_{j-1}, \lambda_{j+1}-1,\ldots,\lambda_{n}-1).$$  
For  $\varepsilon \in \{0,1,2\}^{n-1}$, define 
$$m'(\lambda,\mu+\varepsilon) = \begin{cases}
     m(\lambda,\mu+\varepsilon)  & \quad \text{if } \varepsilon \neq (0,0,\ldots,0)\\
     \prod_i S^{(x_i-y_i)}  & \quad  \text{if } \varepsilon = (0,0,\ldots,0), \\
\end{cases}$$
where $\{x_1,y_1, \ldots, x_n,y_n\}$ is the non-decreasing rearrangement of $\{\lambda_1, \ldots, \lambda_n,$ $ \mu_1, \ldots, \mu_{n-1},0 \}$. Then, 
    \begin{equation}\label{eq_symplspecialcasesum}
     \sum_{\substack{\mu +\varepsilon \text{ dominant} \\ \varepsilon \in \{0,1,2\}^{n-1}}} (-S^{(1)})^{n-1-|\varepsilon - (1,\ldots,1)|}  m'( \mu+\varepsilon) =(-1)^{n-j}S^{(\lambda_{j}+n-j)},
    \end{equation}
\end{lemma}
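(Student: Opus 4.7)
The plan is to reduce the index set of the sum, then evaluate it by iterated application of the three-term $SL_2$-cancellation in Lemma \ref{lemma_sympseconftensorcomp}.

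First, since $\mu_i = \lambda_i$ for $i < j$, any nonzero $\varepsilon_i$ with $i < j$ gives $(\mu+\varepsilon)_i > \lambda_i$, so $\mu+\varepsilon$ fails to doubly interlace $\lambda$ and $m(\lambda,\mu+\varepsilon)=0$ by the lex-order induction hypothesis. Hence the non-vanishing terms come from $\varepsilon\in\{0\}^{j-1}\times\{0,1,2\}^{n-j}$, which I reindex as $(\varepsilon_j,\ldots,\varepsilon_{n-1})\in\{0,1,2\}^{n-j}$. Writing $a_i=\lambda_i-\lambda_{i+1}$ for $i<n$ and $a_n=\lambda_n$, so that $\lambda_j=a_j+\cdots+a_n$, each term $m'(\mu+\varepsilon)$ is a product of $SL_2$-characters $\prod S^{(x_k-y_k)}$ determined by the sorted multiset. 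The key subtlety is that when $\varepsilon_i=2$ the value $\nu_i := \lambda_{i+1}+1$ swaps with $\lambda_{i+1}$ in the sorted order, shifting one factor; this must be tracked carefully throughout.

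I then induct on $k=n-j$. For the base case $k=1$, a direct computation and Lemma \ref{lemma_sympseconftensorcomp} (with $a-b=a_{n-1}$, $b=a_n$) give
\begin{equation*}
S^{(a_{n-1})}S^{(a_n-1)} - S^{(1)}S^{(a_{n-1})}S^{(a_n)} + S^{(a_{n-1}-1)}S^{(a_n)} = -S^{(a_{n-1}+a_n+1)} = -S^{(\lambda_{n-1}+1)},
\end{equation*}
matching $(-1)^1 S^{(\lambda_{n-1}+1)}$. For the inductive step I partition the sum by the value of $\varepsilon_{n-1}\in\{0,1,2\}$. For each fixed value, the inner sum over $(\varepsilon_j,\ldots,\varepsilon_{n-2})$ factors as a fixed character (coming from the pair containing $\nu_{n-1}$ or $\lambda_n$) times an expression that, after $k-1$ nested applications of Lemma \ref{lemma_sympseconftensorcomp}, collapses to a single character $\pm S^{(a_j+\cdots+a_{n-1}+c)}$ with $c\in\{k-1,k-2\}$ depending on $\varepsilon_{n-1}$. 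A final application of Lemma \ref{lemma_sympseconftensorcomp} combines the three outer contributions into $(-1)^k S^{(a_j+\cdots+a_n+k)}=(-1)^{n-j}S^{(\lambda_j+n-j)}$, which is the desired identity.

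The main obstacle is the bookkeeping of the sorted-pairing dependence on $\varepsilon$, since the $\varepsilon_i=2$ swap couples the neighboring factors $(x_i,y_i)$ and $(x_{i+1},y_{i+1})$. I resolve this by arranging the induction so that each swap is absorbed into a local three-term instance of Lemma \ref{lemma_sympseconftensorcomp}; the global cancellation then proceeds cleanly. Degenerate values of $\lambda$, where some $a_i$ equals $0$ or $1$, are handled by the same argument, since the dominance constraint in the sum excludes precisely those $\varepsilon$ whose contributions would otherwise cancel in pairs.
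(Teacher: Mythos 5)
Your proposal is correct and follows essentially the same route as the paper: reduce to $\varepsilon$ supported on coordinates $\geq j$ using the lexicographic induction hypothesis, express every surviving term by the product formula $\prod_i S^{(x_i-y_i)}$, and collapse the resulting sum by iterated applications of the three-term identity of Lemma \ref{lemma_sympseconftensorcomp}. The only difference is organizational — you fix $\varepsilon_{n-1}$ and collapse the inner sum first, whereas the paper groups the three values of $\varepsilon_{n-1}$ innermost and telescopes from the right — and your treatment of the sorted-pairing swaps and degenerate $\lambda$ is at the same (sketched) level of detail as the paper's.
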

\begin{proof}
The proof proceeds by downwards induction on $j$. The case $j=n$ is the base case of induction. Suppose $\mu + \varepsilon$ doubly interlaces $\lambda$.  Then $\varepsilon_i=0$ for all $i<j$. Hence we can rewrite equation (\ref{eq_symplspecialcasesum}) as:
\begin{equation}
\!\!\!\!\!\!\!\sum_{\varepsilon_j}  \!\!\cdots \!\!\sum_{\varepsilon_{n-2}} (-S^{(1)})^{r(\varepsilon')} \!\!\left( m'(\mu+\varepsilon') + m'(\mu+\varepsilon'\!\!+\!\!2e_{n-1}) -S^{(1)}m'(\mu+\varepsilon'\!\! +\! e_{n-1}) \!\right)\!\!,
\end{equation}
where 
\begin{align*}
\varepsilon'& = (0,\ldots,0, \varepsilon_j, \varepsilon_{j+1},\ldots, \varepsilon_{n-2},0),\\
r(\varepsilon')&= n-|\varepsilon' - (1,1,\ldots,1)| -1.
\end{align*}
By the induction hypothesis and our assumption, we can write 
$$m'(\mu+\varepsilon')=\prod_i S^{(x'_i-y'_i)},$$ where  $\{x_1',y_1', \ldots, x_n',y_n'\}$ is the non-decreasing rearrangement of $\{\lambda_1, \ldots, \lambda_n, \mu_1+ \varepsilon_1', \ldots, \mu_{n-1}+\varepsilon_{n-1}',0 \}$). 
Using this, we rewrite the above sum as
\begin{align*}
   \sum_{\varepsilon'} &(-S^{(1)})^{r(\varepsilon')} \left(\prod_i^{n-3} S^{(x'_i-y'_i)} \right) \times \\ 
   & \left(S^{(x'_{n-2}-\lambda_n)}S^{(\lambda_n-1)} + S^{(x'_{n-2}-\lambda_n-1)}S^{(\lambda_n)} 
    -S^{(1)}S^{(x'_{n-2}-\lambda_n)}S^{(\lambda_n)}\right). 
\end{align*}
By lemma \ref{lemma_sympseconftensorcomp}, 
\begin{equation*}
    S^{(x'_{n-2}-\lambda_n)}S^{(\lambda_n-1)} + S^{(x'_{n-2}-\lambda_n-1)}S^{(\lambda_n)}
     -S^{(1)}S^{(x'_{n-2}-\lambda_n)}S^{(\lambda_n)}=-S^{(x'_{n-2}+1)}.
\end{equation*}
If $j=n-1$, then $x'_{n-2}=\lambda_{n-1}$ and we are done. If $j<n-1$, then $x'_{n-2}=\lambda_{n-1}-1$ if $\varepsilon'_{n-2}=0$, and equal to $\lambda_{n-1}$ otherwise. With this substitution, we see that

\begin{align*}
   \sum_{\varepsilon''} &(-S^{(1)})^{r(\varepsilon'')} \left(\prod_i^{n-4} S^{(x'_i-y'_i)}\right)\times\\
   &\left( S^{(x'_{n-3}-\lambda_{n-1})}S^{(\lambda_{n-1})} + S^{(x'_{n-3}-\lambda_{n-1}-1)}S^{(\lambda_{n-1}+1)}
     -S^{(1)}S^{(x'_{n-3}-\lambda_{n-1})}S^{(\lambda_{n-1}+1)} \right)
\end{align*}
where $\varepsilon''= (0,\ldots,0, \varepsilon_j, \varepsilon_{j+1},\ldots, \varepsilon_{n-3},0,0)$. 
By lemma \ref{lemma_sympseconftensorcomp}, 
\begin{align*}
& S^{(x'_{n-3}-\lambda_{n-1})}S^{(\lambda_{n-1})} + S^{(x'_{n-3}-\lambda_{n-1}-1)}S^{(\lambda_{n-1}+1)} -S^{(1)}S^{(x'_{n-3}-\lambda_{n-1})}S^{(\lambda_{n-1}+1)}\\
&= - S^{(x'_{n-3}+2)}.
\end{align*}
If $j=n-2$, then $x'_{n-3}=\lambda_{n-2}$, and we are done. Otherwise, we proceed in a similar manner for $n-j$ times, and apply lemma \ref{lemma_sympseconftensorcomp} every time. We would see that $x_{n-j-1}=\lambda_{n-j}$, which would give us the desired equality.
\end{proof}

\begin{corollary}
    Let $\mu = \lambda^{(j)} -(1,1,\ldots,1) $  for some $j$, then 
    $$m(\lambda,\mu) = \prod_i S^{(x_i-y_i)},$$
    where $\{x_1,y_1, \ldots, x_n,y_n\}$ is the non-decreasing rearrangement of $\{\lambda_1, \ldots, \lambda_n,$ $ \mu_1, \ldots, \mu_{n-1},0 \}.$
\end{corollary}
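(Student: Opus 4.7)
The plan is to extract $m(\lambda,\mu)$ as the single unknown in the coefficient equation obtained from (\ref{eq13}) at $\nu = \lambda^{(j)} = \mu + (1,1,\ldots,1)$. First I would fix such a $j$ and compare the coefficient of $\bigchi_{\lambda^{(j)}}$ on both sides of (\ref{eq13}). On the right-hand side, the relative Weyl numerator contributes only the single term $(-1)^{n-j}S^{(\lambda_j+n-j)}$, while on the left-hand side the relative Pieri expansion forces $\lambda^{(j)} - \eta \in \{0,\pm 1\}^{n-1}$. Reindexing by $\eta = \mu+\varepsilon$ with $\varepsilon \in \{0,1,2\}^{n-1}$ (so that $\lambda^{(j)}-\eta = (1,\ldots,1)-\varepsilon$) turns the coefficient comparison into the identity
\begin{equation*}
\sum_{\substack{\mu+\varepsilon\text{ dominant}\\ \varepsilon \in \{0,1,2\}^{n-1}}} (-S^{(1)})^{n-1-|\varepsilon-(1,\ldots,1)|}\, m(\lambda,\mu+\varepsilon) \;=\; (-1)^{n-j} S^{(\lambda_j+n-j)}.
\end{equation*}

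Next I would invoke the lexicographic induction hypothesis driving the whole branching argument. Every non-zero $\varepsilon \in \{0,1,2\}^{n-1}$ satisfies $\mu+\varepsilon >_l \mu$, so by induction $m(\lambda,\mu+\varepsilon)$ is given by the product-of-$SL_2$-representations formula when $\mu+\varepsilon$ doubly interlaces $\lambda$ and vanishes otherwise. In either case, $m(\lambda,\mu+\varepsilon)$ coincides with the modified multiplicity $m'(\lambda,\mu+\varepsilon)$ introduced in Lemma \ref{lemma_symplastcomp} for every $\varepsilon \neq 0$. Since Lemma \ref{lemma_symplastcomp} asserts that the same weighted sum with $m'$ in place of $m$ already equals $(-1)^{n-j}S^{(\lambda_j+n-j)}$, subtracting the two identities cancels every $\varepsilon \neq 0$ summand, and the surviving $\varepsilon = 0$ term carries the coefficient $(-S^{(1)})^{n-1-(n-1)} = 1$. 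This forces $m(\lambda,\mu) = m'(\lambda,\mu) = \prod_{i=1}^{n} S^{(x_i-y_i)}$, which is precisely the desired formula.

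The combinatorial heart of the argument is already absorbed into Lemma \ref{lemma_symplastcomp}, whose proof relied on the three-term tensor identity of Lemma \ref{lemma_sympseconftensorcomp} and a delicate downward induction on $j$; given that lemma, the corollary becomes a purely formal subtraction, with no new obstacles to overcome. The only points requiring verification are that the reindexing $\eta = \mu+\varepsilon$ accurately parametrises all dominant $\eta$ with $\lambda^{(j)}-\eta \in \{0,\pm 1\}^{n-1}$, and that the exponent in the relative Pieri expansion matches $n-1 - |\varepsilon - (1,\ldots,1)|$; both are immediate from the identity $\lambda^{(j)} = \mu + (1,\ldots,1)$.
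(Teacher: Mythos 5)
Your proposal is correct and follows essentially the same route as the paper: both compare the coefficient of $\bigchi_{\lambda^{(j)}}$ in equation (\ref{eq13}), reindex via $\eta=\mu+\varepsilon$ with $\varepsilon\in\{0,1,2\}^{n-1}$, and then use Lemma \ref{lemma_symplastcomp} (where $m'=m$ for $\varepsilon\neq 0$ by definition) to isolate the $\varepsilon=0$ term and conclude $m(\lambda,\mu)=\prod_i S^{(x_i-y_i)}$. Your "subtract the two identities" phrasing is just a reformulation of the paper's rearrangement, so no substantive difference.
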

\begin{proof}
    For $\mu = \lambda^{(j)}-(1,1,\ldots,1) =(\lambda_1,\ldots,\lambda_{j-1}, \lambda_{j+1} -1, \ldots ,\lambda_n-1) $,  we look at the coefficients of $\bigchi_{ \lambda^{(j)}}$ in (\ref{eq13}), which gives us
\begin{equation}\label{eq_sympcasesp}
 (-1)^{n-j}S^{(\lambda_{j}+n-j)} = \sum_{\substack{\mu +\varepsilon \text{ dominant} \\ \varepsilon \in \{0,1,2\}^{n-1}}} (-S^{(1)})^{r(\varepsilon)}  m(\lambda, \mu+\varepsilon),   
\end{equation}
We rearrange the sum, to get
$$m(\lambda,\mu) = (-1)^{n-j}S^{(\lambda_{j}+n-j)} -  \sum_{\substack{\mu +\varepsilon \text{ dominant} \\ \varepsilon \in \{0,1,2\}^{n-1} \\ \varepsilon \neq (0,\ldots,0)}} (-S^{(1)})^{r(\varepsilon)}  m(\lambda, \mu+\varepsilon).$$

By Lemma \ref{lemma_symplastcomp}, we see that the right side of the equation is given by $\prod_i S^{(x_i-y_i)}$, which completes the proof.
\end{proof}

With this we have given the proof for the multiplicity formula in the three cases, hence this completes the proof of branching rules in the symplectic case.

 \begin{rmk}
 \normalfont
One can arrive at the classical result of the multiplicity, which is seen as a number, by considering the dimension of the $SL_2$ representation $m(\lambda,\mu).$ It follows canonically that if $m(\lambda,\mu)=\prod_{i=1}^n S^{(x_i -y_i)}$, then $\dim m(\lambda,\mu) = \prod_{i=1}^n (x_i -y_i+1)$, which is the formula seen in the classical treatment of the subject.
\end{rmk}

\section{Branching rules for $(Spin(2n) , Spin(2n-1))$}
When $(G,H) = (Spin(2n),Spin(2n-1))$, we are not in an equal rank case. In this situation, we do not seem to have at hand a relative Weyl character formula and the foregoing method cannot be applied. 
The main idea of our proof is to relate the Weyl character formula for $G$ to that of $GL(n)$. The Weyl group of $Spin(2n)$ is a semi-direct product of the symmetric group $S_n$ with the group of `even sign changes' $(\mathbb{Z}/2\mathbb{Z})^{n-1}\subset (\mathbb{Z}/2\mathbb{Z})^{n}$. The starting point of our proof is to decompose the Weyl character of $Spin(2n)$ in terms of the Weyl character formula for $GL(n)$, indexed by the sign changes. We then use the branching formula for $(GL(n), GL(n-1))$ as an algebraic identity and stitch the resultant identities along the group $(\mathbb{Z}/2\mathbb{Z})^{n-1}$ to arrive at a proof of the branching formula in this case.

We recall the convention we had in Section \ref{section_branchingspinodd}:

Given a tuple $\eta= (\eta_1, \eta_2  \ldots, \eta_{n})$, we define

$$D^+(\eta)= \det |x_j^{\eta_i} + x_j^{-\eta_i}| \quad\text{ and } \quad D^-(\eta)= \det |x_j^{\eta_i} - x_j^{-\eta_i}| . $$

The dominant weights of $G$ are given by  $\lambda = (\lambda_1, \lambda_2,\ldots,\lambda_n)$, where
$\lambda_i $ are all integers or all half integers such that $\lambda_1 \geq \lambda_2 \geq \ldots \geq |\lambda_{n}|$. Without loss of generality, we may assume that $\lambda_n \geq 0$ (otherwise we can apply the outer automorphism of $G$, to make it non negative). We recall the Weyl character formula for $G$:

\begin{equation}
\bigchi_{\lambda} = \frac{\tfrac{1}{2}(D^-(\lambda + \rho_G) + D^+(\lambda + \rho_G))}{\tfrac{1}{2}D^+(\rho_G)},
\end{equation}
where $\rho_G= (n-1,n-2, \ldots,1,0)$ is the half sum of positive roots for $G$. (We put the $\tfrac{1}{2}$ in the numerator and denominator to keep the product formula for the Weyl denominator coherent.)

The Weyl denominator has a product expansion:

$$\frac{1}{2}D^+(\rho_G) = \frac{\displaystyle\prod_{1 \leq i<j \leq n} (x_i-x_j)(x_ix_j-1)}{(x_1 \ldots x_n)^{n-1}}.$$

We first express the numerator in the Weyl character formula of $Spin(2n)$ as a sum of $GL(n)-$type numerators:

\begin{proposition} \label{prop_detexp}
Given a tuple $\eta= (\eta_1, \eta_2  \ldots, \eta_{n})$, we have that

$$\frac{1}{2}(D^-(\eta) + D^+(\eta)) = \sum_{\substack{\varepsilon \in \{ \pm 1\}^n\\ (-1)^{\varepsilon} =1}} \det |x_j^{\varepsilon_i \eta_i}|, $$
where $(-1)^{\varepsilon}= \prod_i \varepsilon_i.$
\end{proposition}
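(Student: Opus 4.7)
The identity follows from a direct row-by-row expansion of each determinant via multilinearity. Every entry of $D^+(\eta)$ and $D^-(\eta)$ naturally splits as a two-term sum, and this is the key observation.

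The plan is as follows. First, for each $i$, I would write
\[
x_j^{\eta_i}+x_j^{-\eta_i}=\sum_{\varepsilon_i\in\{\pm 1\}}x_j^{\varepsilon_i\eta_i},\qquad
x_j^{\eta_i}-x_j^{-\eta_i}=\sum_{\varepsilon_i\in\{\pm 1\}}\varepsilon_i\,x_j^{\varepsilon_i\eta_i}.
\]
Then I would apply the multilinearity of the determinant along the $n$ rows (the $i$-th row of each matrix is the sum above indexed by $\varepsilon_i$), obtaining
\[
D^+(\eta)=\sum_{\varepsilon\in\{\pm 1\}^n}\det|x_j^{\varepsilon_i\eta_i}|,\qquad
D^-(\eta)=\sum_{\varepsilon\in\{\pm 1\}^n}\Bigl(\prod_{i=1}^n\varepsilon_i\Bigr)\det|x_j^{\varepsilon_i\eta_i}|=\sum_{\varepsilon}(-1)^{\varepsilon}\det|x_j^{\varepsilon_i\eta_i}|.
\]

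Adding these two identities, the coefficient of $\det|x_j^{\varepsilon_i\eta_i}|$ becomes $1+(-1)^{\varepsilon}$, which is $2$ when $(-1)^{\varepsilon}=1$ and $0$ otherwise. Dividing by $2$ yields
\[
\tfrac{1}{2}\bigl(D^+(\eta)+D^-(\eta)\bigr)=\sum_{\substack{\varepsilon\in\{\pm 1\}^n\\(-1)^{\varepsilon}=1}}\det|x_j^{\varepsilon_i\eta_i}|,
\]
as required.

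There is really no obstacle here: the statement is essentially a bookkeeping observation about splitting each row into a sum of two rows and noting that the sign coming out of the $i$-th row of $D^-$ is exactly $\varepsilon_i$. The only point worth emphasizing in the write-up is that multilinearity is being applied to all $n$ rows simultaneously, producing a sum over $\{\pm 1\}^n$, and that the parities of the sign changes align perfectly so that $D^+$ contributes $+1$ to every sign pattern while $D^-$ contributes $(-1)^{\varepsilon}$.
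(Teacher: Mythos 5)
Your proof is correct and follows essentially the same approach as the paper: apply multilinearity of the determinant to split each $D^\pm(\eta)$ into a sum over all sign patterns $\varepsilon \in \{\pm 1\}^n$, with $D^-$ picking up the extra factor $(-1)^\varepsilon$, then add and divide by $2$ so that only even sign patterns survive. (The paper phrases the expansion as being ``along the columns,'' but since $i$ indexes rows in $\det|x_j^{\eta_i}|$ the expansion is the same row-by-row one you carried out.)
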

\begin{proof}
 We use the multilinearity of the determinant and expand along the columns to get, 
\begin{align*}
&\det |x_j^{ \eta_i} +x_j^{ -\eta_i}| = \sum_{\varepsilon \in \{ \pm 1\}} \det |x_j^{\varepsilon_i \eta_i}| \quad\text{ and }\\
&\quad \det |x_j^{ \eta_i} -x_j^{ -\eta_i}| = \sum_{\varepsilon \in \{ \pm 1\}}(-1)^{\varepsilon} \det |x_j^{\varepsilon_i \eta_i}|.
\end{align*} 
 Adding the two equations  and dividing by 2 completes the proof of the lemma. 
\end{proof}

We recall (see \cite[p.378]{goodman1998}), that we can consider the maximal torus of $Spin(2n-1)$ as embedded inside the maximal torus of $Spin(2n)$ by letting $x_n=1$.  Using this specialization, we rewrite  the Weyl denominator of $Spin(2n)$ as follows: 
\begin{lemma}\label{lem_dnbnweyldeno}
 If $WD(G)$ (resp. $WD(H)$) is the weyl denominator of $G$ ( resp. $H$), then
 $$WD(G)|_{x_n=1} = WD(H) \prod_{i=1}^{n-1}(x_i^{1/2} - x_i^{-1/2}).$$
\end{lemma}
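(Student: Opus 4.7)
The plan is to prove this by a direct computation using the product formula for the Weyl denominator as a product over positive roots, and then separating out the roots that involve $L_n$ from the rest.

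First I would recall the product expansion
\[
WD(G) = \prod_{\alpha \in \Phi^+(D_n)}(e^{\alpha/2} - e^{-\alpha/2}),
\qquad
WD(H) = \prod_{\beta \in \Phi^+(B_{n-1})}(e^{\beta/2} - e^{-\beta/2}),
\]
and write each factor in terms of the coordinates $x_i = e^{L_i}$. The positive roots of $D_n$ split as a disjoint union
\[
\Phi^+(D_n) = \Phi^+(D_{n-1}) \sqcup \{L_i \pm L_n : 1 \leq i \leq n-1\},
\]
where $\Phi^+(D_{n-1})$ is the set of positive roots of $D_{n-1}$ not involving $L_n$. Similarly, $\Phi^+(B_{n-1}) = \Phi^+(D_{n-1}) \sqcup \{L_i : 1 \leq i \leq n-1\}$, so that
\[
WD(H) = WD(D_{n-1}) \cdot \prod_{i=1}^{n-1}(x_i^{1/2} - x_i^{-1/2}),
\]
where $WD(D_{n-1})$ denotes the product over $\Phi^+(D_{n-1})$.

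The key computation is to specialize $x_n = 1$ in the factors coming from the roots $L_i \pm L_n$. For each $i < n$, the pair of factors contributes
\[
(x_i^{1/2}x_n^{1/2} - x_i^{-1/2}x_n^{-1/2})(x_i^{1/2}x_n^{-1/2} - x_i^{-1/2}x_n^{1/2}),
\]
which at $x_n = 1$ becomes $(x_i^{1/2} - x_i^{-1/2})^2$. The part of $WD(G)$ coming from $\Phi^+(D_{n-1})$ is unaffected by the specialization since those roots do not involve $L_n$. Putting this together,
\[
WD(G)\big|_{x_n=1} = WD(D_{n-1}) \cdot \prod_{i=1}^{n-1}(x_i^{1/2} - x_i^{-1/2})^2.
\]

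Finally, I would factor one copy of $\prod_{i=1}^{n-1}(x_i^{1/2} - x_i^{-1/2})$ and recognize the remaining product $WD(D_{n-1}) \cdot \prod_{i=1}^{n-1}(x_i^{1/2} - x_i^{-1/2})$ as $WD(H)$, which yields the desired identity. There is no real obstacle here — the lemma amounts to bookkeeping with the explicit product formula, and the only subtlety is to make sure the half-integer exponents are handled consistently so that the two specialized factors coming from $L_i + L_n$ and $L_i - L_n$ genuinely multiply to $(x_i^{1/2} - x_i^{-1/2})^2$ (rather than picking up a sign).
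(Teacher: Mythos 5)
Your proof is correct and is essentially the paper's own argument: both rest on the product formula for the Weyl denominator and track what happens at $x_n=1$ to the factors involving $x_n$. The paper carries this out by manipulating the closed form $\prod_{1\le i<j\le n}(x_i-x_j)(x_ix_j-1)/(x_1\cdots x_n)^{n-1}$, where your two factors coming from $L_i\pm L_n$ appear as $(x_i-1)^2/x_i=(x_i^{1/2}-x_i^{-1/2})^2$, while you organize the identical computation via the decompositions $\Phi^+(D_n)=\Phi^+(D_{n-1})\sqcup\{L_i\pm L_n\}$ and $\Phi^+(B_{n-1})=\Phi^+(D_{n-1})\sqcup\{L_i\}$; the content is the same.
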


\begin{proof}
We manipulate the product formula for the Weyl denominator of $G$:
\begin{align*}
WD(G)|_{x_n=1} &= \displaystyle  \frac{\displaystyle\prod_{1\leq i <j \leq n-1} (x_i-x_j)(x_i x_j-1) \prod_{i=1}^{n-1}(x_i-1)^2}{(x_1x_2\ldots x_{n-1})^{n-1}}\\
&= \frac{\displaystyle\prod_{1\leq i <j \leq n-1} (x_i-x_j)(x_i x_j-1) }{(x_1x_2\ldots x_{n-1})^{n-2}}\prod_{i=1}^{n-1} \frac{(x_i-1)^2}{x_i}\\
&= \left(\frac{\displaystyle\prod_{1\leq i <j \leq n-1} (x_i-x_j)(x_i x_j-1) \prod_{i=1}^{n-1}(x_i^{1/2} - x_i^{-1/2})}{(x_1x_2\ldots x_{n-1})^{n-2}}\right) \left( \prod_{i=1}^{n-1}(x_i^{1/2} - x_i^{-1/2})\right)\\
 &=WD(H)\displaystyle \prod_{i=1}^{n-1}(x_i^{1/2} - x_i^{-1/2}).
\end{align*}
\end{proof}

We reformulate Theorem \ref{theorem:branching-gln} as a formal algebraic identity involving the variables $x_i$:

\begin{proposition}
Let $\lambda_i $ be all integers or all half integers such that $\lambda_1 \geq \lambda_2 \geq \ldots \geq \lambda_{n} \geq 0$. Then, 

\begin{equation}\label{branchingdetidentity}
 \left.\frac{\det|x_j^{\lambda_i + n-i}|}{\det|x_j^{ n-i}|} \right|_{x_n=1} = \sum_{\mu \preceq \lambda} \frac{\det|x_j^{\mu_i + n-i-1}|}{\det|x_j^{n-i-1}|},
\end{equation}
where $\mu \preceq \lambda$ refers to saying that $\mu$ interlaces $\lambda$ ($\lambda_1 \geq \mu_1 \geq \lambda_2\ \geq \cdots \geq \lambda_{n-1} \geq \mu_{n-1} \geq \lambda_n$). Note that the determinants on the left side have size $n \times n$ whereas those on the right side are of size $n-1 \times n-1$.  
\end{proposition}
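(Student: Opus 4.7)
The plan is to recognize equation~(\ref{branchingdetidentity}) as an algebraic restatement of the $GL(n)\to GL(n-1)\times GL(1)$ branching rule (Theorem~\ref{theorem:branching-gln}), specialized at the $GL(1)$-parameter $t=1$. By the Schur--Weyl character formula~(\ref{eq:weylchformulagldet}), the left-hand side of~(\ref{branchingdetidentity}) equals $\bigchi_\lambda(x_1,\ldots,x_{n-1},1)$, the character of the irreducible $GL(n)$-representation of highest weight $\lambda$ evaluated at the diagonal matrix with entries $x_1,\ldots,x_{n-1},1$. Likewise each summand on the right-hand side is $\bigchi_\mu^{GL(n-1)}(x_1,\ldots,x_{n-1})$.

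For the integer case, I would simply specialize $t=x_n=1$ in~(\ref{eqbrmu}) applied to $(G,H)=(GL(n),GL(n-1)\times GL(1))$; combined with the multiplicity computation of Theorem~\ref{theorem:branching-gln} this yields
\[
\bigchi_\lambda(x_1,\ldots,x_{n-1},1)=\sum_{\mu\preceq\lambda}\bigchi_\mu(x_1,\ldots,x_{n-1}),
\]
which is exactly~(\ref{branchingdetidentity}) when $\lambda\in\mathbb{Z}^n$.

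For the half-integer case, I would reduce to the integer case by pulling out a square root of the determinant character. Write $\lambda=\lambda^\circ+\tfrac{1}{2}(1,\ldots,1)$ with $\lambda^\circ$ a (dominant) integer weight, and similarly $\mu=\mu^\circ+\tfrac{1}{2}(1,\ldots,1)$ for each half-integer weight $\mu$. Factoring $x_j^{1/2}$ from the $j$th column of the LHS numerator gives
\[
\det|x_j^{\lambda_i+n-i}|=(x_1\cdots x_n)^{1/2}\det|x_j^{\lambda^\circ_i+n-i}|,
\]
while the LHS denominator is unchanged; an analogous factorization of $(x_1\cdots x_{n-1})^{1/2}$ applies to each term on the RHS. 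Since the interlacing condition $\mu\preceq\lambda$ transfers verbatim to $\mu^\circ\preceq\lambda^\circ$, after setting $x_n=1$ the common factor $(x_1\cdots x_{n-1})^{1/2}$ cancels from both sides and the identity reduces to the integer case for $\lambda^\circ$.

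Because Theorem~\ref{theorem:branching-gln} has already been established, this argument contains no serious obstacle; the only slightly delicate point is the parity bookkeeping for the half-integer case, and this is handled cleanly by factoring the $(x_1\cdots x_n)^{1/2}$ twist once and for all.
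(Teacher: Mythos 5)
Your proposal is correct and follows essentially the same route as the paper: the integer case is exactly the specialization of the $(GL(n),GL(n-1))$ branching rule (Theorem \ref{theorem:branching-gln}) at $x_n=1$, and the half-integer case is reduced to it by twisting with the half-power $(x_1\cdots x_{n-1})^{1/2}$ of the determinant character, just as in the paper's shift by $(\tfrac12,\ldots,\tfrac12)$. Your column-by-column factorization and the remark that interlacing is preserved under the shift simply spell out the paper's terser argument.
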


\begin{proof}
If $\lambda_i$ are all integers, then the above identity follows from Theorem \ref{theorem:branching-gln}.

If $\lambda$ has parts that are all half integers, we consider equation (\ref{branchingdetidentity}) for $\lambda' = \lambda - (\tfrac{1}{2},\tfrac{1}{2},\ldots,\tfrac{1}{2}) $. By multiplying both sides of the said identity by $(x_1x_2 \ldots x_{n-1})^{1/2}$, we can show that the above identity holds when $\lambda_i$ is half integral. In this case the right side of equation (\ref{branchingdetidentity}) is summed over all the $\mu$ that are half integers and interlace $\lambda$.
\end{proof}

\begin{corollary}
    \begin{equation}\label{branchingdetidentitymod}
 \frac{(\det|x_j^{\lambda_i + n-i}|)|_{x_n=1}}{\prod_{i=1}^{n-1} (x_i -1)} = \sum_{\mu \preceq \lambda} \det|x_j^{\mu_i + n-i-1}|.
\end{equation}
\end{corollary}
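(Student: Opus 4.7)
The plan is to clear denominators in the identity of the preceding proposition and simplify the resulting expression on the left using the product form of the Vandermonde determinant.

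First, I would record the explicit value of the Vandermonde determinant $\det|x_j^{n-i}|_{1\le i,j\le n}$ after the substitution $x_n = 1$. Using the standard formula
\[
\det|x_j^{n-i}|_{1\le i,j \le n} \;=\; \prod_{1\le i<j\le n}(x_i-x_j)
\]
(up to a fixed sign coming from the convention) and isolating the factors involving $x_n$, setting $x_n = 1$ gives
\[
\det|x_j^{n-i}|_{1\le i,j \le n}\big|_{x_n=1} \;=\; \Bigl(\prod_{1\le i<j\le n-1}(x_i-x_j)\Bigr)\prod_{i=1}^{n-1}(x_i-1) \;=\; \det|x_j^{n-i-1}|_{1\le i,j\le n-1} \cdot \prod_{i=1}^{n-1}(x_i-1).
\]

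Second, starting from the identity
\[
\left.\frac{\det|x_j^{\lambda_i + n-i}|}{\det|x_j^{ n-i}|} \right|_{x_n=1} = \sum_{\mu \preceq \lambda} \frac{\det|x_j^{\mu_i + n-i-1}|}{\det|x_j^{n-i-1}|},
\]
I would multiply both sides by the $(n-1)\times(n-1)$ Vandermonde $\det|x_j^{n-i-1}|$, which absorbs into each summand on the right to yield exactly $\sum_{\mu\preceq\lambda}\det|x_j^{\mu_i+n-i-1}|$. On the left, the product becomes
\[
\frac{\det|x_j^{\lambda_i+n-i}|\big|_{x_n=1}}{\det|x_j^{n-i}|\big|_{x_n=1}} \cdot \det|x_j^{n-i-1}|,
\]
and substituting the factorization from the first step collapses this to
\[
\frac{\det|x_j^{\lambda_i+n-i}|\big|_{x_n=1}}{\prod_{i=1}^{n-1}(x_i-1)}.
\]

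Equating the two sides gives the corollary. There is no real obstacle here: the only thing to be careful about is keeping the signs in the Vandermonde product consistent on both sides, and verifying that the cancellation between $\det|x_j^{n-i}|\big|_{x_n=1}$ and $\det|x_j^{n-i-1}|$ leaves precisely the product $\prod_{i=1}^{n-1}(x_i-1)$ in the denominator, which is immediate from the factorization recorded above.
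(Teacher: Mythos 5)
Your proposal is correct and follows exactly the paper's argument: the paper's own proof is just the observation that the denominators in the preceding proposition are Vandermonde determinants, so that cross-multiplying (using the factorization $\det|x_j^{n-i}|\,|_{x_n=1}=\det|x_j^{n-i-1}|\cdot\prod_{i=1}^{n-1}(x_i-1)$, which you spell out) yields the stated identity. You have simply made the cancellation explicit, which is fine.
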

\begin{proof}
    
The denominators in equation (\ref{branchingdetidentity}) are Vandermonde determinant.  Upon cross multiplying we get the above equality.

\end{proof}

\subsection*{Proof of Theorem \ref{thmbranchingdn}}

By Proposition \ref{prop_detexp} and Lemma \ref{lem_dnbnweyldeno}, we have 

\begin{equation}\label{restrictedweylsumdn}
\bigchi_{\lambda}|_{x_n=1} = \frac{1}{WD(H)}\sum_{\substack{\varepsilon \in \{ \pm 1\}^n\\ (-1)^{\varepsilon}=1}} \frac{\det |x_j^{\varepsilon_i (\lambda_i + n-i)}|_{x_n=1}}{\prod_{i=1}^{n-1}(x_i^{1/2} - x_i^{-1/2})}.
\end{equation}

We first observe that the following two abelian groups are isomorphic:
$$\{ \varepsilon \in \{ \pm 1\}^n \mid  (-1)^{\varepsilon}=1 \} \cong \{ \varepsilon' \in \{ \pm 1\}^{n-1} \}. $$
Given $\varepsilon = (\varepsilon_1,\ldots,\varepsilon_n) \in \{ \pm 1\}^n, (-1)^{\varepsilon}=1$ ,we have the injection $\varepsilon' = (\varepsilon_1,\ldots,\varepsilon_{n-1})$. 

To show that it is a surjection, for $\varepsilon' \in \{ \pm 1\}^{n-1}$, let $\varepsilon'_n = \prod_{i=1}^{n-1} \varepsilon'_i$. Then $\varepsilon = (\varepsilon'_1,\ldots,\varepsilon'_{n-1},\varepsilon'_n)\in \{ \pm 1\}^n$ and $(-1)^{\varepsilon} =1$. Our convention is that by $\varepsilon$ we mean an element on the left side of the above isomorphism, and $\varepsilon'$ refers to its isomorphic image on the right side.

For a fixed $\varepsilon \in \{ \pm 1\}^n, (-1)^{\varepsilon}=1$, let $y_i = x_i^{\varepsilon_i}$. Then, 
\begin{align}
 \prod_{i=1}^{n-1}(x_i^{1/2} - x_i^{-1/2}) &= \prod_{i=1}^{n-1}(y_i^{\varepsilon_i/2} - y_i^{-\varepsilon_i/2})\nonumber \\
 &= (-1)^{\varepsilon'} \prod_{i=1}^{n-1}(y_i^{1/2} - y_i^{-1/2}) \nonumber\\
 &= (-1)^{\varepsilon'} (y_1y_2\ldots y_{n-1})^{-1/2} \prod_{i=1}^{n-1}(y_i-1). \label{weyldenodn}
\end{align}
Using equation (\ref{branchingdetidentitymod}), we get that 
\begin{align}
 \frac{(\det |x_j^{\varepsilon_i (\lambda_i + n-i)}|)|_{x_n=1}}{\prod_{i=1}^{n-1}(x_i^{1/2} - x_i^{-1/2})} &= (-1)^{\varepsilon'} (y_1y_2\ldots y_{n-1})^{1/2}\frac{\det |y_j^{\lambda_i + n-i}|_{y_n=1}}{\prod_{i=1}^{n-1}(y_i-1)}\nonumber \\
 &= (-1)^{\varepsilon'} (y_1y_2\ldots y_{n-1})^{1/2}\sum_{\mu \preceq \lambda} \det|y_j^{\mu_i + n-i-1}| \nonumber \\
 &=(-1)^{\varepsilon'} \sum_{\mu \preceq \lambda} \det|y_j^{\mu_i + n-i-1/2}|\nonumber \\
 &= (-1)^{\varepsilon'} \sum_{\mu \preceq \lambda} \det|x_j^{\varepsilon_j(\mu_i + n-i-1/2)}|\nonumber \\
 &= (-1)^{\varepsilon'} \sum_{\mu \preceq \lambda} \det|x_j^{\varepsilon_j'(\mu_i + n-i-1/2)}|.
\end{align}

Combining this with equation (\ref{restrictedweylsumdn}), we get
\begin{align}
\bigchi_{\lambda}|_{x_n=1} &= \frac{1}{WD(H)} \sum_{\substack{\varepsilon \in \{ \pm 1\}^n\\ (-1)^{\varepsilon}=1}} \frac{\det |x_j^{\varepsilon_i (\lambda_i + n-i)}|_{x_n=1}}{\prod_{i=1}^{n-1}(x_i^{1/2} - x_i^{-1/2})} \nonumber\\
 &=\frac{1}{WD(H)} \sum_{\varepsilon' \in \{ \pm 1\}^{n-1}} (-1)^{\varepsilon'} \sum_{\mu \preceq \lambda} \det|x_j^{\varepsilon_j'(\mu_i + n-i-1/2)}|\nonumber\\
 &=\sum_{\mu \preceq \lambda} \sum_{\varepsilon' \in \{ \pm 1\}^{n-1}} (-1)^{\varepsilon'} \frac{\det|x_j^{\varepsilon_j'(\mu_i + n-i-1/2)}|}{WD(H)}\nonumber\\
 &= \sum_{\mu \preceq \lambda} \frac{\det|x_j^{(\mu_i + n-i-1/2)} - x_j^{-(\mu_i + n-i-1/2)}|}{WD(H)}\nonumber\\
 &= \sum_{\mu \preceq \lambda} \bigchi_{\mu}. 
\end{align}

This completes the proof of Theorem \ref{thmbranchingdn}.

\subsection*{\centering Acknowledgement}
The authors thank  Dibyendu Biswas, Sourav Ghosh, Sameer Kulkarni, Niladri Patra, Dipendra Prasad and  Manodeep Raha for their suggestions and many valuable discussions on the said topic of this paper. 

The authors especially thank Swarnava Mukhopadhayay for his constant support and suggestions. The second author expresses his gratitude to Ashoka University for the warm hospitality and  productive working atmosphere during their visit. The first author is indebted to TIFR for providing a great working atmosphere during his tenure there.

\bibliographystyle{alpha}
\bibliography{ref.bib}{}

\end{document}